\newtheorem{theorem}{Theorem}
\def\tbcaption{\def\@captype{table}\caption}
\newtheorem{corollary}{Corollary}
\newtheorem{remark}{Remark}
\DeclarePairedDelimiter\ceil{\lceil}{\rceil}
\DeclarePairedDelimiter\floor{\lfloor}{\rfloor}
\DeclarePairedDelimiterX{\cif}[1]{(}{)}{\delimsize(#1\delimsize)}
\begin{document}

\title{Metallic cubes}

\author{Tomislav Do\v{s}li\'c\\
    University of Zagreb Faculty of Civil Engineering,\\
    Zagreb, Croatia \\
         and \\
    Faculty of Information Studies, Novo Mesto, Slovenia\\
  \texttt{tomislav.doslic@grad.unizg.hr} \and
Luka Podrug\\
    University of Zagreb Faculty of Civil Engineering, \\
    Zagreb, Croatia\\
  \texttt{luka.podrug@grad.unizg.hr}}

\date{}
\maketitle

\begin{abstract}
We study a recursively defined two-parameter family of graphs which generalize
Fibonacci cubes and Pell graphs and determine their basic structural
and enumerative properties. In particular, we show that all of them are
induced subgraphs of hypercubes and present their canonical decompositions.
Further, we compute their metric invariants and establish some Hamiltonicity
properties.  We show that the new family inherits many useful properties of
Fibonacci cubes and hence could be interesting for potential applications. We
also compute the degree distribution, opening thus the way for computing many
degree-based topological invariants. Several possible directions of further
research are discussed in the concluding section.
\end{abstract}

\section{Introduction}\label{intr}

The hypercubes $Q_n$ are one of the best known and most researched graph
families. In spite of their simple definition - $Q_n$ it the $n$-the
Cartesian power of $K_2$ - they have rich internal structure and pose many
non-trivial research problems. In recent years and decades they have been
studied, among other reasons, for their potential uses as processor
interconnection networks of low diameter. This advantage is, however, often
offset by rigid restrictions on the number of nodes (always a power of two),
prompting thus interest in more flexible architectures along the same
general lines. Two variations, the Fibonacci and the Lucas cubes, turned
out to be promising, giving rise to a large number of papers studying their
various properties. As it could be guessed from their names, the number of
vertices in such structures grows slower, with the $n$-th power of the
Golden Ratio, hence as $\left ( \frac{1+\sqrt {5}}{2} \right )^n \approx
1.618^n$, offering more flexibility while preserving desirable properties of
hypercubes \cite{hsu1}. Hence the number of vertices satisfies the simplest
non-trivial three-term linear recurrence $v_n = v_{n-1} + v_{n-2}$ for
$n \geq 3$ with the initial conditions $v_0 = 2, v_2 = 3$.

Both hypercubes and Fibonacci cubes have natural representations as graphs
whose vertices consist of binary strings, subject to some additional 
restrictions in the case of Fibonacci cubes, two vertices being adjacent 
if and only if the Hamming distance between the corresponding strings is
equal to one. Fibonacci cubes have attracted much attention and spawned a
number of generalizations, most of them leading to graphs with the number 
of vertices counted by various types of higher-order Fibonacci numbers
\cite{hsu2}.
In a recent paper by Munarini \cite{Munarini}, the author considered a
generalization of different type, extending the alphabet over which the 
strings are constructed and imposing some restrictions on the new element
in the alphabet. (Previous generalizations were mostly concerned with
binary strings with various restrictions on adjacencies of ones.) It turned
out that the new graphs exhibit many similarities with Fibonacci cubes. In
particular, they share the property of being partial cubes, hence isometric
subgraphs of hypercubes. Also, the number of vertices in his graphs
satisfies a three-term linear recurrence with constant coefficients. More
precisely, it is given by the Pell numbers, the sequence of nonnegative
integers satisfying the three-term linear recurrence 
$P_n = 2 P_{n-1} + P_{n-2}$, for $n \geq 2$, with the initial conditions
$P_0 = 1, P_1 = 2$. Hence he named the new graphs the {\em Pell graphs}.

In this paper we take further the line of research started by Munarini in
\cite{Munarini} and ask whether graphs with desirable properties of
Fibonacci cubes and Pell graphs could be obtained by further extending the
alphabet underlying the strings serving as their vertices. We found that the
answer is positive. In the rest of this paper we describe one possible
generalization, the most natural in our opinion, a recursively defined
family of graphs which shares many structural properties with previously
considered partial cubes. Our graphs are parameterized by two nonnegative
integer parameters, one, $a$, denoting the alphabet size, the other, $n$,
denoting the step in the construction. As the number of vertices of the
graph obtained at the $n$-th step of a construction for a fixed value of
$a$ satisfies the three-term linear recurrence $v_n = a v_{n-1} + v_{n-2}$,
we call the resulting graphs the {\em metallic cubes}. We find the name
informative, reflecting both their partial cube nature and the asymptotic
behavior of the number of vertices, expressed via roots of the
characteristic equation $x^2 -a x -1 =0$, known as {\em metallic means}.

In the next section we give necessary definitions and references. Section 3
is concerned with basic structural properties of metallic cubes. The results
are then used in Section 4 to compute the number of edges and the degree 
distribution. Section 5 deals with metric properties, i.e., with diameter,
radius, center and periphery of metallic cubes. In section 6 we address and
partially answer their Hamiltonicity properties. The paper is concluded by
a short section summarizing our findings and indicating some possibilities
for further research.

\section{Definitions and illustrations}

The hypercube $Q_n$ is defined as follows: The set of vertices $V(Q_n)$
consists of all binary strings of length $n$ and two vertices are adjacent if
they differ in a single bit, i.e., if one vertex can be obtained from the
other by replacing $0$ with $1$ or vice versa, only once. The Fibonacci cubes
$\Gamma_n$ are special subgraphs of hypercubes where the vertices are the
Fibonacci strings $\mathcal{F}_n$, defined as binary strings of length $n$
without consecutive ones. The number of vertices in Fibonacci cubes is 
counted by the Fibonacci numbers, $V(\Gamma_n)=F_{n+2}$. The Pell graphs
$\Pi_n^{a}$, introduced by Munarini \cite{Munarini} in 2019, are graphs whose
vertices are strings of length $n$ over the alphabet
$\left\lbrace 0,1,2\right\rbrace$ with property that $2$ comes only in
blocks of even length. Two vertices are adjacent if one can be obtained from
the  other by replacing $0$ with $1$ or by replacing block $11$ with block
$22$. The vertices in this family are counted by the Pell numbers. The
Munarini's paper inspired us to try to generalize this idea by obtaining
family of graphs whose numbers of vertices satisfy recursive relation
$s_n=a\cdot s_{n-1}+s_{n-2}$ for arbitrary non-negative integer $a$ while
preserving main properties of Pell graphs. 

Let $a$ be a non-negative integer and let $\mathcal{S}_{a}$ denote the free
monoid containing $a+1$ generators
$\left\lbrace 0,1,2,\dots,a-1,0a\right\rbrace$. By a \textit{string} we mean
an element of monoid $\mathcal{S}_{a}$, i.e., a word from alphabet
$\left\lbrace 0,1,2,\dots,a-1,a\right\rbrace$ with property that letter
$a$ can only appear in block $0a$. Other letters can appear arbitrary. For
strings $\alpha=x_1\cdots x_n$ and $\beta=y_1\cdots y_m$ we define their
concatenation in the usual way, $\alpha\beta=x_1\cdots x_n y_1\cdots y_m$. 

If $\mathcal{S}^{a}_n$ denotes the set of all elements from the monoid
$\mathcal{T}_{a}$ of length $n$, one can easily obtain recursive relation
for the cardinal number $s^{a}_n=\left|\mathcal{S}^{a}_n\right|$. A string
of length $n$ can end with any of letters $0,1,\dots,a-1$ and the rest of
the string can be formed in $s_{n-1}^a$ ways. If a string ends with the 
letter $a$, that necessarily means that there is at least one zero in front
of that $a$, and the rest of the string can be formed in $s_{n-2}^a$ ways.
Hence, $s_n^a$ satisfies the recursive relation \begin{equation}
s^{a}_{n}=as^{a}_{n-1}+s^{a}_{n-2} \label{Metallic_recursion}
\end{equation}
with initial values $s^{a}_0=1$ and $s^{a}_1=a$. 

Now let $\Pi_n^{a}$ denote a graph whose vertices are elements of the set
$\mathcal{S}^a_n$, i.e., $ V\left(\Pi_n^{a}\right)=\mathcal{S}^{a}_n$ and
for any $v_1,v_2\in V\left(\Pi_n^{a,b}\right)$ we have
$\left(v_1, v_2\right)\in E\left(\Pi_n^{a,b}\right)$  if and only if one
vertex can be obtained from the other by replacing a single letter $k$ with
$k+1$ for $0\leq k\leq a-1$. An alternative definition of adjacency can be
given via modified Hamming distance. For $\alpha=\alpha_1\cdots\alpha_n$
and $\beta=\beta_1\cdots\beta_n$ we define
$$\overline{h}(\alpha,\beta)=\sum\limits_{k=1}^n|\alpha_k-\beta_k|.$$
Then $\alpha$ and $\beta$ are adjacent if and only if
$\overline{h}(\alpha,\beta)=1$. As an example, Figure \ref{fig:Pi_3^{3}}
shows graphs $\Pi^a_n$ for $a=3$ and $n = 1,2,3$.

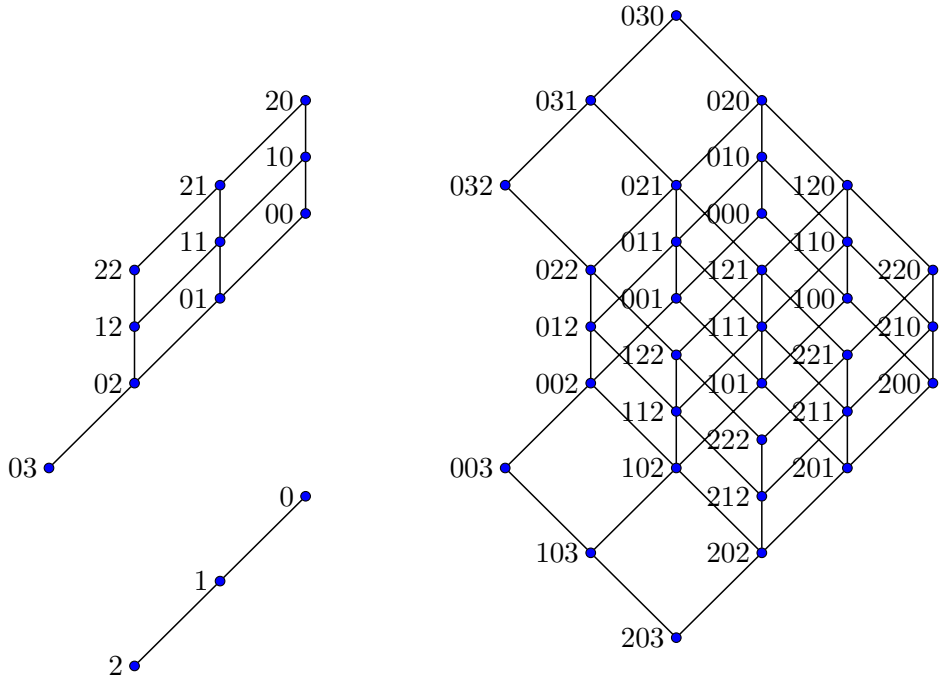
\begin{figure}[h!] \centering
\begin{tikzpicture}[scale=0.75]
\tikzmath{\x1 = 0.35; \y1 =-0.05; \z1=180; \w1=0.2; \xs=-8; \ys=0; \yss=-5;
\x2 = \x1 + 1; \y2 =\y1 +3; } 
\small
\node [label={[label distance=\y1 cm]\z1: $000$},circle,fill=blue,draw=black,scale=\x1](A1) at (0,4) {};
\node [label={[label distance=\y1 cm]\z1: $010$},circle,fill=blue,draw=black,scale=\x1](A2) at (0,5) {};
\node [label={[label distance=\y1 cm]\z1: $020$},circle,fill=blue,draw=black,scale=\x1](A3) at (0,6) {};
\node [label={[label distance=\y1 cm]\z1: $001$},circle,fill=blue,draw=black,scale=\x1](A4) at (-1.5,2.5) {};
\node [label={[label distance=\y1 cm]\z1: $011$},circle,fill=blue,draw=black,scale=\x1](A5) at (-1.5,3.5) {};
\node [label={[label distance=\y1 cm]\z1: $021$},circle,fill=blue,draw=black,scale=\x1](A6) at (-1.5,4.5) {};
\node [label={[label distance=\y1 cm]\z1: $002$},circle,fill=blue,draw=black,scale=\x1](A7) at (-3,1) {};
\node [label={[label distance=\y1 cm]\z1: $012$},circle,fill=blue,draw=black,scale=\x1](A8) at (-3,2) {};
\node [label={[label distance=\y1 cm]\z1: $022$},circle,fill=blue,draw=black,scale=\x1](A9) at (-3,3) {};
\node [label={[label distance=\y1 cm]\z1: $100$},circle,fill=blue,draw=black,scale=\x1](A10) at (1.5,2.5) {};
\node [label={[label distance=\y1 cm]\z1: $110$},circle,fill=blue,draw=black,scale=\x1](A11) at (1.5,3.5) {};
\node [label={[label distance=\y1 cm]\z1: $120$},circle,fill=blue,draw=black,scale=\x1](A12) at (1.5,4.5) {};
\node [label={[label distance=\y1 cm]\z1: $101$},circle,fill=blue,draw=black,scale=\x1](A13) at (0,1) {};
\node [label={[label distance=\y1 cm]\z1: $111$},circle,fill=blue,draw=black,scale=\x1](A14) at (0,2) {};
\node [label={[label distance=\y1 cm]\z1: $121$},circle,fill=blue,draw=black,scale=\x1](A15) at (0,3) {};
\node [label={[label distance=\y1 cm]\z1: $102$},circle,fill=blue,draw=black,scale=\x1](A16) at (-1.5,-0.5) {};
\node [label={[label distance=\y1 cm]\z1: $112$},circle,fill=blue,draw=black,scale=\x1](A17) at (-1.5,0.5) {};
\node [label={[label distance=\y1 cm]\z1: $122$},circle,fill=blue,draw=black,scale=\x1](A18) at (-1.5,1.5) {};
\node [label={[label distance=\y1 cm]\z1: $200$},circle,fill=blue,draw=black,scale=\x1](A19) at (3,1) {};
\node [label={[label distance=\y1 cm]\z1: $210$},circle,fill=blue,draw=black,scale=\x1](A20) at (3,2) {};
\node [label={[label distance=\y1 cm]\z1: $220$},circle,fill=blue,draw=black,scale=\x1](A21) at (3,3) {};
\node [label={[label distance=\y1 cm]\z1: $201$},circle,fill=blue,draw=black,scale=\x1](A22) at (1.5,-0.5) {};
\node [label={[label distance=\y1 cm]\z1: $211$},circle,fill=blue,draw=black,scale=\x1](A23) at (1.5,0.5) {};
\node [label={[label distance=\y1 cm]\z1: $221$},circle,fill=blue,draw=black,scale=\x1](A24) at (1.5,1.5) {};
\node [label={[label distance=\y1 cm]\z1: $202$},circle,fill=blue,draw=black,scale=\x1](A25) at (0,-2) {};
\node [label={[label distance=\y1 cm]\z1: $212$},circle,fill=blue,draw=black,scale=\x1](A26) at (0,-1) {};
\node [label={[label distance=\y1 cm]\z1: $222$},circle,fill=blue,draw=black,scale=\x1](A27) at (0,0) {};
\node [label={[label distance=\y1 cm]\z1: $003$},circle,fill=blue,draw=black,scale=\x1](A28) at (-4.5,-0.5) {};
\node [label={[label distance=\y1 cm]\z1: $103$},circle,fill=blue,draw=black,scale=\x1](A29) at (-3,-2) {};
\node [label={[label distance=\y1 cm]\z1: $203$},circle,fill=blue,draw=black,scale=\x1](A30) at (-1.5,-3.5) {};
\node [label={[label distance=\y1 cm]\z1: $030$},circle,fill=blue,draw=black,scale=\x1](A31) at (-1.5,7.5) {};
\node [label={[label distance=\y1 cm]\z1: $031$},circle,fill=blue,draw=black,scale=\x1](A32) at (-3,6) {};
\node [label={[label distance=\y1 cm]\z1: $032$},circle,fill=blue,draw=black,scale=\x1](A33) at (-4.5,4.5) {};

\draw [line width=\w1 mm] (A1)--(A2)--(A3) (A4)--(A5)--(A6) (A7)--(A8)--(A9) (A10)--(A11)--(A12) (A13)--(A14)--(A15) (A16)--(A17)--(A18) (A19)--(A20)--(A21) (A22)--(A23)--(A24) (A25)--(A26)--(A27) (A1)--(A4)--(A7)--(A28) (A2)--(A5)--(A8) (A3)--(A6)--(A9) (A10)--(A13)--(A16)--(A29) (A11)--(A14)--(A17) (A12)--(A15)--(A18) (A19)--(A22)--(A25)--(A30) (A20)--(A23)--(A26) (A21)--(A24)--(A27) (A1)--(A10)--(A19) (A2)--(A11)--(A20) (A31)--(A3)--(A12)--(A21) (A4)--(A13)--(A22) (A5)--(A14)--(A23) (A32)--(A6)--(A15)--(A24) (A7)--(A16)--(A25) (A8)--(A17)--(A26) (A33)--(A9)--(A18)--(A27)  (A28)--(A29)--(A30) (A31)--(A32)--(A33);

\node [label={[label distance=\y1 cm]\z1: $00$},circle,fill=blue,draw=black,scale=\x1](B1) at (0+\xs,4+\ys) {};
\node [label={[label distance=\y1 cm]\z1: $10$},circle,fill=blue,draw=black,scale=\x1](B2) at (0+\xs,5+\ys) {};
\node [label={[label distance=\y1 cm]\z1: $20$},circle,fill=blue,draw=black,scale=\x1](B3) at (0+\xs,6+\ys) {};
\node [label={[label distance=\y1 cm]\z1: $01$},circle,fill=blue,draw=black,scale=\x1](B4) at (-1.5+\xs,2.5+\ys) {};
\node [label={[label distance=\y1 cm]\z1: $11$},circle,fill=blue,draw=black,scale=\x1](B5) at (-1.5+\xs,3.5+\ys) {};
\node [label={[label distance=\y1 cm]\z1: $21$},circle,fill=blue,draw=black,scale=\x1](B6) at (-1.5+\xs,4.5+\ys) {};
\node [label={[label distance=\y1 cm]\z1: $02$},circle,fill=blue,draw=black,scale=\x1](B7) at (-3+\xs,1+\ys) {};
\node [label={[label distance=\y1 cm]\z1: $12$},circle,fill=blue,draw=black,scale=\x1](B8) at (-3+\xs,2+\ys) {};
\node [label={[label distance=\y1 cm]\z1: $22$},circle,fill=blue,draw=black,scale=\x1](B9) at (-3+\xs,3+\ys) {};
\node [label={[label distance=\y1 cm]\z1: $03$},circle,fill=blue,draw=black,scale=\x1](B28) at (-4.5+\xs,-0.5+\ys) {};

\draw [line width=\w1 mm] (B1)--(B2)--(B3) (B4)--(B5)--(B6) (B7)--(B8)--(B9)  (B1)--(B4)--(B7)--(B28) (B2)--(B5)--(B8) (B3)--(B6)--(B9); 

\node [label={[label distance=\y1 cm]\z1: $0$},circle,fill=blue,draw=black,scale=\x1](C1) at (0+\xs,4+\yss) {};
\node [label={[label distance=\y1 cm]\z1: $1$},circle,fill=blue,draw=black,scale=\x1](C4) at (-1.5+\xs,2.5+\yss) {};
\node [label={[label distance=\y1 cm]\z1: $2$},circle,fill=blue,draw=black,scale=\x1](C7) at (-3+\xs,1+\yss) {};

\draw [line width=\w1 mm] (C1)--(C4)--(C7); 

\end{tikzpicture} 
\caption{Graph $\Pi_n^{3}$ for $n=1$ (lower left), $2$ (upper left), $3$ (right).} \label{fig:Pi_3^{3}}
\end{figure}

From the definition of family $\Pi^a_n$ it is immediately clear that
$\Pi^1_n=\Gamma_{n-1}$ but $\Pi^2_n$ are not isomorphic to Pell graphs.
One possible generalization could be obtained using alphabet
$\left\lbrace 0,1,\dots,a\right\rbrace$ where two vertices are adjacent if
one can be obtained from the other by replacing single letter $i$ with $i+1$
for $\leq i\leq a-2$ or by replacing a block $(a-1)(a-1)$ with $aa$. This
family of graphs would be a straightforward generalization of Pell graphs as 
defined by Munarini, and would inherit many of their properties. Instead of
pursuing this approach we opted for a generalization better suited to higher
values of $a$ yielding along the way an alternative definition of Pell graphs
and offering numerous possibilities for comparisons. The difference is 
illustrated in Figure \ref{fig:comparison}, where it can be seen that,
although similar, the graphs obtained under two generalizations are not
isomorphic, for vertex $111$ of Pell graph shown in the left panel of
Figure \ref{fig:comparison} has degree $5$, while the maximum degree in
$\Pi^2_3$ is $4$. 

In Table \ref{table:number of vertices} we show the number of vertices for
few initial values of $a$ and $n$. The Pell number appear as the second row,
while the (shifted) Fibonacci numbers appear in the first one.
It can be observed that the rows of
Table \ref{table:number of vertices} grow exponentially, as the $n$-th
powers of the corresponding metallic means, while the elements in columns
grow polynomially. In fact, since the number of vertices of $\Pi^a_n$ is
equal to $$|V(\Pi^a_n)|=\sum\limits_{k\geq 0} \binom{n-k}{k}a^{n-2k},$$
for a fixed $n$, we obtain that growth of the number of vertices is polynomial
with degree $n$.

\begin{table}\centering$\begin{array}{r|cccccccc}
_a \backslash ^n & 1 & 2 & 3 & 4 & 5 & 6 & 7 & 8\\
\hline
1 & 1 & 2 & 3 & 5 & 8 & 13 & 21 & 34\\
2 & 2 & 5 & 12& 29& 70& 169& 408&985\\
3 & 3 & 10 & 33& 109& 360& 1189& 3927&12970\\
4 & 4 & 17 & 72& 305& 1292& 5473& 23184&98209\\
5 & 5 & 26 & 135& 701& 3640& 18901& 98145&509626\\
6 & 6 & 37 & 228& 1405& 8658& 53353& 328776&2026009
\end{array}$\label{table:number of vertices}\caption{Number of vertices in $\Pi^a_n$.}
\end{table}

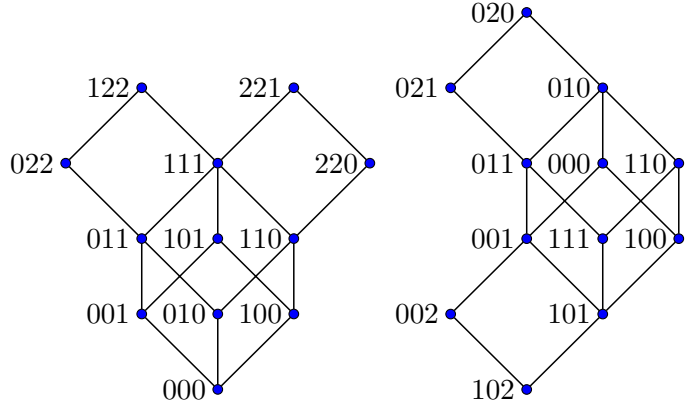
\begin{figure}\centering
\begin{tikzpicture}[scale=1]
\tikzmath{\x1 = 0.35; \y1 =-0.05; \z1=180; \w1=0.2; \xs=-8; \ys=0; \yss=-5;
\x2 = \x1 + 1; \y2 =\y1 +3; } 
\small
\node [label={[label distance=\y1 cm]\z1: $000$},circle,fill=blue,draw=black,scale=\x1](A1) at (1,0) {};
\node [label={[label distance=\y1 cm]\z1: $001$},circle,fill=blue,draw=black,scale=\x1](A2) at (0,1) {};
\node [label={[label distance=\y1 cm]\z1: $101$},circle,fill=blue,draw=black,scale=\x1](A3) at (1,2) {};
\node [label={[label distance=\y1 cm]\z1: $100$},circle,fill=blue,draw=black,scale=\x1](A4) at (2,1) {};
\node [label={[label distance=\y1 cm]\z1: $010$},circle,fill=blue,draw=black,scale=\x1](A5) at (1,1) {};
\node [label={[label distance=\y1 cm]\z1: $011$},circle,fill=blue,draw=black,scale=\x1](A6) at (0,2) {};
\node [label={[label distance=\y1 cm]\z1: $111$},circle,fill=blue,draw=black,scale=\x1](A7) at (1,3) {};
\node [label={[label distance=\y1 cm]\z1: $110$},circle,fill=blue,draw=black,scale=\x1](A8) at (2,2) {};
\node [label={[label distance=\y1 cm]\z1: $022$},circle,fill=blue,draw=black,scale=\x1](A9) at (-1,3) {};
\node [label={[label distance=\y1 cm]\z1: $122$},circle,fill=blue,draw=black,scale=\x1](A10) at (0,4) {};
\node [label={[label distance=\y1 cm]\z1: $221$},circle,fill=blue,draw=black,scale=\x1](A11) at (2,4) {};
\node [label={[label distance=\y1 cm]\z1: $220$},circle,fill=blue,draw=black,scale=\x1](A12) at (3,3) {};

\draw [line width=\w1 mm] (A1)--(A2)--(A3)--(A4)--(A1)--(A5)--(A6)--(A7)--(A8)--(A5) (A2)--(A6) (A3)--(A7) (A4)--(A8)  (A6)--(A9)--(A10)--(A7)--(A11)--(A12)--(A8);

\end{tikzpicture} \begin{tikzpicture}[scale=1]
\tikzmath{\x1 = 0.35; \y1 =-0.05; \z1=180; \w1=0.2; \xs=-8; \ys=0; \yss=-5;
\x2 = \x1 + 1; \y2 =\y1 +3; } 
\small
\node [label={[label distance=\y1 cm]\z1: $101$},circle,fill=blue,draw=black,scale=\x1](A1) at (1,0) {};
\node [label={[label distance=\y1 cm]\z1: $111$},circle,fill=blue,draw=black,scale=\x1](A2) at (1,1) {};
\node [label={[label distance=\y1 cm]\z1: $001$},circle,fill=blue,draw=black,scale=\x1](A3) at (0,1) {};
\node [label={[label distance=\y1 cm]\z1: $011$},circle,fill=blue,draw=black,scale=\x1](A4) at (0,2) {};
\node [label={[label distance=\y1 cm]\z1: $002$},circle,fill=blue,draw=black,scale=\x1](A5) at (-1,0) {};
\node [label={[label distance=\y1 cm]\z1: $100$},circle,fill=blue,draw=black,scale=\x1](A6) at (2,1) {};
\node [label={[label distance=\y1 cm]\z1: $110$},circle,fill=blue,draw=black,scale=\x1](A7) at (2,2) {};
\node [label={[label distance=\y1 cm]\z1: $000$},circle,fill=blue,draw=black,scale=\x1](A8) at (1,2) {};
\node [label={[label distance=\y1 cm]\z1: $010$},circle,fill=blue,draw=black,scale=\x1](A9) at (1,3) {};
\node [label={[label distance=\y1 cm]\z1: $102$},circle,fill=blue,draw=black,scale=\x1](A10) at (0,-1) {};
\node [label={[label distance=\y1 cm]\z1: $020$},circle,fill=blue,draw=black,scale=\x1](A11) at (0,4) {};
\node [label={[label distance=\y1 cm]\z1: $021$},circle,fill=blue,draw=black,scale=\x1](A12) at (-1,3) {};

\draw [line width=\w1 mm] (A4)--(A12)--(A11)--(A9)--(A8)--(A6)--(A7)--(A2)--(A1)--(A10)--(A5)--(A3)--(A4) (A4)--(A9)--(A7) (A2)--(A4) (A6)--(A1)--(A3)--(A8); 
\end{tikzpicture}  
\caption{The Pell graph $\Pi_3$ (left) and the corresponding metallic
cube $\Pi^2_3$ (right).} \label{fig:comparison}
\end{figure}   

\section{Basic structural properties}
\subsection{Canonical decompositions and bipartivity}
It is well known that Fibonacci cubes and Pell graphs admit recursive
decomposition \cite{Munarini}. In this section we show that such decompositions
naturally extend to metallic cubes.

We start with the observation that the set of vertices $\mathcal{S}_n^a$
can be divided into disjoint sets based of the starting letter. One set 
contains the vertices starting with block $0a$, and the remaining $a$ sets
contain the vertices that start with $0, 1,\dots,a-2$ and $a-1$, respectively.
Assuming $\alpha\in \mathcal{S}^a_{n-1}$ and $\beta\in \mathcal{S}^a_{n-2}$,
the vertices $0\alpha$, $1\alpha$, $\dots$, $(a-1)\alpha$ generate $a$ copies
of a graph $\Pi^a_{n-1}$ and vertices $0a\beta$ generate one copy of
$\Pi^a_{n-2}$. That brings us to our first result.

\begin{theorem}\label{candec} Metallic cube $\Pi^a_n$ admits the decomposition
\begin{equation*}
 \Pi^a_n=\Pi^a_{n-1}\oplus\cdots\oplus\Pi^a_{n-1}\oplus\Pi^a_{n-2}
\end{equation*} 
in which the first factor $\Pi^a_{n-1}$ is repeated $a$ times.
\end{theorem}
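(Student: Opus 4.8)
The plan is to make the informal discussion preceding the statement precise by exhibiting the claimed decomposition as an explicit partition of the vertex set together with a verification that the induced edges behave as required. First I would partition $\mathcal{S}_n^a$ according to the first letter of a string of length $n$. A string either begins with one of the letters $0,1,\dots,a-1$ — giving $a$ classes $X_k = \{k\alpha : \alpha \in \mathcal{S}_{n-1}^a\}$ for $0 \le k \le a-1$ — or it begins with the letter $a$; but by the defining restriction the letter $a$ occurs only inside the block $0a$, so a string cannot start with $a$. This seems to leave only $a$ classes, which is the point that needs care: the block $0a$ starting in position $1$ must be treated as a single unit. So I would instead split the class $X_0$ of strings starting with $0$ into those whose second letter is $a$ (i.e. strings of the form $0a\beta$ with $\beta \in \mathcal{S}_{n-2}^a$) and those whose second letter is not $a$. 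The cleaner bookkeeping is: the classes are $Y = \{0a\beta : \beta \in \mathcal{S}_{n-2}^a\}$ and, for $0 \le k \le a-1$, $Z_k = \{k\gamma : \gamma \in \mathcal{S}_{n-1}^a \text{ and } k\gamma \notin Y\}$. One checks that for $k \ge 1$ the condition $k\gamma \notin Y$ is automatic, while for $k=0$ it excludes exactly the strings $0a\beta$; and crucially, if $0\gamma \notin Y$ then $\gamma$ itself does not begin with $a$ — but more importantly $\gamma$ ranges over \emph{all} of $\mathcal{S}_{n-1}^a$, because the restriction "$a$ appears only in block $0a$" applied to $0\gamma$ is equivalent to the same restriction on $\gamma$ once we know the leading $0$ is not followed by $a$. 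Hence each $Z_k$ is in bijection with $\mathcal{S}_{n-1}^a$ via deletion of the first letter, and $Y$ is in bijection with $\mathcal{S}_{n-2}^a$ via deletion of the first two letters.

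Next I would check that these bijections are graph isomorphisms onto the respective induced subgraphs. Adjacency in $\Pi_n^a$ is governed by the modified Hamming distance $\overline{h}$ being $1$, i.e. changing a single letter $j$ to $j+1$ somewhere with $0 \le j \le a-1$. Deleting a fixed common prefix does not change $\overline{h}$ of the remaining suffixes, so the induced subgraph on $Z_k$ is isomorphic to $\Pi_{n-1}^a$ and the induced subgraph on $Y$ is isomorphic to $\Pi_{n-2}^a$. The one subtlety is to confirm that within $Y$ no legal move touches the leading $0a$ block: changing the leading $0$ to $1$ would produce a string $1a\beta$ in which $a$ is not in a block $0a$, hence not a vertex at all; and the letter $a$ cannot be incremented since $a$ is the top letter. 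So all edges inside $Y$ come from moves strictly inside $\beta$, as needed. This gives the vertex-disjoint union $\Pi_{n-1}^a \oplus \cdots \oplus \Pi_{n-1}^a \oplus \Pi_{n-2}^a$ as a \emph{spanning subgraph}; the content of the theorem (reading $\oplus$ as the operation Munarini uses for such canonical decompositions) is that this is in fact the whole graph, i.e. that there are no edges between distinct classes. That requires: (i) no edge between $Z_k$ and $Z_{k'}$ for $k \ne k'$, and (ii) no edge between any $Z_k$ and $Y$.

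For (i), an edge between $k\gamma$ and $k'\gamma'$ with $k<k'$ would need $\overline{h}(k\gamma,k'\gamma')=1$; since $|k-k'|\ge 1$ already, we must have $k'=k+1$ and $\gamma=\gamma'$, so the edge is $k\gamma \sim (k+1)\gamma$. This is a genuine edge of $\Pi_n^a$, so strictly speaking the classes $Z_0,\dots,Z_{a-1}$ are \emph{not} mutually non-adjacent — which tells me the intended reading of the theorem is not a disjoint union of all $a+1$ pieces at the same level but rather the recursive/hierarchical decomposition in the style of \cite{Munarini}: the $a$ copies of $\Pi_{n-1}^a$ are stacked in a path-like fashion (consecutive copies joined by a perfect matching along the move $k\gamma \sim (k+1)\gamma$), and the $\Pi_{n-2}^a$ copy is attached to the first copy. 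So the proof's final step is to identify precisely these inter-class edges: between $Z_k$ and $Z_{k+1}$ exactly the perfect matching $\{k\gamma \sim (k+1)\gamma\}$, and between $Z_0$ and $Y$ exactly the edges $01\delta \sim 0a\beta$ forced by... wait, here the block constraint intervenes again, which is the delicate point. For (ii): an edge from $0\gamma \in Z_0$ to $0a\beta \in Y$ needs $\overline{h}(0\gamma, 0a\beta) = 1$, i.e. $\gamma$ and $a\beta$ differ in exactly one letter by a unit increment; since $a\beta$ starts with $a$ and $\gamma$ does not start with $a$ (as $0\gamma \notin Y$), and $a$ is maximal, the only way is $\gamma = (a-1)\beta'$... but then the remaining letters of $\gamma$ must equal $\beta$, so $\gamma = (a-1)\beta$ and the move is incrementing position $2$ from $a-1$ to $a$ — and one must check $0(a-1)\beta$ is indeed a vertex (it is) and $0a\beta$ is a vertex (it is), so this edge is present. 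Hence $Z_0$ meets $Y$ in the matching $\{0(a-1)\beta \sim 0a\beta : \beta \in \mathcal{S}_{n-2}^a\}$, and no $Z_k$ with $k \ge 1$ meets $Y$ at all. I expect this last analysis — correctly handling the block constraint to pin down exactly which cross-edges survive, and matching it against whatever formal meaning of $\oplus$ the paper adopts — to be the main obstacle; the rest is routine prefix-stripping.
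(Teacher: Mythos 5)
Your proposal is correct and follows essentially the same route as the paper, which proves the theorem only via the informal partition of $\mathcal{S}^a_n$ by leading letter (or leading $0a$-block) in the paragraph preceding the statement. Your additional bookkeeping — that the only cross-edges are the perfect matchings $k\gamma \sim (k+1)\gamma$ between consecutive copies of $\Pi^a_{n-1}$ and the matching $0(a-1)\beta \sim 0a\beta$ attaching $\Pi^a_{n-2}$ to the copy $0\Pi^a_{n-1}$ — is exactly the structure the authors record in their schematic of the canonical decomposition (the $P_a\square\Pi^a_{n-1}$ and $P_2\square\Pi^a_{n-2}$ annotations), so it makes precise rather than departs from their argument.
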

The decomposition of Theorem \ref{candec} is called the {\em canonical
decomposition} of $\Pi^a_n$.

\begin{figure}\centering
\newcommand{\boundellipse}[3]
{(#1) ellipse (#2 and #3)
}

\begin{tikzpicture}[scale=1]\tikzmath{\x1 = 0.35; \y1 =-0.05; \z1=-90; \w1=0.2; \xs=-8; \ys=0; \yss=-5;
\x2 = \x1 + 1; \y2 =\y1 +3; } \small
\draw \boundellipse{-2.5,-0.9}{0.5}{1};
\draw \boundellipse{0,-0.9}{0.5}{1};
\draw \boundellipse{0,0}{1}{2};
\draw \boundellipse{2.5,0}{1}{2};
\draw \boundellipse{5,0}{1}{2};
\draw \boundellipse{10,0}{1}{2};
\node [label={[label distance=\y1 cm]\z1: $0\alpha$},circle,fill=blue,draw=black,scale=\x1](A1) at (0,1) {};
\node [label={[label distance=\y1 cm]\z1: $1\alpha$},circle,fill=blue,draw=black,scale=\x1](A2) at (2.5,1) {};
\node [label={[label distance=\y1 cm]\z1: $2\alpha$},circle,fill=blue,draw=black,scale=\x1](A3) at (5,1) {};
\node [label={[label distance=\y1 cm]\z1: $(a-1)\alpha$},circle,fill=blue,draw=black,scale=\x1](A4) at (10,1) {};
\node [label={[label distance=-0.35 cm]\z1: $\cdots$}](A5) at (7.5,1) {};

\draw [line width=\w1 mm,dashed] (A1)--(A2)--(A3)--(6.5,1)  (8.5,1)--(A4); 

\node [label={[label distance=\y1 cm]\z1: $0(a-1)\beta$},circle,fill=blue,draw=black,scale=\x1](A6) at (0,-0.9) {};
\node [label={[label distance=\y1 cm]\z1: $0a\beta$},circle,fill=blue,draw=black,scale=\x1](A7) at (-2.5,-0.9) {};
\draw [line width=\w1 mm,dashed] (A6)--(A7); 

\draw [black, left=3pt,
    decorate, 
    decoration = {brace,amplitude=5pt}](-1,2.2)--(11,2.2) node[pos=0.5,above=5pt,black]{$P_a\square\Pi^a_{n-1}$};
    
\draw [black, left=3pt,
    decorate, 
    decoration = {brace,amplitude=5pt}](0.5,-2.2)--(-3,-2.2) node[pos=0.5,below=5pt,black]{$P_2\square\Pi^a_{n-2}$};
\end{tikzpicture}\caption{Canonical decomposition $\Pi^a_n=\Pi^a_{n-1}\oplus\cdots\oplus\Pi^a_{n-1}\oplus\Pi^a_{n-2}$.}\label{fig_can_dec_diagram}
\end{figure}

In Figure \ref{fig_can_dec_diagram} we show a schematic representation of
the described canonical decomposition. As an example,
Figure \ref{fig:canon dec of Pi_3^{3}} shows the canonical decomposition of
graph $\Pi_3^{3}$ into three copies of graph $\Pi_2^{3}$ and one copy of
graph $\Pi_1^{3}$. 

It is worth noting that
$\Pi^a_{n-1}\oplus\cdots\oplus\Pi^a_{n-1}=P_{a}\square \Pi^a_{n-1}$,
where $P_{a}$ is the path graph on $a$ vertices.

A map $\chi:V(G)\to\left\lbrace 0,1 \right\rbrace$ is a {\em proper
$2$-coloring} of a graph $G$ if $\chi(v_1)\neq \chi(v_2)$ for every two
adjacent vertices $v_1,v_2\in V(G)$. A graph $G$ is {\em bipartite} if its
set of vertices $V(G)$ can be decomposed into two disjoint subsets such that
no two vertices of the same subset share an edge. Equivalently, a graph $G$
is bipartite if it admits a proper $2$-coloring.

\begin{theorem}
All metallic cubes are bipartite.
\end{theorem}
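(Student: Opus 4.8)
The plan is to produce an explicit proper $2$-coloring of $\Pi^a_n$ coming from the parity of the digit sum. For a vertex $\alpha=\alpha_1\cdots\alpha_n\in\mathcal{S}^a_n$ I would regard each letter $\alpha_k$ as an integer in $\{0,1,\dots,a\}$ and set
\[
 \chi(\alpha)=\sum_{k=1}^{n}\alpha_k \pmod 2 ,
\]
and then check that $\chi(\alpha)\neq\chi(\beta)$ for every edge $\alpha\beta$ of $\Pi^a_n$.

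The verification is immediate from the definition of adjacency: an edge of $\Pi^a_n$ joins two strings that agree in all positions but one, where one string carries a letter $k$ and the other the letter $k+1$ (equivalently, $\overline{h}(\alpha,\beta)=1$). Hence traversing an edge changes $\sum_{k=1}^n\alpha_k$ by exactly $\pm1$ and so flips its parity. Therefore $\chi$ is a proper $2$-coloring, its color classes being the strings of even, respectively odd, digit sum, and $\Pi^a_n$ is bipartite. I expect no real obstacle here; in particular the special letter $a$, which may occur only inside a block $0a$, needs no separate treatment, since that restriction only constrains which strings are vertices and not the fact that each edge alters a single coordinate by one.

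For completeness I would also note an inductive variant based on the canonical decomposition of Theorem~\ref{candec}: the base cases $\Pi^a_0=K_1$ and $\Pi^a_1=P_a$ are bipartite; a Cartesian product of bipartite graphs is bipartite, so $P_a \square \Pi^a_{n-1}$ is bipartite; and one then has to check that the edges introduced by the $\oplus$-operation, each of which again changes one coordinate by one (an $a-1$ turned into an $a$), run between the two color classes of the assembled graph rather than inside one of them. This last compatibility check is the only mildly delicate point of that route, and since it is exactly what the digit-sum parity records, the direct coloring above is the cleaner argument to present.
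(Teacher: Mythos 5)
Your digit-sum parity argument is correct: by the paper's own characterization of adjacency via the modified Hamming distance, every edge of $\Pi^a_n$ joins two strings that differ in a single coordinate by exactly one, so $\chi(\alpha)=\sum_{k}\alpha_k \bmod 2$ flips along every edge and is therefore a proper $2$-coloring. This is a genuinely different route from the paper, which proceeds by induction on $n$ using the canonical decomposition $\Pi^a_n=\Pi^a_{n-1}\oplus\cdots\oplus\Pi^a_{n-1}\oplus\Pi^a_{n-2}$, coloring the $a$ copies of $\Pi^a_{n-1}$ alternately with a coloring $\chi$ and its complement $\chi'$ according to the parity of the leading letter, and using $\chi'$ on the $\Pi^a_{n-2}$ factor. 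Your approach is shorter and fully explicit, exhibits the two color classes concretely (strings of even versus odd digit sum --- which is in fact exactly the coloring displayed in the paper's Figure of $\Pi^3_3$), and avoids the bookkeeping of complementary colorings across copies; the paper's approach, by contrast, is chosen to showcase the canonical decomposition that drives the rest of the section, but its induction implicitly relies on the same one-coordinate-changes-by-one observation to see that the inter-copy edges are bichromatic. Your closing remark about the inductive variant correctly identifies where that route's only delicate step lies.
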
 \begin{proof}
Proof is by induction on $n$. We start by observing that $\Pi^a_1$ is
isomorphic to the path graph on $a$ vertices and, thus, bipartite. Since
$\Pi^a_2$ is an $a\times a$ grid with one additional vertex $0a$, it is an
easy exercise to see that it also admits a proper $2$-coloring. Now we
suppose that $\Pi^a_k$ is bipartite for every $k<n$. By the inductive
hypothesis, it admits a proper $2$-coloring
$\chi:\Pi^a_{n-1}\to \left\lbrace 0,1 \right\rbrace$. Consider the map
$\chi':\Pi^a_{n-1}\to\left\lbrace 0,1 \right\rbrace$, where
$\chi'(v)=1-\chi(v)$. Then the map $\chi'$ is a complementary proper
$2$-coloring for graph $\Pi^a_{n-1}$. Since
$\Pi^a_n=\Pi^a_{n-1}\oplus\cdots\oplus\Pi^a_{n-1}\oplus\Pi^a_{n-2}$, we
can choose a coloring $\chi$ for subgraph $\Pi^a_{n-1}$ in $\Pi^a_n$ if the
subgraph is induced by vertices starting with even $k$, and $\chi'$ if 
$k$ is odd. Finally, for the one copy of $\Pi^a_{n-2}$ in the canonical
decomposition, we can choose $\chi'$ restricted to $\Pi^a_{n-2}$. Thus we
obtained a proper $2$-coloring of graph $\Pi^a_{n}$.
\end{proof}
Figure \ref{fig:canon dec of Pi_3^{3}} shows a proper $2$-coloring of
graph $\Pi^3_3$.

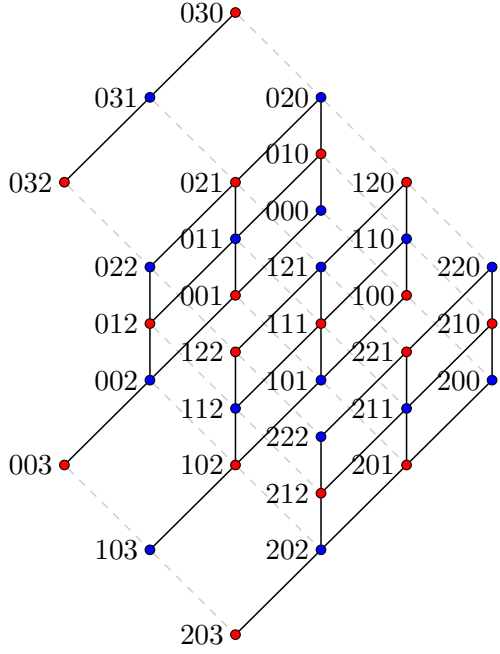
\begin{figure}[h!] \centering
\begin{tikzpicture}[scale=0.75]
\tikzmath{\x1 = 0.35; \y1 =-0.05; \z1=180; \w1=0.2; \xs=-8; \ys=0; \yss=-5;
\x2 = \x1 + 1; \y2 =\y1 +3; } 
\small
\node [label={[label distance=\y1 cm]\z1: $000$},circle,fill=blue,draw=black,scale=\x1](A1) at (0,4) {};
\node [label={[label distance=\y1 cm]\z1: $010$},circle,fill=red,draw=black,scale=\x1](A2) at (0,5) {};
\node [label={[label distance=\y1 cm]\z1: $020$},circle,fill=blue,draw=black,scale=\x1](A3) at (0,6) {};
\node [label={[label distance=\y1 cm]\z1: $001$},circle,fill=red,draw=black,scale=\x1](A4) at (-1.5,2.5) {};
\node [label={[label distance=\y1 cm]\z1: $011$},circle,fill=blue,draw=black,scale=\x1](A5) at (-1.5,3.5) {};
\node [label={[label distance=\y1 cm]\z1: $021$},circle,fill=red,draw=black,scale=\x1](A6) at (-1.5,4.5) {};
\node [label={[label distance=\y1 cm]\z1: $002$},circle,fill=blue,draw=black,scale=\x1](A7) at (-3,1) {};
\node [label={[label distance=\y1 cm]\z1: $012$},circle,fill=red,draw=black,scale=\x1](A8) at (-3,2) {};
\node [label={[label distance=\y1 cm]\z1: $022$},circle,fill=blue,draw=black,scale=\x1](A9) at (-3,3) {};
\node [label={[label distance=\y1 cm]\z1: $100$},circle,fill=red,draw=black,scale=\x1](A10) at (1.5,2.5) {};
\node [label={[label distance=\y1 cm]\z1: $110$},circle,fill=blue,draw=black,scale=\x1](A11) at (1.5,3.5) {};
\node [label={[label distance=\y1 cm]\z1: $120$},circle,fill=red,draw=black,scale=\x1](A12) at (1.5,4.5) {};
\node [label={[label distance=\y1 cm]\z1: $101$},circle,fill=blue,draw=black,scale=\x1](A13) at (0,1) {};
\node [label={[label distance=\y1 cm]\z1: $111$},circle,fill=red,draw=black,scale=\x1](A14) at (0,2) {};
\node [label={[label distance=\y1 cm]\z1: $121$},circle,fill=blue,draw=black,scale=\x1](A15) at (0,3) {};
\node [label={[label distance=\y1 cm]\z1: $102$},circle,fill=red,draw=black,scale=\x1](A16) at (-1.5,-0.5) {};
\node [label={[label distance=\y1 cm]\z1: $112$},circle,fill=blue,draw=black,scale=\x1](A17) at (-1.5,0.5) {};
\node [label={[label distance=\y1 cm]\z1: $122$},circle,fill=red,draw=black,scale=\x1](A18) at (-1.5,1.5) {};
\node [label={[label distance=\y1 cm]\z1: $200$},circle,fill=blue,draw=black,scale=\x1](A19) at (3,1) {};
\node [label={[label distance=\y1 cm]\z1: $210$},circle,fill=red,draw=black,scale=\x1](A20) at (3,2) {};
\node [label={[label distance=\y1 cm]\z1: $220$},circle,fill=blue,draw=black,scale=\x1](A21) at (3,3) {};
\node [label={[label distance=\y1 cm]\z1: $201$},circle,fill=red,draw=black,scale=\x1](A22) at (1.5,-0.5) {};
\node [label={[label distance=\y1 cm]\z1: $211$},circle,fill=blue,draw=black,scale=\x1](A23) at (1.5,0.5) {};
\node [label={[label distance=\y1 cm]\z1: $221$},circle,fill=red,draw=black,scale=\x1](A24) at (1.5,1.5) {};
\node [label={[label distance=\y1 cm]\z1: $202$},circle,fill=blue,draw=black,scale=\x1](A25) at (0,-2) {};
\node [label={[label distance=\y1 cm]\z1: $212$},circle,fill=red,draw=black,scale=\x1](A26) at (0,-1) {};
\node [label={[label distance=\y1 cm]\z1: $222$},circle,fill=blue,draw=black,scale=\x1](A27) at (0,0) {};
\node [label={[label distance=\y1 cm]\z1: $003$},circle,fill=red,draw=black,scale=\x1](A28) at (-4.5,-0.5) {};
\node [label={[label distance=\y1 cm]\z1: $103$},circle,fill=blue,draw=black,scale=\x1](A29) at (-3,-2) {};
\node [label={[label distance=\y1 cm]\z1: $203$},circle,fill=red,draw=black,scale=\x1](A30) at (-1.5,-3.5) {};
\node [label={[label distance=\y1 cm]\z1: $030$},circle,fill=red,draw=black,scale=\x1](A31) at (-1.5,7.5) {};
\node [label={[label distance=\y1 cm]\z1: $031$},circle,fill=blue,draw=black,scale=\x1](A32) at (-3,6) {};
\node [label={[label distance=\y1 cm]\z1: $032$},circle,fill=red,draw=black,scale=\x1](A33) at (-4.5,4.5) {};

\draw [line width=\w1 mm] (A1)--(A2)--(A3) (A4)--(A5)--(A6) (A7)--(A8)--(A9) (A10)--(A11)--(A12) (A13)--(A14)--(A15) (A16)--(A17)--(A18) (A19)--(A20)--(A21) (A22)--(A23)--(A24) (A25)--(A26)--(A27) (A1)--(A4)--(A7)--(A28) (A2)--(A5)--(A8) (A3)--(A6)--(A9) (A10)--(A13)--(A16)--(A29) (A11)--(A14)--(A17) (A12)--(A15)--(A18) (A19)--(A22)--(A25)--(A30) (A20)--(A23)--(A26) (A21)--(A24)--(A27) (A31)--(A32)--(A33); 

\draw [line width=\w1 mm,dashed, opacity=0.2] (A1)--(A10)--(A19) (A2)--(A11)--(A20) (A31)--(A3)--(A12)--(A21) (A4)--(A13)--(A22) (A5)--(A14)--(A23) (A32)--(A6)--(A15)--(A24) (A7)--(A16)--(A25) (A8)--(A17)--(A26) (A33)--(A9)--(A18)--(A27)  (A28)--(A29)--(A30); 

\end{tikzpicture}
\caption{The canonical decomposition and a proper $2$-coloring of $\Pi_3^{3}=\Pi_3^{2}\oplus\Pi_3^{2}\oplus\Pi_3^{2}\oplus\Pi_3^{1}$.} \label{fig:canon dec of Pi_3^{3}}
\end{figure}

As we have mentioned above, the sequence $s^a_n$ defined by recurrence
(\ref{Metallic_recursion}) satisfies the ell-known identity
\begin{align}\label{eq:metallic identity}
s^a_n=\sum\limits_{k\geq 0} \binom{n-k}{k}a^{n-2k}.
\end{align}
We now present another decomposition of metallic cubes, providing a 
combinatorial representation of identity (\ref{eq:metallic identity}).
To this end, recall that $P_a$ denotes a path graph with $a$ vertices,
and $P_a^k$ denotes Cartesian product of path $P_a$ with itself $k$ times,
that is $P_{a}\square P_{a}\cdots \square P_{a}$. Also note that
$|V(P_a^k)|=a^k$. Graphs $P_a^k$ are called grids or lattices.
The following theorem illustrates the combinatorial meaning of the Fibonacci 
polynomials of identity (\ref{eq:metallic identity}).

\begin{theorem}
Graph $\Pi^a_n$ can be decomposed into $F_{n+1}$ grids, where $F_n$ denotes $n$-th Fibonacci number. \label{tm:decomposition II} \end{theorem}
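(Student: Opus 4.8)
\emph{Proof proposal.}
First I would fix terminology. By a \emph{grid} I mean a graph isomorphic to $P_a^k$ for some $k\geq 0$, where $P_a^0=K_1$, and by ``$\Pi^a_n$ decomposes into $N$ grids'' I mean that $V(\Pi^a_n)$ partitions into $N$ classes, each of which induces a grid in $\Pi^a_n$; edges running between classes are allowed, exactly as in the canonical decomposition of Theorem~\ref{candec}. With this reading the statement in fact refines to: for each $k\geq 0$, $V(\Pi^a_n)$ contains exactly $\binom{n-k}{k}$ classes inducing $P_a^{n-2k}$. Since such a class has $a^{n-2k}$ vertices, this reproves identity~(\ref{eq:metallic identity}) term by term, and summing the numbers of classes gives $\sum_{k\geq 0}\binom{n-k}{k}=F_{n+1}$, which is the asserted count.

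The cleanest route is the direct combinatorial one, which also exhibits the promised combinatorial meaning. To a string $x_1\cdots x_n\in\mathcal{S}^a_n$ I would associate the set $D$ of index pairs $\{i-1,i\}$ at which the block $0a$ occurs, equivalently the positions of the letter $a$ together with their mandatory preceding $0$. These pairs are automatically pairwise disjoint, so $D$ is a placement of some number $k$ of dominoes on the path $1$–$2$–$\cdots$–$n$, and the $n-2k$ positions outside $\bigcup D$ carry arbitrary letters from $\{0,1,\dots,a-1\}$ with no further constraint. For a fixed placement $D$ the set $C_D$ of strings realizing it is thus in bijection with $\{0,\dots,a-1\}^{n-2k}$, and the placement is read off uniquely from each string, so the classes $\{C_D\}$ partition $V(\Pi^a_n)$. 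It remains to identify the induced subgraph on $C_D$: an edge of $\Pi^a_n$ changes a single letter $\ell$ to $\ell+1$ with $0\le\ell\le a-1$, so it can never modify the frozen letter $a$ (it is $>a-1$) nor turn the $0$ preceding a frozen $a$ into $1$ (the string $1a$ is illegal), whence every edge inside $C_D$ merely increments one free letter; consequently $C_D$ induces precisely $P_a^{n-2k}$. Finally, the number of placements of $k$ dominoes on a length-$n$ path is $\binom{n-k}{k}$, and the total number of square-and-domino tilings of a length-$n$ board is $F_{n+1}$, giving both the refined and the stated count.

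An alternative is induction on $n$ via Theorem~\ref{candec}: using $P_a\square P_a^k\cong P_a^{k+1}$ and the fact that $P_a\square(-)$ sends induced subgraphs to induced subgraphs, a decomposition of $\Pi^a_{n-1}$ into $g_{n-1}$ grids yields a decomposition of $P_a\square\Pi^a_{n-1}$ into $g_{n-1}$ grids (each of dimension one higher), and adjoining the copy of $\Pi^a_{n-2}$ with its $g_{n-2}$ grids gives $g_n=g_{n-1}+g_{n-2}$; the base cases $\Pi^a_0=K_1$ and $\Pi^a_1=P_a$ give $g_0=g_1=1=F_1=F_2$, so $g_n=F_{n+1}$, and tracking dimensions recovers $c_{n,j}=c_{n-1,j-1}+c_{n-2,j}$ with $c_{0,0}=c_{1,1}=1$, solved by $c_{n,n-2k}=\binom{n-k}{k}$. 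In either route the only genuinely content-bearing point — and the one I would write out in full — is the verification that the subgraph induced by a fixed domino placement has no edges beyond the path-power ones, i.e.\ that no adjacency of $\Pi^a_n$ touches a frozen $a$ or the $0$ immediately before it; the rest (disjointness of the classes, the domino count, and the Fibonacci identity $\sum_k\binom{n-k}{k}=F_{n+1}$) is routine.
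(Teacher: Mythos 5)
Your main argument is exactly the paper's proof: partition $V(\Pi^a_n)$ according to the placement of the $0a$-blocks, observe that each class with $k$ blocks induces a grid $P_a^{n-2k}$ and that the classes are disjoint, and count the placements by $\binom{n-k}{k}$, summing to $F_{n+1}$. Your explicit verification that no edge can touch a frozen $a$ or its preceding $0$, and the alternative induction via Theorem~\ref{candec}, merely supply detail the paper leaves implicit, so the proposal is correct and essentially identical in approach.
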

\begin{proof}
Consider the subset of strings $S\subset\mathcal{S}^a_n$ where each string
$\alpha\in S$ has $k$ blocks $0a$ in the same position. Then $\alpha$ has
$n-2k$ letters that are not part of a $0a$ block. There are $a^{n-2k}$ such
strings and the subset $S\subset\mathcal{S}^a_n$ induces a subgraph of
$\Pi^a_n$ isomorphic to grid $P_{a}^{n-2k}$ with $a^{n-2k}$ vertices. Also
note that different locations of blocks $0a$ produce different strings. Hence,
for different alignments, the induced grids are vertex-disjoint. To finish
our proof, we just need to determine the number of strings with exactly $k$
blocks $0a$. We can identify block as a single letter, and reduce our problem
to a subset of $n-k$ positions, where we need to choose $k$ positions for
blocks. Thus, we have $\binom{n-k}{k}$ different alignments for $k$ blocks
$0a$, and every alignment induces disjoint subgraphs $P_a^{n-2k}$. We
obtained a decomposition
$$\Pi^a_n=\bigoplus_{k\geq 0}\binom{n-k}{k}P^{n-2k}_a.$$
Then the number of grids is $\sum\limits_{k\geq 0}\binom{n-k}{k}=F_{n+1}$,
and this completes our proof.
\end{proof}

To obtain our last result in this subsection, we consider the map
$\rho:V(\Pi^a_n)\to\mathcal{F}_n$ defined on alphabet
$\left\lbrace0,1,\dots,a\right\rbrace$ and extended to $V(\Pi^a_n)$ by
concatenation, as follows
$$\rho(\alpha)=\begin{cases} 0, & 0\leq \alpha \leq a-1,\\
		1, & \alpha=1.\end{cases} $$ 
For a Fibonacci string $w$, let $\rho^{-1}(w)$ denote the subgraph of
$\Pi^a_n$ induced by vertices
$\left\lbrace v\in V(\Pi^a_n)| \rho(v)=w\right\rbrace$. 

\begin{theorem} Let $\Pi^a_n/\rho$ be the quotient graph of $\Pi^a_n$
obtained by identifying all vertices which are identified by $\rho$, and
two vertices $w_1$ and $w_2$ in $\Pi^a_n/\rho$ are adjacent if there is at
least one edge in $\Pi^a_n$ connecting blocks $\rho^{-1}(w_1)$ and
$\rho^{-1}(w_2)$ Then $\Pi^a_n/\rho$ is isomorphic to the Fibonacci
cube $\Gamma_{n-1}$.
\end{theorem}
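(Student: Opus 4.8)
The plan is to make the identification in the statement completely explicit and then check that it preserves adjacency in both directions. First I would pin down the image of $\rho$: since in any $\alpha\in\mathcal S^a_n$ the letter $a$ occurs only as the second letter of a block $0a$, the word $\alpha$ contains no $aa$ and does not begin with $a$, so $\rho(\alpha)$ is a binary word of length $n$ with no $11$ and not beginning with $1$. Conversely every such binary word $w$ is hit: replacing each $1$ of $w$ by $a$ and every other symbol by $0$ yields a legal string, because each $1$ of $w$ is preceded by a $0$. Deleting the (necessarily present) leading $0$ therefore gives a bijection $\phi$ from the image of $\rho$ onto the set $\mathcal F_{n-1}$ of Fibonacci strings of length $n-1$, i.e.\ onto $V(\Gamma_{n-1})$. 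This already forces the index shift in the statement: contracting the fibres of $\rho$ in effect deletes the first coordinate, so the Fibonacci cube that appears is $\Gamma_{n-1}$, not $\Gamma_n$.

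It then remains to show that under $\phi$ quotient-adjacency corresponds exactly to adjacency in $\Gamma_{n-1}$. One implication is immediate: every edge of $\Pi^a_n$ changes a single letter by $\pm 1$, so the $\rho$-images of its two endpoints agree in all but at most one coordinate. Hence if $[w_1]$ and $[w_2]$ are adjacent in $\Pi^a_n/\rho$ then $w_1$ and $w_2$, being distinct, differ in exactly one coordinate, and since both begin with $0$ the strings $\phi(w_1),\phi(w_2)$ are Fibonacci strings of length $n-1$ at Hamming distance one, hence adjacent in $\Gamma_{n-1}$.

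For the reverse implication I would argue by an explicit lift. Suppose $w_1,w_2$ in the image of $\rho$ differ in exactly the coordinate $i$, say $(w_1)_i=0$ and $(w_2)_i=1$. Because $w_2$ has no $11$ and does not start with $1$, we have $i\geq 2$, $(w_2)_{i-1}=0$, and $(w_2)_{i+1}=0$ whenever $i<n$; the same equalities hold for $w_1$ off coordinate $i$, since $w_1$ and $w_2$ agree there. Let $\beta$ be the canonical lift of $w_2$ (each $1$ replaced by $a$, each $0$ left unchanged) and let $\alpha$ be the canonical lift of $w_1$ with its $i$-th letter raised from $0$ to $a-1$. Then $\rho(\alpha)=w_1$ and $\rho(\beta)=w_2$; the word $\alpha$ is legal, since the only way the modification could spoil legality would be for position $i+1$ of $\alpha$ to carry an $a$ forcing position $i$ to be $0$, whereas position $i+1$ of $w_1$ is $0$; and $\alpha$ and $\beta$ agree in every coordinate except $i$, where $|(a-1)-a|=1$. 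Thus $\alpha\beta$ is an edge of $\Pi^a_n$ joining the fibres of $w_1$ and $w_2$, so $[w_1]$ and $[w_2]$ are adjacent in the quotient. Combined with the previous paragraph, this shows $\phi$ is an isomorphism $\Pi^a_n/\rho\cong\Gamma_{n-1}$.

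The only genuinely delicate point — and the one I expect to cost the most care in a full write-up — is the legality check for the modified lift $\alpha$: it is exactly here that one must use that $w_2$, lying in the image of $\rho$, really is a Fibonacci string, so that the flipped coordinate $i$ has a $0$ on both sides and raising the $i$-th letter to $a-1$ cannot create a forbidden $a$. Everything else — the description of $\operatorname{Im}(\rho)$, the bijection $\phi$, and the easy implication — is routine, apart from keeping the $n$ versus $n-1$ indexing straight.
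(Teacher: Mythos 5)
Your proposal is correct and follows essentially the same route as the paper: identify the fibres of $\rho$ with the placements of $0a$-blocks, hence with Fibonacci strings of length $n-1$, and check that quotient-adjacency corresponds to single-bit difference. The only real difference is one of rigor — where the paper invokes its grid-decomposition theorem for the vertex count and justifies the reverse implication by a single example, you supply the explicit canonical lifts and the legality check for the modified lift, which is indeed the one point that genuinely needs care.
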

\begin{proof} The map $\rho$ identifies two vertices $v_1,v_2\in\Pi^a_n$
if they have the same number and the same positions of blocks $0a$. But
all vertices having equal number of $k$ blocks $0a$ in the same positions
induce a grid subgraph $P_a^{n-2k}$. So, each such grid is mapped into the
single vertex in $\Pi^a_n/\rho$. By Theorem \ref{tm:decomposition II},
$|V(\Pi^a_n/\rho)|=F_{n+1}$. Two grids have at least one edge connecting
them if the number of $0a$ blocks between them differ by exactly one, with
the grid having one block less, having all $0a$ blocks on same positions as
the other grid. For example, for $a=5$ and $n=5$, one grid $P^1_5$ is induced
by vertices $\alpha0505$ and another grid $P^3_5$ is induced by vertices
$\beta_105\beta_2\beta_3$, where
$\alpha,\beta_1,\beta_2,\beta_3\in\left\lbrace 0,1,2,3,4\right\rbrace$.
Then those grids  must have an edge connecting them. For example, the edge
connecting vertices $00504$ and $00505$. From the definition of map $\rho$
it is clear that $\rho$ maps two grids in the neighboring vertices in
$\Pi^a_n/\rho$ only if their image by $\rho$ differs in a single bit. Hence,
$\Pi^a_n/\rho$ is isomorphic to $\Gamma_{n-1}$. \end{proof}

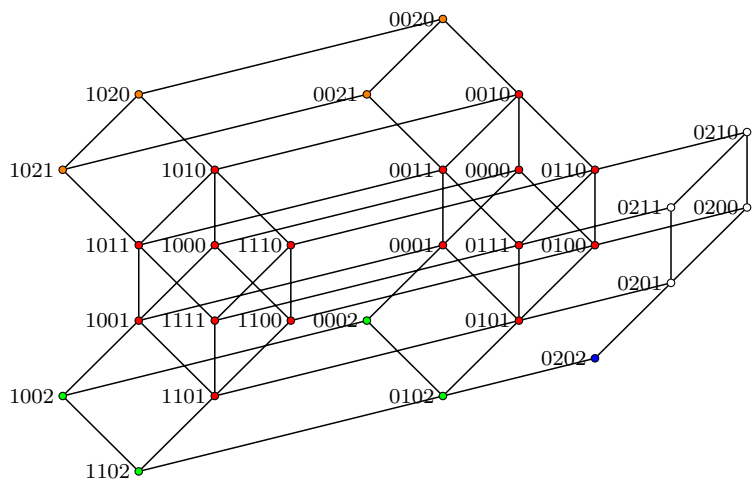
\begin{figure}[h!] \centering
\begin{tikzpicture}[scale=1]
\tikzmath{\x1 = 0.35; \y1 =-0.05; \z1=180; \w1=0.2; \xs=-8; \ys=0; \yss=-5;
\x2 = \x1 + 1; \y2 =\y1 +3; } 
\scriptsize
\node [label={[label distance=\y1 cm]\z1: $0001$},circle,fill=red,draw=black,scale=\x1](A1) at (0,1) {};
\node [label={[label distance=\y1 cm]\z1: $0101$},circle,fill=red,draw=black,scale=\x1](A2) at (1,0) {};
\node [label={[label distance=\y1 cm]\z1: $0100$},circle,fill=red,draw=black,scale=\x1](A3) at (2,1) {};
\node [label={[label distance=\y1 cm]\z1: $0000$},circle,fill=red,draw=black,scale=\x1](A4) at (1,2) {};
\node [label={[label distance=\y1 cm]\z1: $0011$},circle,fill=red,draw=black,scale=\x1](A5) at (0,2) {};
\node [label={[label distance=\y1 cm]\z1: $0111$},circle,fill=red,draw=black,scale=\x1](A6) at (1,1) {};
\node [label={[label distance=\y1 cm]\z1: $0110$},circle,fill=red,draw=black,scale=\x1](A7) at (2,2) {};
\node [label={[label distance=\y1 cm]\z1: $0010$},circle,fill=red,draw=black,scale=\x1](A8) at (1,3) {};
\node [label={[label distance=\y1 cm]\z1: $0002$},circle,fill=green,draw=black,scale=\x1](A9) at (-1,0) {};
\node [label={[label distance=\y1 cm]\z1: $0102$},circle,fill=green,draw=black,scale=\x1](A10) at (0,-1) {};
\node [label={[label distance=\y1 cm]\z1: $0021$},circle,fill=orange,draw=black,scale=\x1](A11) at (-1,3) {};
\node [label={[label distance=\y1 cm]\z1: $0020$},circle,fill=orange,draw=black,scale=\x1](A12) at (0,4) {};
\node [label={[label distance=\y1 cm]\z1: $1001$},circle,fill=red,draw=black,scale=\x1](A13) at (-4,0) {};
\node [label={[label distance=\y1 cm]\z1: $1101$},circle,fill=red,draw=black,scale=\x1](A14) at (-3,-1) {};
\node [label={[label distance=\y1 cm]\z1: $1100$},circle,fill=red,draw=black,scale=\x1](A15) at (-2,0) {};
\node [label={[label distance=\y1 cm]\z1: $1000$},circle,fill=red,draw=black,scale=\x1](A16) at (-3,1) {};
\node [label={[label distance=\y1 cm]\z1: $1011$},circle,fill=red,draw=black,scale=\x1](A17) at (-4,1) {};
\node [label={[label distance=\y1 cm]\z1: $1111$},circle,fill=red,draw=black,scale=\x1](A18) at (-3,0) {};
\node [label={[label distance=\y1 cm]\z1: $1110$},circle,fill=red,draw=black,scale=\x1](A19) at (-2,1) {};
\node [label={[label distance=\y1 cm]\z1: $1010$},circle,fill=red,draw=black,scale=\x1](A20) at (-3,2) {};
\node [label={[label distance=\y1 cm]\z1: $1002$},circle,fill=green,draw=black,scale=\x1](A21) at (-5,-1) {};
\node [label={[label distance=\y1 cm]\z1: $1102$},circle,fill=green,draw=black,scale=\x1](A22) at (-4,-2) {};
\node [label={[label distance=\y1 cm]\z1: $1021$},circle,fill=orange,draw=black,scale=\x1](A23) at (-5,2) {};
\node [label={[label distance=\y1 cm]\z1: $1020$},circle,fill=orange,draw=black,scale=\x1](A24) at (-4,3) {};

\node [label={[label distance=\y1 cm]\z1: $0201$},circle,fill=white,draw=black,scale=\x1](A25) at (3,0.5) {};
\node [label={[label distance=\y1 cm]\z1: $0200$},circle,fill=white,draw=black,scale=\x1](A26) at (4,1.5) {};
\node [label={[label distance=\y1 cm]\z1: $0210$},circle,fill=white,draw=black,scale=\x1](A27) at (4,2.5) {};
\node [label={[label distance=\y1 cm]\z1: $0211$},circle,fill=white,draw=black,scale=\x1](A28) at (3,1.5) {};
\node [label={[label distance=\y1 cm]\z1: $0202$},circle,fill=blue,draw=black,scale=\x1](A29) at (2,-0.5) {};

\draw [line width=\w1 mm] (A1)--(A2)--(A3)--(A4)--(A1)--(A5)--(A6)--(A7)--(A8)--(A5)--(A11)--(A12)--(A8) (A6)--(A2)--(A10)--(A9)--(A1) (A3)--(A7) (A4)--(A8) 
(A13)--(A14)--(A15)--(A16)--(A13)--(A17)--(A18)--(A19)--(A20)--(A17)--(A23)--(A24)--(A20) (A18)--(A14)--(A22)--(A21)--(A13) (A15)--(A19) (A16)--(A20)  (A25)--(A26)--(A27)--(A28)--(A25)--(A29) (A22)--(A10)--(A29) (A14)--(A2)--(A25) (A15)--(A3)--(A26) (A18)--(A6)--(A28) (A19)--(A7)--(A27)  (A21)--(A9) (A13)--(A1) (A16)--(A4) (A17)--(A5) (A20)--(A8) (A23)--(A11) (A24)--(A12); 
\end{tikzpicture}  
\caption{Decomposition of $\Pi_4^{2}=P^{4}_2\oplus P^{2}_2\oplus P^{2}_2\oplus P^{2}_2\oplus P_2^{0}$.} \label{fig:canon dec of Pi_4^2}
\end{figure}


\subsection{Embedding into hypercubes}

Since hypercubes $Q_n$ have binary strings as vertices, and all binary
strings of length $n$ are vertices in $\Pi_n^{a}$ for $a\geq 2$, we have a
natural inclusion $Q_n\subset \Pi^{a}_n$. Furthermore, for $a\geq 1$, since
$\mathcal{S}^1_n\subset \mathcal{S}^a_n$, for every $a$, we have
$\Gamma_{n-1}=\Pi^{1}_n\subseteq\Pi^{a}_n$. The following theorem shows
that every metallic cube is an induced subgraph of some Fibonacci cube. 

\begin{theorem}\label{tm:Pi_embedding}  For any $a\geq 1$ and $n\geq 1$,
metallic cubes are induced subgraphs of Fibonacci cubes and, hence,
hypercubes.
\end{theorem}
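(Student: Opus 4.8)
The plan is to encode each of the letters $0,1,\dots,a$ by a binary word chosen so that the induced concatenation map realizes $\Pi^a_n$ as an induced subgraph of one large Fibonacci cube. For $0\le k\le a$ put
\[
 w_k \;=\; \underbrace{10\,10\cdots 10}_{k}\underbrace{00\cdots 0}_{2(a-k)}\;=\;(10)^k\,0^{2(a-k)}\in\{0,1\}^{2a}.
\]
First I would record the two properties that make everything work. Property (i): each $w_k$ is a Fibonacci string ending in the letter $0$. Property (ii): $\overline{h}(w_k,w_l)=|k-l|$ for all $0\le k,l\le a$, because for $k<l$ the words $w_k$ and $w_l$ differ exactly in the odd positions $2k+1,2k+3,\dots,2l-1$. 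Both follow immediately from the shape of $w_k$.

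Next I would define $\Phi\colon\{0,1,\dots,a\}^n\to\{0,1\}^{2an}$ by concatenation, $\Phi(x_1x_2\cdots x_n)=w_{x_1}w_{x_2}\cdots w_{x_n}$, and restrict it to $V(\Pi^a_n)=\mathcal{S}^a_n$. Since the blocks $w_k$ all have the same length $2a$ and are pairwise distinct, $\Phi$ is injective. Since each block $w_{x_i}$ is a Fibonacci string ending in $0$, no factor $11$ is created across a block boundary, so $\Phi(\alpha)$ never contains two consecutive ones; hence $\Phi(\mathcal{S}^a_n)\subseteq\mathcal{F}_{2an}=V(\Gamma_{2an})$.

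It then remains to check that $\Phi$ is an isomorphism of $\Pi^a_n$ onto the subgraph of $\Gamma_{2an}$ induced by $\Phi(\mathcal{S}^a_n)$, i.e.\ that for $\alpha,\beta\in\mathcal{S}^a_n$ one has $\overline{h}(\alpha,\beta)=1$ if and only if $\overline{h}(\Phi(\alpha),\Phi(\beta))=1$. Using the modified-Hamming-distance description of adjacency in $\Pi^a_n$, $\overline{h}(\alpha,\beta)=1$ means that $\alpha$ and $\beta$ agree in all coordinates except one, say the $i$-th, where $|\alpha_i-\beta_i|=1$; by Property (ii) this is equivalent to $\Phi(\alpha)$ and $\Phi(\beta)$ agreeing outside the $i$-th length-$2a$ block and differing in exactly one position inside it, that is, to $\overline{h}(\Phi(\alpha),\Phi(\beta))=1$, which is precisely adjacency in $\Gamma_{2an}$. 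Invalid words (those containing an $a$ not preceded by $0$) cause no difficulty: they are not vertices of $\Pi^a_n$, hence do not belong to $\Phi(\mathcal{S}^a_n)$, so the induced subgraph sees only the edges counted above. This exhibits $\Pi^a_n$ as an induced subgraph of $\Gamma_{2an}$, and since $\Gamma_{2an}$ is itself an induced subgraph of $Q_{2an}$ by definition, the second assertion follows at once.

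The only step requiring any ingenuity is the choice of the codewords $w_k$: the textbook isometric embedding $k\mapsto 1^k0^{a-1-k}$ of a path into a hypercube violates the Fibonacci condition, so one has to interleave zeros in order to destroy consecutive ones while preserving the isometry $\overline{h}(w_k,w_l)=|k-l|$ and arranging that every codeword ends in $0$, which is exactly what keeps arbitrary concatenations Fibonacci strings. Everything else is a direct verification; in particular, unlike the earlier decomposition results, no induction on $n$ is needed here.
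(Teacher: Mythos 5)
Your proposal is correct and uses essentially the same technique as the paper: both proofs substitute each letter (respectively, each primitive block, with $0a$ treated as a unit in the paper) by a fixed-length Fibonacci codeword chosen so that the codewords end in $0$ and realize the letter-path isometrically, and then extend by concatenation. Your choice $w_k=(10)^k0^{2(a-k)}$ merely replaces the paper's $\sigma(k)=0^{2k}(01)^{a-1-k}$ and handles all $a\geq 1$ and the letter $a$ uniformly, with a somewhat more explicit verification that adjacency is preserved in both directions.
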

\begin{proof} 
For $a=2$, we define a map $\sigma$ on primitive blocks, i.e., on the blocks
that every string from the set $\mathcal{S}^{2}_{n}$ can be uniquely
decomposed into, $0$, $1$, and $02$ as follows \begin{align*}
\sigma(0)&=001\\
\sigma(1)&=000\\
\sigma(02)&=001010. 
\end{align*}
For $a>2$, primitive blocks, are $0,1,\dots,a-1$ and $0a$. So, we set  
\begin{align*}
\sigma(0)&=010101\cdots 010101\\
\sigma(1)&=000101\cdots 010101\\
\sigma(2)&=000001\cdots 010101\\
 \vdots\\
\sigma(a-2)&=000000 \cdots 000001\\
\sigma(a-1)&=000000 \cdots 000000\\
\sigma(0a)&=010101\cdots 01010100100000\cdots 000000, 
\end{align*}
where strings $\sigma(k)$ have length $2a-2$, for $0\leq k\leq a-1$ and
string $\sigma(0a)$ has length $4n-4$.  

In case of $a=2$, the map $\sigma$, defined on primitive blocks, expands to
$\sigma:\Pi^2_n\to\Pi^1_{3n}$ by concatenation. In case of $a\geq 3$,
it expands to $\sigma:\Pi^a_n\to\Pi^1_{(2a-2)n}$. Since the map is injective
and preserves adjacency, we obtained an induced subgraph of Fibonacci cube
isomorphic to the graph $\Pi^{a}_{n}$. \end{proof}

Next we show that metallic cubes are median graphs.
A {\em median} of three vertices is a vertex that lies on a shortest path
between every two of three vertices. We say that a graph $G$ is a {\em median
graph} if every three vertices of $G$ have unique median. 

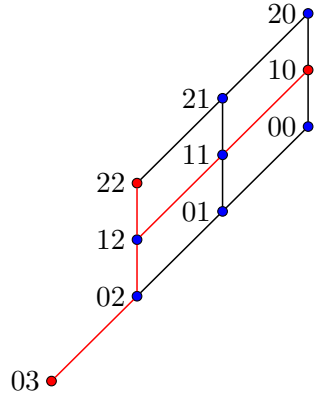
\begin{figure}[h!] \centering
\begin{tikzpicture}[scale=0.75]
\tikzmath{\x1 = 0.35; \y1 =-0.05; \z1=180; \w1=0.2; \xs=-8; \ys=0; \yss=-5;
\x2 = \x1 + 1; \y2 =\y1 +3; } 
\small

\node [label={[label distance=\y1 cm]\z1: $00$},circle,fill=blue,draw=black,scale=\x1](B1) at (0+\xs,4+\ys) {};
\node [label={[label distance=\y1 cm]\z1: $10$},circle,fill=red,draw=black,scale=\x1](B2) at (0+\xs,5+\ys) {};
\node [label={[label distance=\y1 cm]\z1: $20$},circle,fill=blue,draw=black,scale=\x1](B3) at (0+\xs,6+\ys) {};
\node [label={[label distance=\y1 cm]\z1: $01$},circle,fill=blue,draw=black,scale=\x1](B4) at (-1.5+\xs,2.5+\ys) {};
\node [label={[label distance=\y1 cm]\z1: $11$},circle,fill=blue,draw=black,scale=\x1](B5) at (-1.5+\xs,3.5+\ys) {};
\node [label={[label distance=\y1 cm]\z1: $21$},circle,fill=blue,draw=black,scale=\x1](B6) at (-1.5+\xs,4.5+\ys) {};
\node [label={[label distance=\y1 cm]\z1: $02$},circle,fill=blue,draw=black,scale=\x1](B7) at (-3+\xs,1+\ys) {};
\node [label={[label distance=\y1 cm]\z1: $12$},circle,fill=blue,draw=black,scale=\x1](B8) at (-3+\xs,2+\ys) {};
\node [label={[label distance=\y1 cm]\z1: $22$},circle,fill=red,draw=black,scale=\x1](B9) at (-3+\xs,3+\ys) {};
\node [label={[label distance=\y1 cm]\z1: $03$},circle,fill=red,draw=black,scale=\x1](B28) at (-4.5+\xs,-0.5+\ys) {};

\draw [line width=\w1 mm] (B1)--(B2)--(B3) (B4)--(B5)--(B6)   (B1)--(B4)--(B7)  (B3)--(B6)--(B9); 
\draw [line width=\w1 mm,red]   (B7)--(B8)--(B9)  (B7)--(B28) (B2)--(B5)--(B8) ;

\end{tikzpicture} 
\caption{Median of vertices $10$, $22$ and $03$ is a vertex $12$ .} \label{fig:Pi_3^2 median}
\end{figure}

Figure \ref{fig:Pi_3^2 median} shows that the unique median of vertices $10$,
$22$ and $03$ is vertex $12$, because that is only vertex that lies in
shortest paths between every two red vertices.

Klav\v{z}ar \cite{Klavzar} proved that Fibonacci and Lucas cubes are median
graphs, and Munarini \cite{Munarini} proved that Pell graphs are median
graphs. They both used the following theorem by Mulder \cite{Mulder}:
\begin{theorem}[Mulder]
A graph $G$ is median graph if and only if $G$ is a connected induced subgraph
of an $n$-cube such that with any three vertices of $G$ their median in
$n$-cube is also a vertex of $G$. 
\end{theorem}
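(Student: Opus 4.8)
The plan is to prove both implications of the equivalence, using throughout the fact that the hypercube $Q_n$ is itself a median graph in which the median of any three vertices $u,v,w$ is computed coordinatewise by majority vote, and is therefore unique; I will write $\mu(u,v,w)$ for this vertex and call a vertex set \emph{median-closed} if it contains $\mu(u,v,w)$ whenever it contains $u,v,w$.

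For the ``if'' direction, suppose $G$ is a connected induced subgraph of $Q_n$ that is median-closed. The first and main step is a lemma: \emph{such a $G$ is isometric in $Q_n$}, i.e. $d_G(u,v)=d_{Q_n}(u,v)$ for all $u,v\in V(G)$. I would prove this by induction on $k=d_{Q_n}(u,v)$; the case $k\le 1$ is immediate since $G$ is induced. For $k\ge 2$ it suffices to produce a neighbour $u'$ of $u$ in $G$ with $d_{Q_n}(u',v)=k-1$, after which induction closes up a geodesic inside $G$. To find $u'$ I would start from an arbitrary shortest $G$-path from $u$ to $v$ (which exists by connectivity) and use median-closure to rectify its first ``wrong'' step: whenever a step of the path flips a coordinate away from $v$, forming medians of suitable triples along the path together with $v$ yields a vertex of $G$ that shortcuts the detour, contradicting minimality unless the first step already heads toward $v$. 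Granting isometry, the conclusion is quick. Given $u,v,w\in V(G)$, their hypercube median $m=\mu(u,v,w)$ lies in $G$ by closure and lies on a $Q_n$-geodesic between each pair; by isometry these are $G$-geodesics, so $m$ is a median in $G$. Conversely, any median $m'$ of $u,v,w$ in $G$ lies on $G$-geodesics, hence on $Q_n$-geodesics, so $m'$ is a hypercube median and equals $m$ by uniqueness in $Q_n$. Thus $G$ has unique medians and is a median graph.

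For the ``only if'' direction, suppose $G$ is an abstract median graph and realise it inside a hypercube. I would first show $G$ is bipartite: a shortest odd cycle would contain three vertices whose unique median must lie on the cycle and create a chord, contradicting minimality. Next I would show $G$ is a \emph{partial cube}, i.e. that it embeds isometrically into some $Q_n$. By Winkler's theorem it suffices, for a connected bipartite graph, to show that the Djokovi\'c--Winkler relation $\Theta$ on edges is transitive; the isometric embedding is then the map sending each vertex to the vector recording, for each $\Theta$-class, on which side the vertex lies. Transitivity of $\Theta$ is where the median structure does the work: the quadrangle property together with the uniqueness of medians forces the ``parallelism'' of edges to be an equivalence relation. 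Finally I would check that the image is median-closed and induced. For $u,v,w\in V(G)$ the $G$-median $m_G$ exists and is unique; being on $G$-geodesics it is, by isometry, on $Q_n$-geodesics between each pair, hence a hypercube median, so $m_G=\mu(u,v,w)$ and $\mu(u,v,w)\in G$: median-closed. Induced-ness is automatic from isometry, since a pair of vertices of $G$ at $Q_n$-distance $1$ is at $G$-distance $1$ and therefore adjacent in $G$.

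I expect the two genuine obstacles to be, on the ``if'' side, the isometry lemma, specifically turning ``median-closed plus connected'' into a concrete geodesic-straightening argument, and, on the ``only if'' side, the transitivity of the relation $\Theta$, which is the structural heart of the statement that median graphs are partial cubes. Both reduce to careful bookkeeping with medians and the quadrangle property; the remaining transfers between $G$-distances and $Q_n$-distances are routine once isometry is in hand.
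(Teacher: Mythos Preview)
The paper does not prove this theorem at all: it is quoted as a known result of Mulder \cite{Mulder} and then applied as a black box to show that metallic cubes are median graphs. There is therefore no ``paper's own proof'' to compare your attempt against.

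Your outline is a reasonable sketch of how Mulder's characterisation is actually proved, and the overall architecture (isometry from median-closure on the ``if'' side, partial-cube embedding via transitivity of the Djokovi\'c--Winkler relation on the ``only if'' side) is the standard one. The one place where your sketch is noticeably thin is the isometry lemma: the sentence ``forming medians of suitable triples along the path together with $v$ yields a vertex of $G$ that shortcuts the detour'' is doing a lot of work and, as written, is not yet a proof. You would need to specify which triples and check that the resulting median really strictly shortens the path; the usual clean argument takes $u$, $v$, and the first vertex $x$ on a shortest $G$-path with $d_{Q_n}(x,v)<d_{Q_n}(u,v)$, and shows $\mu(u,x,v)$ is a $G$-neighbour of $u$ closer to $v$ in $Q_n$. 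Apart from that, your plan is sound, but be aware that for the purposes of this paper no proof is expected---the theorem is simply cited.
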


\begin{theorem} For all $a\geq 1$ and $n\geq 0$, $\Pi^{a}_n$ is a median graph. 
\end{theorem}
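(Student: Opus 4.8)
The plan is to invoke Mulder's theorem, so it suffices to show three things: (i) $\Pi^a_n$ is connected, (ii) $\Pi^a_n$ is an induced subgraph of a hypercube, and (iii) the hypercube-median of any three vertices of $\Pi^a_n$ again lies in $\Pi^a_n$. Part (ii) is already established by Theorem~\ref{tm:Pi_embedding}, via the block-expansion map $\sigma$ into a Fibonacci cube $\Gamma_N$, which in turn embeds isometrically in $Q_N$; connectedness in (i) follows immediately from the canonical decomposition of Theorem~\ref{candec} by an easy induction on $n$ (each copy of $\Pi^a_{n-1}$ is connected by induction, and consecutive copies, as well as the $\Pi^a_{n-2}$ copy, are joined by at least one edge). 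So the entire content of the proof is part (iii).

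First I would fix the embedding. Working with $\sigma\colon\Pi^a_n\to\Gamma_N\subseteq Q_N$ is awkward because $\sigma$ is not coordinatewise, so I would instead reason with the \emph{native} embedding of $\Pi^a_n$ into a hypercube: encode each letter $k\in\{0,1,\dots,a-1\}$ in a string coordinate by its \emph{unary/thermometer code} $1^k0^{a-1-k}$ (a block of $a-1$ bits), and encode the letter $a$ appearing in a block $0a$ appropriately so that the modified Hamming distance $\overline h$ becomes the ordinary Hamming distance on the image; one checks that $\overline h(\alpha,\beta)=\sum_i |\alpha_i-\beta_i|$ is exactly the number of bit-flips between thermometer codes, so this map is an isometric (hence induced) embedding into $Q_{n(a-1)+c}$ for a suitable constant $c$ handling the $0a$-blocks. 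Given three strings $\alpha,\beta,\gamma\in\mathcal S^a_n$, their hypercube median $\mu$ is the coordinatewise majority of the three bit-strings.

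Next comes the crux: I must show this majority string is itself the image of a legal string in $\mathcal S^a_n$. Two things can go wrong and both must be ruled out. (a) Within a single letter-block of $a-1$ bits, the majority of three thermometer codes $1^{k_1}0^{\cdots}, 1^{k_2}0^{\cdots}, 1^{k_3}0^{\cdots}$ must again be a thermometer code --- and indeed it is, being $1^{\mathrm{med}(k_1,k_2,k_3)}0^{\cdots}$, since a coordinate is $1$ in the majority exactly when at least two of the $k_j$ reach it, i.e. when the index is $\le$ the median. (b) The resulting letter-sequence must still obey the ``$a$ only inside a block $0a$'' constraint; this requires checking that in every position the majority letter equals $a$ if and only if the majority assigns $0$ immediately before it and the $0a$-pattern is consistent, which again reduces to a majority-respects-the-pattern computation using that each of $\alpha,\beta,\gamma$ individually satisfies the constraint. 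I expect step (b) --- the interaction of the majority operation with the positional constraint encoding the $0a$-blocks --- to be the main obstacle, since it is exactly the place where the combinatorics of the alphabet restriction enters; choosing the encoding of $0a$-blocks cleverly enough (so that ``forbidden'' bit-patterns are never produced as a coordinatewise majority of three ``allowed'' ones) is what makes the argument work, and I would state and prove a short lemma isolating this closure property before concluding by Mulder's theorem.
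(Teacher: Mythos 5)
Your overall strategy is exactly the paper's: invoke Mulder's theorem, so the work reduces to connectedness, an induced embedding into a hypercube, and closure under the coordinatewise majority (median) operation. The first two parts are fine. The problem is that you have deferred the entire mathematical content of the theorem to an unstated, unproved ``short lemma'': you never specify how the letter $a$ (equivalently, the block $0a$) is encoded, and you explicitly flag step (b) --- that the majority of three encoded legal strings never produces a forbidden pattern --- as ``the main obstacle'' without resolving it. Since everything else in the argument is routine, a proof that stops exactly at the point where the alphabet restriction interacts with the majority rule has a genuine gap. (A smaller issue: with $a-1$ bits per letter you cannot encode the letter $a$ at all, and the number of $0a$-blocks varies from string to string, so the target dimension cannot be $n(a-1)+c$ for a fixed constant $c$; you need a uniform number of bits per position, e.g.\ $a$ bits per letter.)

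The gap is, however, fillable, and in fact your own thermometer encoding closes it more cleanly than the paper's argument if you push it one step further. Encode every letter $k\in\{0,1,\dots,a\}$ of every position by $1^k0^{a-k}$ (length $a$), so that a string of $\mathcal{S}^a_n$ maps to a binary string of length $na$ and $\overline h$ becomes Hamming distance; the image is then an induced subgraph of $Q_{na}$. As you observe, the coordinatewise majority of three thermometer codes is the thermometer code of the median letter, so the hypercube median of $\sigma(\alpha),\sigma(\beta),\sigma(\gamma)$ is the image of the \emph{letterwise median} string $\mu$. Legality of $\mu$ is then a one-line check: $\mu_i=a$ forces at least two of $\alpha_i,\beta_i,\gamma_i$ to equal $a$ (since $a$ is the maximal letter), hence those two strings have $0$ in position $i-1$, hence $\mu_{i-1}=0$ (since $0$ is the minimal letter); and $\mu_1=a$ is impossible for the same reason. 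This is a genuinely different, and simpler, verification than the paper's, which uses the block map $\sigma$ of Theorem~\ref{tm:Pi_embedding} (each letter sent to a string of length $2a-2$, each $0a$-block to one of length $4a-4$) and a case analysis on triples of distinct blocks. Had you stated the encoding of $a$ and the two-sentence legality argument above, your proof would be complete; as written, it is a correct plan with its decisive step missing.
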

\begin{proof}
It is well known  that the median of the triple in $Q_n$ is obtained by the
majority rule. So let $\alpha=\epsilon_1\cdots\epsilon_n$,
$\beta=\delta_1\cdots\delta_n$ and $\gamma=\rho_1\cdots\rho_n$ be binary
strings, i.e., $\epsilon_i,\gamma_i,\rho_i\in\left\lbrace 0,1\right\rbrace$.
Then their median is $m=\zeta_i\cdots\zeta_n$, where $\zeta_i$ is equal to
the number that appears at least twice among the numbers $\epsilon_i$,
$\gamma_i$ and $\rho_i$.

Let $a\geq 3$ and let $\sigma$ be the map defined in the proof of
Theorem \ref{tm:Pi_embedding}. By the same theorem, $\Pi^a_n$ is a connected
induced subgraph of a $(2a-2)n$-cube. To finish our proof, we just need
to verify that the subgraph induced by the set
$\sigma\left(\mathcal{S}^a_n\right)$  is median closed, i.e., that for
every three vertices in $\sigma\left(\mathcal{S}^a_n\right)$, their median
is also a vertex in $\sigma\left(\mathcal{S}^a_n\right)$. Every string from 
the set $\sigma\left(\mathcal{S}^a_n\right)$ can be uniquely decomposed into
blocks $\sigma\left(j\right)$ with length $2a-2$, for $0\leq j\leq a-1$,
and block $\sigma\left(0a\right)$ with length $4a-4$. Note that the median
of three blocks, where two blocks are the same, is that block that appears
at least twice. Hence, we only need to consider cases where all three blocks
are different. First consider three blocks $\sigma\left(i\right)$,
$\sigma\left(j\right)$ and $\sigma\left(k\right)$. Without loss of
generality we can assume that $0\leq i<j<k\leq a-1$. Then their median is
$\sigma\left(j\right)$. In the second case we have $\sigma\left(i\right)$
and $\sigma\left(j\right)$ for $0\leq i\leq j\leq a-1$, and the second half
of the block $\sigma\left(0a\right)$, i.e. of the string $0010\cdots 0$ with
length $(2a-2)n$. But, since one comes only in even positions in strings
$\sigma\left(i\right)$ and $\sigma\left(j\right)$, their median is 
$\sigma\left(j\right)$. Also note that we do not have to consider a case
with the first half of the block $\sigma\left(0a\right)$, because it is
equal to the block $\sigma\left(a-1\right)$. So, we conclude that the
subgraph induced by set $\sigma\left(\mathcal{S}^a_n\right)$ is median closed.
The proof for $a=2$ is similar, so we omit the details.
\end{proof}

\section{The number of edges and distribution of degrees}

Now that we have elucidated the recursive structure of metallic cubes via
the canonical decomposition, the recurrences for the number of edges and
the details of degree distributions can be simply read off from our
structural results. We start with counting the edges of $\Pi^a_n$.

Let $e^a_n$ denote the number of edges in $\Pi^a_n$, i.e.,
$e^a_n=|E(\Pi^a_n)|$. Since $\Pi^a_0$ is empty graph, $e^a_0=0$.
The graph $\Pi^a_1$ is the path graph with $a$ vertices, so, we have
$e^a_1=a-1$. The graph $\Pi^a_2$ is an $a\times a$ grid with addition of
vertex $0a$ being adjacent to vertex $0(a-1)$. Hence, $e^a_2=a^2+1$.
For larger values of  $n$, the graph $\Pi^a_n$ consists of $a$ copies of
$\Pi^a_{n-1}$ and a single copy of $\Pi^a_{n-2}$, and those subgraphs
contribute with $a\cdot e^a_{n-1}+e^a_{n-2}$ edges. Furthermore, there are
$(a-1)\cdot s^a_{n-1}+s^a_{n-2}$ edges connecting $a$ subgraphs
$\Pi^a_{n-1}$ and one subgraph $\Pi^a_{n-2}$. Since
$s^a_n-s^a_{n-1}=(a-1)s^a_{n-1}+s^a_{n-2}$, the overall number of edges is
\begin{equation*}
e^a_n=a\cdot e^a_{n-1}+e^a_{n-2}+s^a_n-s^a_{n-1}.
\end{equation*}     
Hence, the number of edges satisfies a non-homogeneous linear recurrence of
length 2 with the same coefficients in the homogeneous part as the recurrence
for the number of vertices.
Table \ref{table:number of edges} shows some first few values of $e^a_n$.
With some care, patterns of the coefficients of the polynomials appearing
there can be analyzed, suggesting the explicit formula established in our
following theorem.
\begin{theorem}
The number of edges in graph $\Pi^a_n$ is
\begin{equation*}
e^a_n=\sum_{k=0}^n(-1)^{n+k}\ceil*{\frac{n+k}{2}}\binom{\floor*{\frac{n+k}{2}}}{k}a^k
\end{equation*} 
\end{theorem}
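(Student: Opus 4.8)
## Proof Proposal

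The plan is to solve the non-homogeneous recurrence
\[
e^a_n = a\,e^a_{n-1} + e^a_{n-2} + \bigl(s^a_n - s^a_{n-1}\bigr),
\qquad e^a_0 = 0,\ e^a_1 = a-1,
\]
established just above, by producing the claimed closed form and verifying it satisfies both the recurrence and the initial conditions. Since a linear recurrence of order $2$ with fixed coefficients and fixed initial data has a unique solution, this suffices. First I would record the generating-function identities we already have: from $s^a_n = a s^a_{n-1} + s^a_{n-2}$ with $s^a_0 = 1$, $s^a_1 = a$, the ordinary generating function is $S(x) = \sum_n s^a_n x^n = \dfrac{1}{1 - a x - x^2}$, and from $(\ref{eq:metallic identity})$ this unpacks into $s^a_n = \sum_{k\ge 0}\binom{n-k}{k}a^{n-2k}$. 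Multiplying the recurrence for $e^a_n$ by $x^n$ and summing, the term $s^a_n - s^a_{n-1}$ contributes $(1-x)S(x)$ up to a correction for small $n$; after bookkeeping of the initial terms one gets
\[
E(x) \;=\; \sum_{n\ge 0} e^a_n x^n \;=\; \frac{x\,(1-x)}{(1 - a x - x^2)^2}
\]
(the constant being pinned down by $e^a_0 = 0$, $e^a_1 = a-1$, $e^a_2 = a^2+1$, all of which should be checked against this rational function as a sanity step).

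The heart of the argument is then extracting the coefficient of $x^n$ in $\dfrac{x(1-x)}{(1-ax-x^2)^2}$. I would write $\dfrac{1}{(1-ax-x^2)^2} = \dfrac{d}{dt}\Big|_{t=1} \dots$ — more concretely, use $\dfrac{1}{(1-ax-x^2)^2} = \sum_{m\ge 0}(m+1)(ax+x^2)^m$ is \emph{not} quite right since $ax + x^2 \ne 1 - (1-ax-x^2)$ cleanly; instead use $\dfrac{1}{(1-u)^2} = \sum_{m\ge 0}(m+1)u^m$ with $u = ax + x^2$, giving $\dfrac{1}{(1-ax-x^2)^2} = \sum_{m\ge 0}(m+1)(ax+x^2)^m = \sum_{m\ge 0}(m+1)\sum_{j=0}^m \binom{m}{j}a^{m-j}x^{m+j}$. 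Collecting the coefficient of $x^N$ (setting $N = m+j$, so $j = N-m$ and $m$ ranges so that $0 \le N-m \le m$, i.e.\ $\lceil N/2\rceil \le m \le N$) yields
\[
[x^N]\frac{1}{(1-ax-x^2)^2} = \sum_{m=\lceil N/2\rceil}^{N}(m+1)\binom{m}{N-m}a^{2m-N}.
\]
Then $e^a_n = [x^n]\,x(1-x)E_0(x) = [x^{n-1}]E_0 - [x^{n-2}]E_0$ where $E_0 = (1-ax-x^2)^{-2}$, producing a difference of two such sums. The remaining task is purely algebraic: reindex by $k = 2m - N$ (the exponent of $a$), so in the $[x^{n-1}]$ sum $m = \frac{n-1+k}{2}$ and $m+1 = \frac{n+1+k}{2} = \lceil\frac{n+k}{2}\rceil$ when $n+k$ is odd, and similarly the $[x^{n-2}]$ sum contributes at the other parity, and show the combined coefficient of $a^k$ is exactly $(-1)^{n+k}\ceil*{\frac{n+k}{2}}\binom{\floor*{\frac{n+k}{2}}}{k}$. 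The sign $(-1)^{n+k}$ should emerge because only one of the two sums is nonzero for each $k$ (parity of $2m-N$ forces $k \equiv N \pmod 2$), and the subtraction $-[x^{n-2}]$ flips the sign on the other parity class.

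The main obstacle I anticipate is the careful parity/floor--ceiling bookkeeping in this last step: tracking which of the two binomial sums survives for a given $k$, matching the range constraint $\lceil N/2\rceil \le m \le N$ against $\floor*{\frac{n+k}{2}} \ge k$ (which is automatic for $k \le n$ and makes the binomial vanish otherwise, giving the stated summation range $k=0$ to $n$), and confirming the $m+1$ factor becomes $\ceil*{\frac{n+k}{2}}$ under both reindexings. An alternative, perhaps cleaner, route to the same end is a direct induction on $n$: assume the formula for $n-1$ and $n-2$, plug into $e^a_n = a e^a_{n-1} + e^a_{n-2} + s^a_n - s^a_{n-1}$, and verify the coefficient of each $a^k$ matches, using the Pascal-type identity $\binom{\floor*{\frac{n+k}{2}}}{k} = \binom{\floor*{\frac{n-2+k}{2}}}{k} + \binom{\floor*{\frac{n-1+(k-1)}{2}}}{k-1}$ together with the analogous identity for $\ceil*{\frac{n+k}{2}}$ and the known expansion $s^a_n - s^a_{n-1} = \sum_{k}\bigl[\binom{n-k}{k} - \text{shift}\bigr]a^{k}$ rewritten in the same floor/ceiling parametrization; I would present whichever of the two verifications turns out shorter, but I expect the generating-function extraction to be the more transparent one to write down.
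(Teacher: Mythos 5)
Your primary route---solving the recurrence $e^a_n=a\,e^a_{n-1}+e^a_{n-2}+s^a_n-s^a_{n-1}$ by generating functions and extracting coefficients---is a genuinely different strategy from the paper's, which instead verifies the closed form by induction on $n$: the paper expands $s^a_n-s^a_{n-1}=\sum_{k=0}^n(-1)^{n+k}\binom{\floor*{\frac{n+k}{2}}}{k}a^k$, shows from the inductive hypothesis that $a\,e^a_{n-1}+e^a_{n-2}=\sum_{k=0}^n(-1)^{n+k}\ceil*{\frac{n+k-2}{2}}\binom{\floor*{\frac{n+k}{2}}}{k}a^k$, and adds the two via $\ceil*{\frac{n+k-2}{2}}+1=\ceil*{\frac{n+k}{2}}$ (your fallback induction is essentially this). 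However, the generating function you write down is wrong, and not cosmetically so. From the recurrence (valid for $n\geq 2$) with $e^a_0=0$ and $e^a_1=a-1$ one gets
\begin{equation*}
E(x)\,(1-ax-x^2)=(1-x)S(x)-1=\frac{(a-1)x+x^2}{1-ax-x^2},\qquad\text{hence}\qquad E(x)=\frac{(a-1)x+x^2}{(1-ax-x^2)^2},
\end{equation*}
not $\frac{x(1-x)}{(1-ax-x^2)^2}$. The sanity check you yourself prescribe catches this: your function gives $[x^2]E=2a-1$, whereas $e^a_2=2a^2-2a+1$. (The in-text value ``$a^2+1$'' you quote is a typo in the paper; the $a\times a$ grid has $2a(a-1)$ edges, consistent with the paper's table and with the theorem's formula at $n=2$.)

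The error propagates into your final step, which is calibrated to the wrong numerator. Writing $c_N=[x^N](1-ax-x^2)^{-2}=\sum_m(m+1)\binom{m}{N-m}a^{2m-N}$, the correct identity is $e^a_n=(a-1)c_{n-1}+c_{n-2}$, not $c_{n-1}-c_{n-2}$. Since $c_{n-1}$ is supported on exponents $k$ with $n+k$ odd and $c_{n-2}$ on those with $n+k$ even, the coefficient of $a^k$ for $n+k$ even comes from $a\,c_{n-1}$ plus $c_{n-2}$ and equals $\frac{n+k}{2}\bigl[\binom{(n+k-2)/2}{k-1}+\binom{(n+k-2)/2}{k}\bigr]=\frac{n+k}{2}\binom{(n+k)/2}{k}$ by Pascal, while for $n+k$ odd it is $-\frac{n+k+1}{2}\binom{(n+k-1)/2}{k}$, coming entirely from the $-c_{n-1}$ part of $(a-1)c_{n-1}$; that, and not a subtracted $[x^{n-2}]$ term, is where the sign $(-1)^{n+k}$ originates. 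With the corrected numerator your computation closes and is arguably cleaner than the paper's induction; as literally proposed, the final coefficient matching would fail.
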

\begin{proof}
We proceed by induction on $n$. For $n=1$ and $n=2$ the statement holds.
Furthermore, the number of vertices $s_n$ satisfy the well-known identity
$s^a_n=\sum\limits_{k\geq 0}\binom{n-k}{k}a^{n-2k}$. Then we have
\begin{align*}
s^a_{n}-s^a_{n-1}&=\sum\limits_{k\geq 0}\binom{n-k}{k}a^{n-2k}-\sum\limits_{k\geq 0}\binom{n-k-1}{k}a^{n-2k-1}\\
&=\sum\limits_{k=0}^n(-1)^k\binom{n-\ceil*{\frac{k}{2}}}{\floor*{\frac{k}{2}}}a^{n-k}\\
&=\sum\limits_{k= 0}^n(-1)^k\binom{n-\ceil*{\frac{k}{2}}}{n-k}a^{n-k}\\
&=\sum\limits_{k=0}^n(-1)^{n+k}\binom{n-\ceil*{\frac{n-k}{2}}}{k}a^{k}\\
&=\sum\limits_{k=0}^n(-1)^{n+k}\binom{\floor*{\frac{n+k}{2}}}{k}a^{k}
\end{align*}
By using the inductive hypothesis, after adjusting indices and expanding the
 range of summation, we obtain
\begin{align*}
a\cdot e^a_{n-1}+e^a_{n-2}=&\sum_{k=0}^{n-1}(-1)^{n+k-1}\ceil*{\frac{n+k-1}{2}}\binom{\floor*{\frac{n+k-1}{2}}}{k}a^{k+1}+\\&+\sum_{k=0}^{n-2}(-1)^{n+k}\ceil*{\frac{n+k-2}{2}}\binom{\floor*{\frac{n+k-2}{2}}}{k}a^{k}\\
=&\sum_{k=1}^{n}(-1)^{n+k}\ceil*{\frac{n+k-2}{2}}\binom{\floor*{\frac{n+k-2}{2}}}{k-1}a^{k}+\\&+\sum_{k=0}^{n}(-1)^{n+k}\ceil*{\frac{n+k-2}{2}}\binom{\floor*{\frac{n+k-2}{2}}}{k}a^{k}\\
=&\sum_{k=0}^{n}(-1)^{n+k}\ceil*{\frac{n+k-2}{2}}\binom{\floor*{\frac{n+k}{2}}}{k}a^{k}
\end{align*}
Now, by using expressions for $s^a_{n}-s^a_{n-1}$ and
$a\cdot e^a_{n-1}+e^a_{n-2}$ our claim follows at once. \end{proof}

\begin{table}\centering$\begin{array}{r|l}
n & e^a_n\\
\hline
1 & a-1\\
2 & 2a^2-2a+1\\
3 & 3a^3-3a^2+4a-1\\
4 & 4a^4-4a^3+9a^2-6a+1\\
5 & 5a^5-5a^4+16a^3-12a^2+9a-1\\
\end{array}$\label{table:number of edges}\caption{Number of edges in $\Pi^a_n$.}
\end{table}

For Fibonacci cubes, Klav\v{z}ar \cite{Klavzar} proved that
$|E(\Gamma_{n})|=F_{n+1}+\sum_{k=1}^{n-2}F_kF_{n+1-k}$. By plugging in $a=1$
into our result and recalling that 
$\Pi^1_ {n}=\Gamma_{n-1}$, we have obtained a combinatorial proof of the
following identity.
\begin{corollary}
\begin{equation*}
\sum_{k=0}^n(-1)^{n+k}\ceil*{\frac{n+k}{2}}\binom{\floor*{\frac{n+k}{2}}}{k}=\sum_{k=0}^nF_kF_{n-k}.
\end{equation*}
\end{corollary}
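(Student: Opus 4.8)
The plan is to derive the identity by evaluating the edge-count theorem at $a=1$ and comparing it with Klav\v{z}ar's formula for $|E(\Gamma_n)|$, using the relation $\Pi^1_n=\Gamma_{n-1}$. First I would set $a=1$ in the theorem just proved, which gives
\[
|E(\Pi^1_n)|=\sum_{k=0}^n(-1)^{n+k}\ceil*{\tfrac{n+k}{2}}\binom{\floor*{\tfrac{n+k}{2}}}{k},
\]
so the left-hand side of the corollary is exactly $|E(\Gamma_{n-1})|$. By Klav\v{z}ar's result, $|E(\Gamma_m)|=F_{m+1}+\sum_{k=1}^{m-2}F_kF_{m+1-k}$; substituting $m=n-1$ gives $|E(\Gamma_{n-1})|=F_n+\sum_{k=1}^{n-3}F_kF_{n-k}$.

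The remaining work is then purely to reconcile this with the stated right-hand side $\sum_{k=0}^n F_kF_{n-k}$. Here I would use the standard conventions $F_0=0$, $F_1=1$ (so that the shifted Fibonacci identity $s^1_n=F_{n+2}$ is consistent with the earlier parts of the paper), which lets me drop the $k=0$ and $k=n$ terms from $\sum_{k=0}^n F_kF_{n-k}$ since each contains a factor $F_0=0$. Thus $\sum_{k=0}^n F_kF_{n-k}=\sum_{k=1}^{n-1}F_kF_{n-k}$. Splitting off $k=1$ and $k=n-1$, which each contribute $F_1F_{n-1}=F_{n-1}$, leaves $\sum_{k=1}^{n-1}F_kF_{n-k}=2F_{n-1}+\sum_{k=2}^{n-2}F_kF_{n-k}$. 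Meanwhile, in Klav\v{z}ar's expression, reindexing $\sum_{k=1}^{n-3}F_kF_{n-k}$ and peeling off its $k=1$ term gives $F_{n-1}+\sum_{k=2}^{n-3}F_kF_{n-k}$, and one checks the tail terms match after accounting for the symmetry $F_kF_{n-k}=F_{n-k}F_k$; finally $F_n+F_{n-1}=F_{n+1}$-type collapses and the convolution identity $\sum_{k=0}^n F_kF_{n-k}=\frac{(n-1)F_n+2nF_{n-1}}{5}$ (or simply direct bookkeeping) close the gap. In practice the cleanest route is to invoke the well-known closed form for the Fibonacci self-convolution and verify that Klav\v{z}ar's $F_n+\sum_{k=1}^{n-3}F_kF_{n-k}$ equals it; this is a short induction or a generating-function check, since $\sum_{n\ge0}\big(\sum_{k=0}^n F_kF_{n-k}\big)x^n=\big(\frac{x}{1-x-x^2}\big)^2$.

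The main obstacle is bookkeeping with the summation bounds and the Fibonacci index conventions: Klav\v{z}ar's formula is stated with a different offset than the right-hand side of the corollary, and the paper's own convention $\Pi^1_n=\Gamma_{n-1}$ together with $s^1_n=F_{n+2}$ must be tracked carefully so that the $F_0=0$ terms are correctly inserted or removed. Once the index alignment is pinned down, no genuinely hard step remains — the combinatorial content (that the alternating-binomial sum counts edges of a Fibonacci cube) is already supplied by the edge-count theorem, so the corollary is essentially immediate modulo this translation. I would therefore present it as a one-paragraph argument: evaluate at $a=1$, cite Klav\v{z}ar, and reconcile the two Fibonacci-convolution expressions using the standard generating function $\big(x/(1-x-x^2)\big)^2$.
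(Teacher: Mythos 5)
Your proposal is correct and follows exactly the paper's route: set $a=1$ in the edge-count theorem, use $\Pi^1_n=\Gamma_{n-1}$ together with Klav\v{z}ar's formula, and reconcile the two convolution expressions (the paper in fact leaves that last reconciliation implicit). The only cosmetic remark is that the cleanest bookkeeping is to note that the full convolution $\sum_{k=1}^{n-1}F_kF_{n-k}$ exceeds $\sum_{k=1}^{n-3}F_kF_{n-k}$ by the two boundary terms $F_{n-2}F_2+F_{n-1}F_1=F_n$, which is precisely the standalone $F_n$ in Klav\v{z}ar's expression.
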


Now we turn our attention to degrees of vertices of metallic cubes and to
their distribution. We consider first the case $a\geq 3$.
Hence, let $a\geq 3$ and $v=\alpha_1\cdots\alpha_n\in\Pi^a_n$ be any vertex.
If $0<\alpha_i<a-1$ for all $i$, then $v$ has maximum possible degree, which
is $2n$. Each $0a$ block reduces the degree by $3$, each $0(a-1)$-block
reduces the degree by $1$, as well as each letter $0$ or $a-1$ which is not
part of some $0(a-1)$-block. We are interested in the number of vertices with
degree $2n-m$. So let $v$ be a vertex containing $l$ $0a$-blocks, $h$
$0(a-1)$-blocks and $k$ letters $0$ or $a-1$, where none of $k$ letters $0$
and $a-1$ are part of $0(a-1)$-block. Then $v$ has degree $2n-3l-h-k$.
Now we want to count the number of such vertices. There are
$\binom{n-h-l}{h}\binom{n-2h-l}{l}\binom{n-2h-2l}{k}$ ways to choose
positions of $0a$-blocks, $0(a-1)$-blocks and letters $0$ or $a-1$. The
rest of $n-2h-2l-k$ positions we can fill in $(a-2)^{n-2h-2l-k}$ ways.
If we set that there are $2^k$ ways to fill $k$ positions with $0$ or $a-1$,
we possibly obtained more that $l$ $0(a-1)$-blocks and counted some vertices
more than once. Let
$q(n,l,h,k)=q(l,k)=\binom{n-h-l}{h}\binom{n-2h-l}{l}\binom{n-2h-2l}{k}2^k(a-2)^{n-2h-2l-k}$
denote the number of vertices where all vertices with exactly $l$ blocks
are counted once, and vertices with more then $l$ blocks are counted more
then once. More precisely, beside vertices of length $l$, number $q(l)$
counts all vertices with $l+1, l+2, \dots l+\floor*{\frac{k}{2}}$ blocks
$0(a-1)$, where each vertex with exactly $l+j$ blocks is counted
$\binom{l+j}{l}$ times. If $p(n,l,h,k)=p(l,k)$ denotes the number of
vertices with precisely $l$ blocks $0(a-1)$, we have
\begin{align*}
q(l,k)=\sum\limits_{j=0}^{\floor*{\frac{k}{2}}}\binom{l+j}{l}p(l+j,k-2j).
\end{align*} 

Now we retrieve the numbers $p(l,k)$ by direct counting. To obtain the number
of vertices with exactly $l$ blocks, we have to subtract the number of
vertices with more than $l$ blocks. Vertices having $l+1$ block were
counted $\binom{l+1}{l}$ times in $q(l,k)$ and subtracting
$q(l,k)-q(l+1,k-2)\binom{l+1}{l}$ corrects that error. But then the vertices
containing $l+2$ blocks were counted
$\binom{l+2}{l}-\binom{l+2}{l+1}\binom{l+1}{l}=\binom{l+2}{l}$ times so far,
so we need to add $\binom{l+2}{l}q(l+2,k-4)$ to cancel that error.
Inductively, vertex with $l+j$ blocks is counted
\begin{align*}
\binom{l+j}{j}-\sum\limits_{s=0}^{j-1}\binom{l+j}{l+s}\binom{l+s}{l}&=\binom{l+j}{j}-\binom{l+j}{l}\sum\limits_{s=0}^{j-1}\binom{s}{j}\\
&=\begin{cases} 0 & \textup{ for } j \textup{ odd} \\ -2\binom{l+j}{j} & \textup{ for } j \textup{ even} \end{cases}
\end{align*} 
times, and there are $q(l+j,k-2j)$ such vertices. So we conclude that the
number of vertices that have exactly $l$ blocks $0(a-1)$ is
\begin{align}\label{eq:Delta(n,2n-m)}
p(l,k)=\sum_{j=0}^{\floor*{\frac{k}{2}}}(-1)^j \binom{l+j}{j}q(l+j,k-2j).
\end{align}

Let $\Delta_{n,m}$ denotes the overall number of vertices of degree $m$.
Then we have
\begin{align*}
\Delta_{n,2n-m}&=\sum_{\substack{3h+l+k=m \\ 2h+2l+k\leq n \\l,h,k\geq 0}} p(n,h,l,k).
\end{align*}
The maximum degree is achieved for $m=0$. Then formula
(\ref{eq:Delta(n,2n-m)}) reduces itself to $\Delta_{n,2n}=(a-2)^n$. The minimum
degree is achieved for the largest possible number of $0a$-blocks, which
reduce degree the most, hence $l=\floor*{\frac{n}{2}}$ and
$k=n-2\floor*{\frac{n}{2}}$. Thus $m=n+\floor*{\frac{n}{2}}$. The formula
for $\Delta_{n,\ceil*{\frac{n}{2}}}$ reduces to
\begin{align*}
\Delta_{n,\ceil*{\frac{n}{2}}}&=q\left(n,\floor*{\frac{n}{2}},0,n-2\floor*{\frac{n}{2}}\right)\\&=\left(\binom{n-\floor*{\frac{n}{2}}}{\floor*{\frac{n}{2}}}2^{n-2\floor*{\frac{n}{2}}}\right)(a-2)^{0}\\
&=\begin{cases}1&\textup{for } n \textup{ even;}\\ n+1 &\textup{for } n \textup{ odd.} \end{cases}
\end{align*}
Figure \ref{fig:Distribution of degree} shows degree of each vertex in
graphs $\Pi^3_1$, $\Pi^3_2$ and $\Pi^3_3$.

\begin{figure}[h!] \centering
\begin{tikzpicture}[scale=0.75]
\tikzmath{\x1 = 0.35; \y1 =-0.05; \z1=180; \w1=0.2; \xs=-8; \ys=0; \yss=-5;
\x2 = \x1 + 1; \y2 =\y1 +3; } 
\small
\node [label={[label distance=\y1 cm]\z1: $3$},circle,fill=blue,draw=black,scale=\x1](A1) at (0,4) {};
\node [label={[label distance=\y1 cm]\z1: $4$},circle,fill=blue,draw=black,scale=\x1](A2) at (0,5) {};
\node [label={[label distance=\y1 cm]\z1: $4$},circle,fill=blue,draw=black,scale=\x1](A3) at (0,6) {};
\node [label={[label distance=\y1 cm]\z1: $4$},circle,fill=blue,draw=black,scale=\x1](A4) at (-1.5,2.5) {};
\node [label={[label distance=\y1 cm]\z1: $5$},circle,fill=blue,draw=black,scale=\x1](A5) at (-1.5,3.5) {};
\node [label={[label distance=\y1 cm]\z1: $5$},circle,fill=blue,draw=black,scale=\x1](A6) at (-1.5,4.5) {};
\node [label={[label distance=\y1 cm]\z1: $4$},circle,fill=blue,draw=black,scale=\x1](A7) at (-3,1) {};
\node [label={[label distance=\y1 cm]\z1: $4$},circle,fill=blue,draw=black,scale=\x1](A8) at (-3,2) {};
\node [label={[label distance=\y1 cm]\z1: $4$},circle,fill=blue,draw=black,scale=\x1](A9) at (-3,3) {};
\node [label={[label distance=\y1 cm]\z1: $4$},circle,fill=blue,draw=black,scale=\x1](A10) at (1.5,2.5) {};
\node [label={[label distance=\y1 cm]\z1: $5$},circle,fill=blue,draw=black,scale=\x1](A11) at (1.5,3.5) {};
\node [label={[label distance=\y1 cm]\z1: $4$},circle,fill=blue,draw=black,scale=\x1](A12) at (1.5,4.5) {};
\node [label={[label distance=\y1 cm]\z1: $5$},circle,fill=blue,draw=black,scale=\x1](A13) at (0,1) {};
\node [label={[label distance=\y1 cm]\z1: $6$},circle,fill=blue,draw=black,scale=\x1](A14) at (0,2) {};
\node [label={[label distance=\y1 cm]\z1: $5$},circle,fill=blue,draw=black,scale=\x1](A15) at (0,3) {};
\node [label={[label distance=\y1 cm]\z1: $5$},circle,fill=blue,draw=black,scale=\x1](A16) at (-1.5,-0.5) {};
\node [label={[label distance=\y1 cm]\z1: $5$},circle,fill=blue,draw=black,scale=\x1](A17) at (-1.5,0.5) {};
\node [label={[label distance=\y1 cm]\z1: $4$},circle,fill=blue,draw=black,scale=\x1](A18) at (-1.5,1.5) {};
\node [label={[label distance=\y1 cm]\z1: $3$},circle,fill=blue,draw=black,scale=\x1](A19) at (3,1) {};
\node [label={[label distance=\y1 cm]\z1: $4$},circle,fill=blue,draw=black,scale=\x1](A20) at (3,2) {};
\node [label={[label distance=\y1 cm]\z1: $3$},circle,fill=blue,draw=black,scale=\x1](A21) at (3,3) {};
\node [label={[label distance=\y1 cm]\z1: $4$},circle,fill=blue,draw=black,scale=\x1](A22) at (1.5,-0.5) {};
\node [label={[label distance=\y1 cm]\z1: $5$},circle,fill=blue,draw=black,scale=\x1](A23) at (1.5,0.5) {};
\node [label={[label distance=\y1 cm]\z1: $4$},circle,fill=blue,draw=black,scale=\x1](A24) at (1.5,1.5) {};
\node [label={[label distance=\y1 cm]\z1: $4$},circle,fill=blue,draw=black,scale=\x1](A25) at (0,-2) {};
\node [label={[label distance=\y1 cm]\z1: $4$},circle,fill=blue,draw=black,scale=\x1](A26) at (0,-1) {};
\node [label={[label distance=\y1 cm]\z1: $3$},circle,fill=blue,draw=black,scale=\x1](A27) at (0,0) {};
\node [label={[label distance=\y1 cm]\z1: $2$},circle,fill=blue,draw=black,scale=\x1](A28) at (-4.5,-0.5) {};
\node [label={[label distance=\y1 cm]\z1: $3$},circle,fill=blue,draw=black,scale=\x1](A29) at (-3,-2) {};
\node [label={[label distance=\y1 cm]\z1: $2$},circle,fill=blue,draw=black,scale=\x1](A30) at (-1.5,-3.5) {};
\node [label={[label distance=\y1 cm]\z1: $2$},circle,fill=blue,draw=black,scale=\x1](A31) at (-1.5,7.5) {};
\node [label={[label distance=\y1 cm]\z1: $3$},circle,fill=blue,draw=black,scale=\x1](A32) at (-3,6) {};
\node [label={[label distance=\y1 cm]\z1: $2$},circle,fill=blue,draw=black,scale=\x1](A33) at (-4.5,4.5) {};

\draw [line width=\w1 mm] (A1)--(A2)--(A3) (A4)--(A5)--(A6) (A7)--(A8)--(A9) (A10)--(A11)--(A12) (A13)--(A14)--(A15) (A16)--(A17)--(A18) (A19)--(A20)--(A21) (A22)--(A23)--(A24) (A25)--(A26)--(A27) (A1)--(A4)--(A7)--(A28) (A2)--(A5)--(A8) (A3)--(A6)--(A9) (A10)--(A13)--(A16)--(A29) (A11)--(A14)--(A17) (A12)--(A15)--(A18) (A19)--(A22)--(A25)--(A30) (A20)--(A23)--(A26) (A21)--(A24)--(A27) (A1)--(A10)--(A19) (A2)--(A11)--(A20) (A31)--(A3)--(A12)--(A21) (A4)--(A13)--(A22) (A5)--(A14)--(A23) (A32)--(A6)--(A15)--(A24) (A7)--(A16)--(A25) (A8)--(A17)--(A26) (A33)--(A9)--(A18)--(A27)  (A28)--(A29)--(A30) (A31)--(A32)--(A33);

\node [label={[label distance=\y1 cm]\z1: $2$},circle,fill=blue,draw=black,scale=\x1](B1) at (0+\xs,4+\ys) {};
\node [label={[label distance=\y1 cm]\z1: $3$},circle,fill=blue,draw=black,scale=\x1](B2) at (0+\xs,5+\ys) {};
\node [label={[label distance=\y1 cm]\z1: $2$},circle,fill=blue,draw=black,scale=\x1](B3) at (0+\xs,6+\ys) {};
\node [label={[label distance=\y1 cm]\z1: $3$},circle,fill=blue,draw=black,scale=\x1](B4) at (-1.5+\xs,2.5+\ys) {};
\node [label={[label distance=\y1 cm]\z1: $4$},circle,fill=blue,draw=black,scale=\x1](B5) at (-1.5+\xs,3.5+\ys) {};
\node [label={[label distance=\y1 cm]\z1: $3$},circle,fill=blue,draw=black,scale=\x1](B6) at (-1.5+\xs,4.5+\ys) {};
\node [label={[label distance=\y1 cm]\z1: $3$},circle,fill=blue,draw=black,scale=\x1](B7) at (-3+\xs,1+\ys) {};
\node [label={[label distance=\y1 cm]\z1: $3$},circle,fill=blue,draw=black,scale=\x1](B8) at (-3+\xs,2+\ys) {};
\node [label={[label distance=\y1 cm]\z1: $2$},circle,fill=blue,draw=black,scale=\x1](B9) at (-3+\xs,3+\ys) {};
\node [label={[label distance=\y1 cm]\z1: $1$},circle,fill=blue,draw=black,scale=\x1](B28) at (-4.5+\xs,-0.5+\ys) {};

\draw [line width=\w1 mm] (B1)--(B2)--(B3) (B4)--(B5)--(B6) (B7)--(B8)--(B9)  (B1)--(B4)--(B7)--(B28) (B2)--(B5)--(B8) (B3)--(B6)--(B9); 

\node [label={[label distance=\y1 cm]\z1: $1$},circle,fill=blue,draw=black,scale=\x1](C1) at (0+\xs,4+\yss) {};
\node [label={[label distance=\y1 cm]\z1: $2$},circle,fill=blue,draw=black,scale=\x1](C4) at (-1.5+\xs,2.5+\yss) {};
\node [label={[label distance=\y1 cm]\z1: $1$},circle,fill=blue,draw=black,scale=\x1](C7) at (-3+\xs,1+\yss) {};

\draw [line width=\w1 mm] (C1)--(C4)--(C7); 

\end{tikzpicture} 
\caption{Graph $\Pi_n^{3}$ for $n=1,2,3$ with degree of each vertex.} \label{fig:Distribution of degree}
\end{figure}
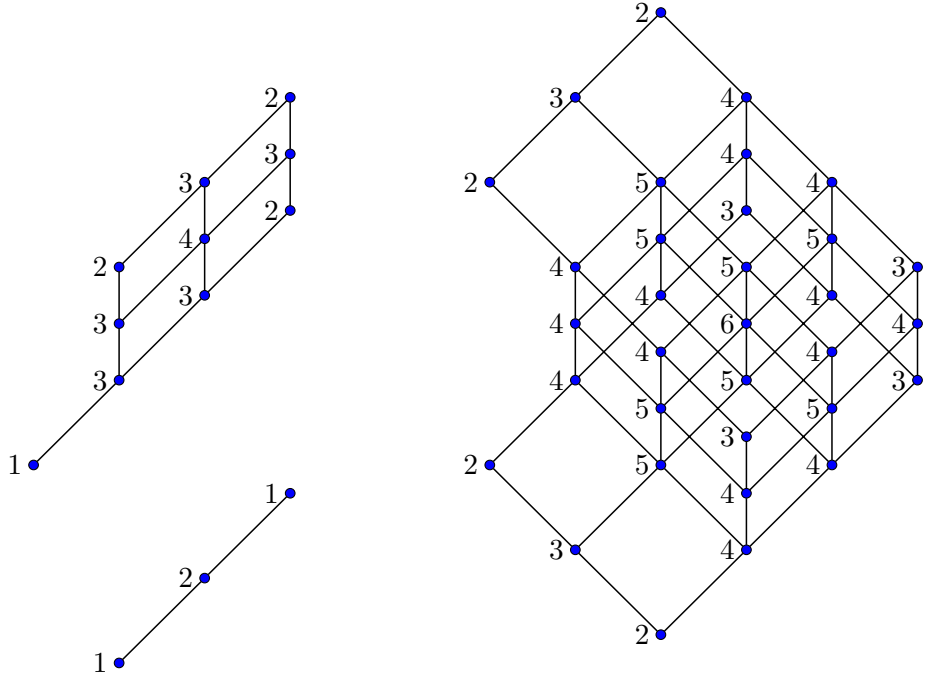

It is now time to consider the cases $a=1$ and $a=2$. Their separate 
treatment is necessary since, for $a=2$, there are no letters beside $0,a-1$
and $a$ and for $a=1$ we only have $0$ and $a$. For $a=2$ we have
$k=n-2h-2l$ and  $q(n,h,l,k)$ becomes
$$q(n,h,l)=\binom{n-h-l}{h}\binom{n-2h-l}{l}2^{n-2h-2l},$$ and the rest of
the argument is the same. The maximum degree is now obtained by setting
$l=\floor*{\frac{n}{2}}$ and $h=0$. The minimum degree is obtained by
setting $h=\floor*{\frac{n}{2}}$ and $l=0$. Since both cases produce the
same number of vertices, we have  
\begin{align*}
\Delta_{n,\ceil*{\frac{n}{2}}}=\Delta_{n,n+\floor*{\frac{n}{2}}}&=q\left(n,0,\floor*{\frac{n}{2}}\right)\\&=\left(\binom{n-\floor*{\frac{n}{2}}}{\floor*{\frac{n}{2}}}2^{n-2\floor*{\frac{n}{2}}}\right)\\
&=\begin{cases}1&\textup{for } n \textup{ even;}\\ n+1 &\textup{for } n \textup{ odd.} \end{cases}
\end{align*}

For $a=1$, we would need to modify our approach, but we omit the details
since distribution of degrees for Fibonacci cubes is already available in the
literature \cite{degree}.

We conclude this section by computing the bivariate generating functions for
two-indexed sequences $\Delta_{n,k}$ counting the number of vertices in
$\Pi_n^a$ with exactly $k$ neighbors. We suppress $a$ in order to simplify
the notation. Take $a\geq 2$. Recall that $\mathcal{T}_a$ denotes the set of
all words from alphabet $\left\lbrace 0,1,2,\dots,a-1,a\right\rbrace$ with
property that letter $a$ can only appear immediately after $0$. Every letter
starts with $0,\dots a-1$ or with the block $0a$. Hence, set $\mathcal{T}_a$
can be decomposed into disjoint subsets based on starting letters. We write
$$\mathcal{T}_a=\epsilon+0\mathcal{T}_a+1\mathcal{T}_a+\cdots+(a-1)\mathcal{T}_a+0a\mathcal{T}_a,$$
where $\epsilon$ denotes the empty word. Let
$\Delta(x,y)=\sum\limits_{n,k}\Delta_{n,k}x^ny^k$ be a formal power series
where $\Delta_{n,k}$ is the number of vertices of length $n$ having $k$
neighbors. Let $\mathcal{T}^0_a$ contain all vertices that start with $0$,
but not with $0a$-block and let $\Delta^0(x,y)$ denote corresponding formal
series. Then we have
\begin{align*}
\Delta(x,y)&=1+\Delta^0(x,y)+(a-2)xy^2\Delta(x,y)+xy\Delta(x,y) +x^2y\Delta(x,y)\\
\Delta^0(x,y)&=xy\left(1+\Delta^0(x,y)+(a-1)xy^2\Delta(x,y)+x^2y\Delta(x,y)\right)
\end{align*}  
Note that adding $0$ at the beginning of a vertex increases the number of
neighbors by one, except when the vertex starts with $a-1$, in which case
adding $0$ increases the number of neighbors by two. By solving the above
system of equations we readily obtain the generating function:
$$\Delta(x,y)=\dfrac{1}{1-(2y+(a-2)y^2)x-(y-y^2+y^3)x^2}.$$
For the reader's convenience, we list some first few values of
$\Delta_{n,k}$ in Table \ref{table:distribution of degrees}.
It is interesting to observe that their rows are not unimodal. Also, they
do not (yet) appear in the {\em On-Line Encyclopedia of Integer Sequences}
\cite{oeis}.
\begin{table}[h]\centering
$\begin{array}{cl}
 a=2&
\begin{array}{c|ccccccc}
n\backslash k & 1 & 2  & 3 & 4 & 5 & 6 & 7\\ 
\hline
1 &  2 & 0 & 0 & 0 & 0 & 0 & 0 \\
2 & 1 & 3 & 1 & 0 & 0 & 0 & 0 \\
3 &  0 & 4 & 4 & 4 & 0 & 0 & 0 \\
4 & 0 & 1 & 10 & 7 & 10 & 1 & 0 \\
5 & 0 & 0 & 6 & 20 & 18 & 20 & 6 \\
\end{array}\\ [10ex]
 a=3 &
\begin{array}{c|cccccccccc}
n\backslash k & 1 & 2  & 3 & 4 & 5 & 6 & 7 & 8 & 9 & 10\\ 
\hline
1 &  2 & 1 & 0 & 0 & 0 & 0 & 0 & 0 & 0 & 0\\
2 & 1 & 3 & 5 & 1 & 0 & 0 & 0 & 0 & 0 & 0\\
3 &  0 & 4 & 6 & 14 & 8 & 1 & 0 & 0 & 0 & 0\\
4 & 0 & 1 & 10 & 19 & 33 & 34 & 11 & 1  & 0 & 0 \\
5 & 0 & 0 & 6 & 23 & 60 & 85 & 108 & 63 & 14 &  1 \\
\end{array}
\end{array}$
\caption{Some first values of $\Delta_{n,k}$ for $a=2$ and $a=3$.} \label{table:distribution of degrees}
\end{table}

The obtained bivariate generating functions could be further used to obtain
the average values and higher order moments of the degree distributions, but 
we leave the details to the interested reader.

\section{Metric properties}

In this section we look at some metric-related properties of metallic cubes.
In particular, for a given metallic cube $\Pi_n^a$ we compute its radius and
diameter and determine its center and periphery as functions of $a$ and $n$.

A path of length $n$ in graph is a sequence of $n$ edges $e_1,e_2,\cdots, e_n$
such that $e_i$ and $e_{i+1}$ have common vertex for every $1\leq i \leq n-1$.
The distance between any two vertices $v_1,v_2\in V(G)$, denoted by
$d(v_1,v_2)$, is the length of any shortest path between them. By
{\em eccentricity} $e(v)$ of a vertex $v\in V(G)$ we mean the distance from
$v$ to any vertex farthest from it. More precisely,
$e(v)=\max_{w\in V(G)}d(v,w)$. The {\em radius} of $G$, denoted by $r(G)$,
is the minimum eccentricity of the vertices, and {\em diameter} of $G$,
denoted by $d(G)$, is the maximum eccentricity over all vertices of $G$. A 
vertex $v$ is {\em central} if $e(v)=r(G)$. The subset
$Z(G)=\left\lbrace v\in V(G): e(v)=r(G)\right\rbrace\subset V(G)$ is called 
the {\em center} of $G$, while the subset
$P(G)=\left\lbrace v\in V(G): e(v)=r(G)\right\rbrace\subset V(G)$ is called 
the {\em periphery} of $G$. In the next few theorems, we determine radius,
center, diameter and periphery of metallic cubes. 

\begin{remark} Let $v=\alpha_1\cdots\alpha_n\in V(\Pi^a_n)$ be an arbitrary
vertex. To reach a vertex most distanced from $v$, every letter in $v$ greater
than $\floor*{\frac{a}{2}}$ must be changed into $0$. Any letter smaller
than $\floor*{\frac{a}{2}}$ must be changed to $a-1$ or, if the letter before
it was changed to $0$, into $a$. A letter equal to $\floor*{\frac{a}{2}}$
must be changed to $0$ unless the letter before it was changed into $0$,
in which case it must be changed into $a$.\end{remark}

\begin{theorem}\label{tm:radius} For $a\geq 1$ and $n\geq 1$ we have
\begin{align}\label{eq:radius}
r(\Pi^a_n)=\floor*{\frac{a}{2}}\ceil*{\frac{n}{2}}+\ceil*{\frac{a}{2}}\floor*{\frac{n}{2}}.
\end{align}
\end{theorem}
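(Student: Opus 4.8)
The plan is to compute $r(\Pi^a_n)$ in two halves: first establish that the eccentricity of the all-zeros vertex $\mathbf{0}=00\cdots 0$ equals the claimed quantity, and then argue that no vertex does better, so that this value is in fact the radius. For the first half I would use the Remark immediately preceding the statement as a blueprint for the optimal ``antipodal'' strategy. Starting from $\mathbf{0}$, every letter is smaller than $\lfloor a/2\rfloor$ (assuming $a\ge 2$; the case $a=1$ is handled separately since then $\Pi^1_n=\Gamma_{n-1}$ and the formula reduces to $\lceil (n-1)/2\rceil$ plus a boundary check, which one verifies directly against the known radius of Fibonacci cubes). So to reach a farthest vertex from $\mathbf 0$ we want to turn each letter into $a-1$, except that whenever the preceding letter has been turned into $0$ we may instead push the current letter all the way to $a$, at cost $a$ rather than $a-1$ but consuming the slot in a way that forces the next letter back down. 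The key observation is that the block $0a$ costs $0+a=a$ over two positions, i.e.\ average $a/2$ per position, whereas two independent letters each raised to $a-1$ cost $2(a-1)$ over two positions, average $a-1$; since $a-1\ge a/2$ for $a\ge 2$, using a single letter $a-1$ is never worse, and in fact one shows the genuinely optimal pattern alternates: on odd-indexed positions raise to $a-1$ (contributing $\lceil n/2\rceil$ letters, but here I must be careful about which parity is cheaper depending on whether one uses $0a$-blocks). Working this out, the maximum of $\sum|\alpha_k-\beta_k|$ over admissible $\beta$ with $\alpha=\mathbf 0$ comes out to exactly $\lfloor a/2\rfloor\lceil n/2\rceil+\lceil a/2\rceil\lfloor n/2\rfloor$ — the asymmetry between $\lfloor a/2\rfloor$ and $\lceil a/2\rceil$ reflecting that the per-position contribution when a $0$ is forced below a raised-to-$a$ letter differs from the free contribution, with the counts split as evenly as possible between the $\lceil n/2\rceil$ and $\lfloor n/2\rfloor$ positions of the two parities.

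For the lower-bound half — showing $r(\Pi^a_n)$ is not smaller than this — I would prove that \emph{every} vertex has eccentricity at least the claimed value. Given an arbitrary $v=\alpha_1\cdots\alpha_n$, invoke the Remark to construct an explicit farthest vertex $w$: change each $\alpha_i>\lfloor a/2\rfloor$ to $0$, each $\alpha_i<\lfloor a/2\rfloor$ to $a-1$ (or to $a$ if the preceding letter became $0$), and each $\alpha_i=\lfloor a/2\rfloor$ to $0$ (or to $a$ if the preceding letter became $0$); then $d(v,w)=\bar h(v,w)$ is a sum of terms $|\alpha_i-w_i|$, each of which is at least $\lfloor a/2\rfloor$ or $\lceil a/2\rceil$ according to the parity bookkeeping above, regardless of what $v$ was. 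Summing, $e(v)\ge \lfloor a/2\rfloor\lceil n/2\rceil+\lceil a/2\rceil\lfloor n/2\rfloor$ for every $v$, hence the minimum eccentricity — the radius — is at least this, and combined with the exact computation for $\mathbf 0$ we get equality. (One should double-check that $d(v,w)$ really equals $\bar h(v,w)$ and not something larger; this follows from metallic cubes being partial cubes / isometric subgraphs of hypercubes under the embedding $\sigma$ of Theorem \ref{tm:Pi_embedding}, since in a hypercube the distance is exactly the Hamming distance and $\bar h$ translates to Hamming distance of the images — though one must confirm $\sigma$ is isometric, not merely adjacency-preserving, which I would note is where the earlier median-graph / partial-cube discussion is being used.)

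The main obstacle I anticipate is the parity bookkeeping in the farthest-vertex construction: the greedy ``raise to $a-1$, or to $a$ if a $0$ precedes'' rule interacts with the constraint that $a$ may only follow a $0$, and one must show that the optimal assignment of $0a$-blocks versus isolated $(a-1)$-letters really does distribute the $n$ positions into $\lceil n/2\rceil$ and $\lfloor n/2\rfloor$ classes with weights $\lfloor a/2\rfloor$ and $\lceil a/2\rceil$, and that no cleverer interleaving beats it. This is essentially an optimization over tilings of a length-$n$ strip by monominoes and dominoes with position-dependent weights, and while it is elementary, getting the floors and ceilings to land exactly as in \eqref{eq:radius} — and checking the small cases $n=1,2$ and the degenerate $a=1,2$ against the tables and figures already in the paper — is the delicate part. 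A clean way to organize it is to treat $n$ even and $n$ odd separately and, within each, $a$ even and $a$ odd, reducing to four short verifications.
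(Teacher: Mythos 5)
There is a genuine gap, and it is at the anchor of your upper-bound half: the all-zeros vertex is not a central vertex for $a\geq 3$. From $\mathbf{0}=00\cdots 0$ you are maximizing $\sum_k \beta_k$ over admissible $\beta$, and by your own observation a $0a$-block contributes $a$ over two positions while two isolated letters $a-1$ contribute $2(a-1)\geq a$; hence the farthest vertex from $\mathbf 0$ is $(a-1)(a-1)\cdots(a-1)$ and $e(\mathbf 0)=n(a-1)$, which strictly exceeds $\lfloor a/2\rfloor\lceil n/2\rceil+\lceil a/2\rceil\lfloor n/2\rfloor\approx na/2$ as soon as $a\geq 3$. (You can see this in the paper's Figure of eccentricities for $\Pi^3_3$: the vertex $000$ has eccentricity $6$ while the radius is $4$, attained at $111$.) The paper instead takes the central vertex to be $\hat\epsilon=\epsilon\epsilon\cdots\epsilon$ with $\epsilon=\lfloor a/2\rfloor$, whose farthest vertex is $0a0a\cdots$, each position costing $\lfloor a/2\rfloor$ or $\lceil a/2\rceil$ according to parity; that is where the formula comes from. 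Your claim that the maximum of $\sum|\alpha_k-\beta_k|$ with $\alpha=\mathbf 0$ ``comes out to exactly'' the stated expression is simply false, so the inequality $r(\Pi^a_n)\leq\lfloor a/2\rfloor\lceil n/2\rceil+\lceil a/2\rceil\lfloor n/2\rfloor$ is not established by your argument. (For $a=2$ the two vertices coincide in effect, since then $\lfloor a/2\rfloor=1$ and $e(\mathbf 0)=n$ equals the radius, which may be what misled you.)

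The lower-bound half is also thinner than it needs to be. The per-position estimate you describe only guarantees that each coordinate contributes at least $\lfloor a/2\rfloor$ to the eccentricity, giving $e(v)\geq n\lfloor a/2\rfloor$; when $a$ is odd this falls short of the target by $\lfloor n/2\rfloor$. Closing that gap is the real content of the paper's proof: one must show that the positions contributing only $(a-1)/2$ cannot all be realized simultaneously, which the paper does by decomposing $v$ into maximal $\epsilon$-blocks of lengths $h_1,\dots,h_s$ separated by $k$ other letters, bounding each block's contribution by $h_i\cdot\frac a2-\frac12(\lceil h_i/2\rceil-\lfloor h_i/2\rfloor)$ and each separating letter's by $\frac{a+1}2$, and then running a parity argument on $\sum(\lceil h_i/2\rceil-\lfloor h_i/2\rfloor)$ versus $k$. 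Your ``parity bookkeeping'' gestures at this but does not supply the mechanism. Finally, your appeal to $d(v,w)=\overline h(v,w)$ via the embedding $\sigma$ would need $\sigma$ to be isometric, which the paper only establishes as adjacency-preserving; this is a smaller issue but should not be waved through.
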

\begin{proof} Let $\epsilon=\floor*{\frac{a}{2}}$ and
$\hat{\epsilon}=\epsilon\epsilon\cdots\epsilon\in V(\Pi^a_n)$. We want to
compute the eccentricity $e(\hat{\epsilon})$. It is not hard to see that any
longest path from $\epsilon$ leads to the vertex $0a0a\cdots 0$ if $n$ is odd,
or to $0a0a\cdots 0a$ if $n$ is even. Note that the most distant vertex of
$\hat{\epsilon}$ does not have to be unique. Single letter $\epsilon$ can be
changed to $0$ in $\floor*{\frac{a}{2}}$ steps, and to $a$ in
$a-\floor*{\frac{a}{2}}=\ceil*{\frac{a}{2}}$ steps. Since $\hat{\epsilon}$
has length $n$, there are $\ceil*{\frac{n}{2}}$ odd positions and
$\floor*{\frac{n}{2}}$ even positions. That brings us to the total of
$\floor*{\frac{a}{2}}\ceil*{\frac{n}{2}}+\ceil*{\frac{a}{2}}\floor*{\frac{n}{2}}$
steps. Thus,
$e(\epsilon\epsilon\cdots\epsilon)=\floor*{\frac{a}{2}}\ceil*{\frac{n}{2}}+\ceil*{\frac{a}{2}}\floor*{\frac{n}{2}}$. 

As the next step, we want to show that every other vertex has eccentricity
at least $e(\epsilon)$. Let $v\in V(\Pi^a_n)$ be arbitrary. If $v$ has $k$
blocks $0a$, then it has $n-2k$ letters in alphabet $0,1,\dots,a-1$. The
most distant vertex is obtained if we change every single letter
$0,1,\dots,a-1$ in string $v$ at least $\floor*{\frac{a}{2}}$ times, and if
we change each block $0a$ into $(a-1)0$. The latter transition requires
$k(2a-1)$ steps. 

To prove that
$e(v)\geq \floor*{\frac{a}{2}}\ceil*{\frac{n}{2}}+\ceil*{\frac{a}{2}}\floor*{\frac{n}{2}}$,
we will consider cases of even and odd $a$ separately. If $a$ is even,
then formula (\ref{eq:radius}) becomes  $r(\hat{\epsilon})=n\cdot\frac{a}{2}$.
Hence,
\begin{align*}
e(v)\geq (n-2k)\cdot\frac{a}{2}+k(2a-1)= n\cdot\frac{a}{2}+k(a-1)\geq n\cdot\frac{a}{2}= e(\hat{\epsilon}).
\end{align*} 
For $a$ odd, formula \ref{eq:radius} reduces to
$r(\hat{\epsilon})=n\cdot\frac{a}{2}-\frac{1}{2}\left(\ceil*{\frac{n}{2}}-\floor*{\frac{n}{2}}\right)$.
We have to prove that changing arbitrary number of letters in $\hat{\epsilon}$
does not decrease the eccentricity. So, let $k$ denote the number of letters
that differ from $\epsilon$. That leaves us with $s$ blocks of $\epsilon$ of
length $h_1,h_2\dots,h_s$. Note that $n=k+\sum h_i$ and $s\leq k+1$. According
to the first part of the proof, each $\epsilon$-block contributes to
eccentricity with
$h_i\cdot\frac{a}{2}-\frac{1}{2}\left(\ceil*{\frac{h_i}{2}}-\floor*{\frac{h_i}{2}}\right)$,
and each $\alpha\neq\epsilon$ contributes at least with $\frac{a+1}{2}$.
Note that block $0a$ contributes more than any two letters, so we can assume
$v$ does not contain $0a$ blocks. We have
\begin{align*}
e(v)&\geq \sum\left(h_i\cdot\frac{a}{2}-\frac{1}{2}\left(\ceil*{\frac{h_i}{2}}-\floor*{\frac{h_i}{2}}\right)\right)+k\cdot\frac{a+1}{2}\\
&= n\cdot\frac{a}{2}-\frac{1}{2}\sum\left(\ceil*{\frac{h_i}{2}}-\floor*{\frac{h_i}{2}}\right)+\frac{k}{2}\\
&\geq n\cdot\frac{a}{2}+\frac{k-s}{2}\\
&\geq n\cdot\frac{a}{2}-\frac{1}{2}.
\end{align*}
Hence, for odd $n$, we have $e(v)\geq e(\hat{\epsilon})$. For even $n$, we
have to show that
$\sum\left(\ceil*{\frac{h_i}{2}}-\floor*{\frac{h_i}{2}}\right)\leq k$.
If $\sum\left(\ceil*{\frac{h_i}{2}}-\floor*{\frac{h_i}{2}}\right)=k+1$,
each $h_i$ is odd and $s$ and $k$ have different parity. If $s$ is odd, then
$\sum h_i$ is odd, and since $n=\sum h_i+k$, $k$ is odd too. It follows
that $s$ is even. Then $\sum h_i$ is even and $k$ is even too. Thus, we
reached contradiction with
$\sum\left(\ceil*{\frac{h_i}{2}}-\floor*{\frac{h_i}{2}}\right)=k+1$.
It follows that
$\sum\left(\ceil*{\frac{h_i}{2}}-\floor*{\frac{h_i}{2}}\right)\leq k$, and   
\begin{align*}
e(v)&\geq n\cdot\frac{a}{2}-\frac{1}{2}\sum\left(\ceil*{\frac{h_i}{2}}-\floor*{\frac{h_i}{2}}\right)+\frac{k}{2}\\
&\geq n\cdot\frac{a}{2}\\
&= e(\hat{\epsilon}).
\end{align*}
\end{proof}

\begin{corollary} For $a$ and $n$ odd, the center of $\Pi_n^a$ consists
of a single vertex, $Z(\Pi^a_n)=\left\lbrace \hat{\epsilon}\right\rbrace$.
\end{corollary}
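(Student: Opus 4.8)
The plan is to prove the sharper statement that $\hat\epsilon$ is the \emph{unique} vertex of $\Pi^a_n$ whose eccentricity equals the radius $r:=r(\Pi^a_n)$; since $Z(\Pi^a_n)=\{v\in V(\Pi^a_n):e(v)=r\}$ by definition, this is exactly the claim. From the proof of Theorem \ref{tm:radius} we already know $e(\hat\epsilon)=r$, and for $a,n$ both odd one has $\epsilon=\tfrac{a-1}{2}$ and $r=\tfrac{an-1}{2}$. So it suffices to show $e(v)>r$ for every $v\neq\hat\epsilon$; I treat the main case $a\ge3$ (for $a=1$ one has $\Pi^1_n=\Gamma_{n-1}$, and the statement can be checked directly for Fibonacci cubes). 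First I would reread the estimate in the proof of Theorem \ref{tm:radius}: writing $k$ for the number of letters of $v$ different from $\epsilon$, $s$ for the number of maximal $\epsilon$-blocks of $v$ and $t\le s$ for the number of those blocks of odd length, that argument gives $e(v)\ge \tfrac{na}{2}-\tfrac t2+\tfrac k2\ge \tfrac{na}{2}+\tfrac{k-s}{2}\ge \tfrac{na}{2}-\tfrac12=r$, using $t\le s\le k+1$. If any of these inequalities is strict, then, the eccentricity being an integer, $e(v)\ge r+1$ and we are done. So assume equality throughout: then $v$ has no $0a$-block, every $\epsilon$-block of $v$ has odd length, $s=k+1$ — which forces the non-$\epsilon$ letters of $v$ to be pairwise non-adjacent and to avoid positions $1$ and $n$ — and, because the per-letter bound "a non-$\epsilon$ letter contributes at least $\tfrac{a+1}{2}$ to $e(v)$" is also tight, every non-$\epsilon$ letter of $v$ equals $\epsilon+1$ or $\epsilon-1$. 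Call such a $v$ \emph{extremal}; it factors as $v=B_0\beta_1B_1\cdots\beta_kB_k$ with each $B_j$ an $\epsilon$-block of odd length $h_j\ge1$, each $\beta_j\in\{\epsilon-1,\epsilon+1\}$, $\sum_jh_j=n-k$, and $k\ge1$ since $v\neq\hat\epsilon$.

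The key step is then to exhibit $w\in\mathcal{S}^a_n$ with $d(v,w)\ge r+1$ for an extremal $v$. Start from the \emph{base} string obtained by sending each block $B_j$ of odd length $h$ to $0a0a\cdots0$ (contribution $\tfrac{ah-1}{2}$; note this piece begins and ends with $0$) and each $\beta_j$ to $0$ if $\beta_j=\epsilon+1$ and to $a-1$ if $\beta_j=\epsilon-1$ (contribution $\tfrac{a+1}{2}$ in both cases). This is a valid string, since every letter $a$ sits inside some block immediately after a $0$, and its distance to $v$ is $\sum_j\tfrac{ah_j-1}{2}+k\cdot\tfrac{a+1}{2}=\tfrac{a(n-k)-(k+1)+k(a+1)}{2}=\tfrac{an-1}{2}=r$. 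Now modify this string near the leftmost non-$\epsilon$ letter $\beta_1$, using that the image of the preceding block $B_0$ ends in $0$: if $\beta_1=\epsilon-1$, reset the coordinate at $\beta_1$ to $a$ (legal, its predecessor being $0$), which gains $(a-(\epsilon-1))-(a-1-(\epsilon-1))=1$; if $\beta_1=\epsilon+1$, keep that coordinate equal to $0$ but reroute the following block $B_1$ of odd length $h_1$ to $a0a0\cdots a$ (legal, each $a$ preceded by a $0$), which gains $\tfrac{ah_1+1}{2}-\tfrac{ah_1-1}{2}=1$. In either case the remaining coordinates can still realize their base contributions — an $\epsilon$-block, an $(\epsilon+1)$-letter and an $(\epsilon-1)$-letter each attain their base value irrespective of whether their predecessor is $0$ — the modified string stays valid, and $d(v,w)\ge r+1$. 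Hence $e(v)\ge r+1>r$, and the proof is complete.

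I expect two points to need care. The first, and the only genuine obstacle, is the reduction to extremal $v$: one must deduce from tightness of the per-letter estimate in the proof of Theorem \ref{tm:radius} that each non-$\epsilon$ letter is forced to be $\epsilon\pm1$, which requires a short case analysis of $\max_{0\le c\le a}|\alpha-c|$ with the letter $c=a$ allowed only after a $0$ (a letter at distance $\ge2$ from $\epsilon$ already contributes at least $\tfrac{a+1}{2}+1$). The second is the routine verification that a $0a$-block, an even-length $\epsilon$-block, or the case $s\le k$ each make the chain in that proof strictly exceed $r$; all three follow immediately from the inequalities already displayed there, together with the fact that $e(v)\in\mathbb{Z}$ while $\tfrac{na}{2}\notin\mathbb{Z}$. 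Everything else is bookkeeping of the kind already carried out in the proof of Theorem \ref{tm:radius}.
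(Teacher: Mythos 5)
Your proposal is correct and follows essentially the same route as the paper: tightness of the chain of inequalities from Theorem \ref{tm:radius} forces $v$ into the alternating form $B_0\beta_1B_1\cdots\beta_kB_k$ with $\beta_i=\epsilon\pm1$ and all blocks of odd length, and then the first non-$\epsilon$ letter yields a contradiction — if $\beta_1=\epsilon-1$ it can be sent to $a$, and if $\beta_1=\epsilon+1$ the following block can be rerouted to $a0a\cdots a$, exactly the two cases in the paper's proof. Your explicit base-string-plus-local-modification witness just makes rigorous the "minimality of contribution" argument that the paper states more informally.
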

\begin{proof}
Let $a$ and $n$ be odd and let $\epsilon=\floor*{\frac{a}{2}}=\frac{a-1}{2}$.
By Theorem \ref{tm:radius}, we have $R(\Pi^a_n)=n\cdot\frac{a}{2}-\frac{1}{2}$
and $\hat{\epsilon}\in Z(\Pi^a_n)$. Suppose $v\in Z(\Pi^a_n)$. We want to prove
that $v=\hat{\epsilon}$. As in the proof of Theorem \ref{tm:radius}, let
$h_1,\cdots,h_s$ denote the length of $\epsilon$ blocks in $v$, and let $k$
denotes the number of letters $\alpha\neq\epsilon$. Since $v\in Z(\Pi^a_n)$,
contributions of $\epsilon$-blocks and letters between them must be minimal.
Minimal contribution of each letter $\alpha\neq\epsilon$ is $\frac{a+1}{2}$,
which is achieved only by changing $\epsilon+1$ into $0$ or $\epsilon-1$ into
$a-1$. Hence, $\alpha=\epsilon\pm 1$. Minimal contribution of each
$\epsilon$-blocks is
$h_i\cdot\frac{a}{2}-\frac{1}{2}\left(\ceil*{\frac{h_i}{2}}-\floor*{\frac{h_i}{2}}\right)$.  
We have:
\begin{align*}
e(v)&=\sum\left(h_i\cdot\frac{a}{2}-\frac{1}{2}\left(\ceil*{\frac{h_i}{2}}-\floor*{\frac{h_i}{2}}\right)\right)+k\cdot\frac{a+1}{2}\\
&= n\cdot\frac{a}{2}-\frac{1}{2}\sum\left(\ceil*{\frac{h_i}{2}}-\floor*{\frac{h_i}{2}}\right)+\frac{k}{2}\\
&= n\cdot\frac{a}{2}-\frac{1}{2}. 
\end{align*}
It follows that
$\frac{1}{2}\sum\left(\ceil*{\frac{h_i}{2}}-\floor*{\frac{h_i}{2}}\right)-\frac{k}{2}=\frac{1}{2}$,
so we conclude that
$\sum\left(\ceil*{\frac{h_i}{2}}-\floor*{\frac{h_i}{2}}\right)=s=k+1$. That
means that the length of $\epsilon$-blocks must be odd and between every two
blocks there is a single letter $\alpha_i$ different from $\epsilon$. So $v$
has the form
$\hat{\epsilon}_1\alpha_1\cdots \hat{\epsilon}_{k}\alpha_{k}\hat{\epsilon}_{k+1}$.
The first $\epsilon$-block must be changed into $0a0\cdots 0a0$, hence
$\alpha_1$ must be $\epsilon+1$, which, by changing into $0$ contributes
$\frac{a+1}{2}$. Otherwise, $\alpha_1=\epsilon-1$ could be changed into $a$,
and contribution would be increased by one. But then, in order to achieve
maximal distance, block $\hat{\epsilon_2}$ must be changed into
$a0a0a\cdots0a$, which is contradiction with minimality of the contribution.
That means $k=0$, and $v=\epsilon$.
\end{proof}

Let $\epsilon=\floor*{\frac{a}{2}}$ and
$\hat{\epsilon}=\epsilon\cdots\epsilon\in V(\Pi^a_n)$, as before. We define
$U^a_n\subseteq V(\Pi^a_n)$ to be the set containing all vertices such that
\begin{enumerate}
\item they differ from $\hat{\epsilon}$ in at most one position, i.e.,
they have form $\epsilon\cdots\epsilon\alpha\epsilon\cdots\epsilon$,
where $\alpha=\epsilon$ or $\alpha=\epsilon\pm 1$,
\item letter $\epsilon+1$ can only appear on even positions,
\item letter $\epsilon-1$ can only appear on odd positions.
\end{enumerate} 
Note that for an even $n$, every vertex $v\in U^a_n$ not equal to
$\hat{\epsilon}$ has exactly one $\epsilon$-block of odd length. 

\begin{corollary} For $a$ odd and $n$ even, $Z(\Pi^a_n)=U_n^a$.
\end{corollary}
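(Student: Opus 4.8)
We may assume $a\ge 3$; the case $a=1$ concerns centres of Fibonacci cubes and is handled by a minor variant of the argument below, which we omit. Since $a$ is odd and $n$ is even, Theorem~\ref{tm:radius} gives $r(\Pi^a_n)=n\cdot\frac{a}{2}$, so $Z(\Pi^a_n)$ is precisely the set of vertices of eccentricity $n\cdot\frac{a}{2}$; throughout put $\epsilon=\floor*{\frac{a}{2}}$. The plan is to prove the inclusions $U^a_n\subseteq Z(\Pi^a_n)$ and $Z(\Pi^a_n)\subseteq U^a_n$ separately.

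For the inclusion $U^a_n\subseteq Z(\Pi^a_n)$: the vertex $\hat{\epsilon}$ is central by the proof of Theorem~\ref{tm:radius}. For $v=\epsilon\cdots\epsilon\,\alpha\,\epsilon\cdots\epsilon\in U^a_n$ with the single non-$\epsilon$ letter $\alpha$ in position $p$, I would read off a most distant vertex from the Remark preceding Theorem~\ref{tm:radius}: the $\epsilon$-block before $\alpha$ turns into the alternating word $0a0a\cdots$; the letter $\alpha$ turns into $0$ if $\alpha=\epsilon+1$ (which by the definition of $U^a_n$ forces $p$ even, so the front block has odd length $p-1$ and contributes $\frac{(p-1)a-1}{2}$, while $\alpha$ contributes $\frac{a+1}{2}$) and into $a-1$ if $\alpha=\epsilon-1$ (which forces $p$ odd, so the front block has even length $p-1$, contributes $\frac{(p-1)a}{2}$ and ends in $a$ rather than $0$, and $\alpha$ again contributes $\frac{a+1}{2}$); and the $\epsilon$-block behind $\alpha$, of length $n-p$, turns into the appropriate alternating word. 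Summing the three contributions gives $n\cdot\frac{a}{2}$ in either case, and since by the Remark the maximal distance from $v$ is realised by this construction, $e(v)=n\cdot\frac{a}{2}$.

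For the reverse inclusion, let $v\in Z(\Pi^a_n)$. As in the proof of Theorem~\ref{tm:radius}, $v$ contains no $0a$-block: such a block contributes at least $2a-1$ to the eccentricity, and a mild strengthening of the estimate there shows that its presence forces $e(v)>r(\Pi^a_n)$; hence $v$ is a word over $\{0,\dots,a-1\}$. Decompose $v$ into its $k$ letters $\beta_1,\dots,\beta_k$ different from $\epsilon$ and the $k+1$ (possibly empty) maximal $\epsilon$-blocks $b_0,\dots,b_k$ before, between and after them, so $\sum_i b_i=n-k$. The estimate from the proof of Theorem~\ref{tm:radius}, namely $e(v)\ge n\cdot\frac{a}{2}+\frac{1}{2}\bigl(k-\#\{i:b_i\text{ odd}\}\bigr)$ together with $\#\{i:b_i\text{ odd}\}\le k$ for even $n$, forces for $v\in Z(\Pi^a_n)$ that exactly one block has even length and that, in a most distant vertex, every $\beta_j$ contributes exactly $\frac{a+1}{2}$ and every $\epsilon$-block contributes its minimum. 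Using that an odd $\epsilon$-block attains its minimum only when it begins with $0$ (beginning with $a$ costs one more), and that in a most distant vertex a letter becomes $0$ only if it equals $\epsilon+1$ (whereas $\epsilon-1$ becomes $a-1$, or $a$ at an extra cost of one if preceded by a $0$), one obtains the local rules: (A) each $\beta_j\in\{\epsilon-1,\epsilon+1\}$; (B) if $\beta_j=\epsilon+1$ then $b_j$ is even; (C) $\epsilon-1$ is never immediately preceded by a $0$ in a most distant vertex; (D) if $b_i$ is odd with $i\ge 1$ then $\beta_i=\epsilon-1$.

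A left-to-right sweep then pins $v$ down. If the unique even block is $b_0$, then $b_1,\dots,b_k$ are odd; by (D), $\beta_1=\epsilon-1$, which by (C) becomes $a-1$, so $b_1$ begins with $0$ and, being odd, ends with $0$; if $k\ge 2$ this makes $\beta_2$ preceded by a $0$, so $\beta_2=\epsilon+1$ by (A) and (C), hence $b_2$ is even by (B) --- impossible, since $b_0$ is the only even block. Thus $k\le 1$, and when $k=1$ the letter $\beta_1=\epsilon-1$ occupies position $b_0+1$, which is odd. If instead the unique even block is $b_j$ with $j\ge 1$, then $b_0$ is odd, begins with $0$ and ends with $0$, so $\beta_1$ is preceded by a $0$, forcing $\beta_1=\epsilon+1$ by (A) and (C) and $b_1$ even by (B); hence $j=1$, $b_1$ ends with $0$, and if $k\ge 2$ the same step forces $b_2$ even, again a contradiction; so $k\le 1$, and when $k=1$ the letter $\beta_1=\epsilon+1$ occupies position $b_0+1$, which is even. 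Finally $k=0$ gives $v=\hat{\epsilon}$. In every case $v\in U^a_n$, which establishes $Z(\Pi^a_n)\subseteq U^a_n$ and hence, together with the first inclusion, the corollary. The main obstacle I anticipate is the bookkeeping needed to make this sweep fully rigorous --- empty $\epsilon$-blocks must be handled, so that ``preceded by a $0$'' is traced back through adjacent non-$\epsilon$ letters, and each ``costs one more'' comparison behind rules (B)--(D) must be seen to hold globally rather than merely locally, which relies on the rigidity of the forced alternating pattern $0a0a\cdots$ inside $\epsilon$-blocks.
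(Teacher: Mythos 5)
Your proposal is correct and follows essentially the same route as the paper: both inclusions are handled by the same contribution bookkeeping set up in the proof of Theorem~\ref{tm:radius} --- a direct computation of $e(v)$ for $v\in U^a_n$, and for the converse the forcing of the number of odd $\epsilon$-blocks to equal $k$ followed by a case analysis showing $k\le 1$. Your local rules (A)--(D) and the left-to-right sweep are simply a more systematic rendering of the paper's observation that any configuration (odd block)(letter)(odd block), or a letter $\epsilon+1$ followed by an odd block, yields an extra unit of eccentricity.
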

\begin{proof}
By Theorem \ref{tm:radius}, $\hat{\epsilon}\in Z(\Pi^a_n)$, so let
$v\in U^a_n$ be arbitrary and different from $\hat{\epsilon}$. Let $h_1$
denote the length of odd $\epsilon$-block and, since $n$ is even, the other
$\epsilon$-block has even length $h_2$. If $v$ has one letter $\epsilon+1$,
then $\epsilon+1$ is immediately after $\epsilon$-block of length $h_1$.
So, by the proof of Theorem \ref{tm:radius}, $\epsilon$-block of length $h_1$,
by changing into block $0a0\cdots 0a0$, contributes with
$h_1\cdot\frac{a}{2}-\frac{1}{2}$, letter $\epsilon+1$ contributes with
$\frac{a+1}{2}$ by changing into $0$, and $\epsilon$-block of length $h_2$
contributes with $h_2\cdot\frac{a}{2}$ by changing into block $a0\cdots a0$.
So, \begin{align*}
e(v)&= h_1\cdot\frac{a}{2}-\frac{1}{2}+\frac{a+1}{2}+ h_2\cdot\frac{a}{2}= n\cdot\frac{a}{2},
\end{align*} 
hence $v\in Z(\Pi^a_n)$. In the other case, where $v$ contains letter
$\epsilon-1$ just before $\epsilon$-block of length $h_1$, we have similar
situation. Letter $\epsilon-1$ is either first letter or comes immediately
after $\epsilon$-block of length $h_2$. In both cases it must be changed
into $a-1$, thus contributing with $\frac{a+1}{2}$. The $\epsilon$-block
of length $h_2$ contributes with $h_2\cdot\frac{a}{2}$ by changing into
$0a\cdots 0a$, and $\epsilon$-block of length $h_1$ contributes with
$h_1\cdot\frac{a}{2}-\frac{1}{2}$. Thus, we proved
$U^a_n\subseteq Z(\Pi^a_n)$. 

Now we want to prove that $Z(\Pi^a_n)\subseteq U^a_n$. Let $v\in Z(\Pi^a_n)$.
Then $e(v)=n\cdot \frac{a}{2}$. On the other hand, let $k$ denote number of
letters different from $\epsilon$. Those letters must be $\epsilon\pm 1$,
since every other letter increases the eccentricity. If $h_1,\dots, h_s$
denote the length of $\epsilon$-blocks, then
\begin{align*}
e(v)&=\sum\left(h_i\cdot\frac{a}{2}-\frac{1}{2}\left(\ceil*{\frac{h_i}{2}}-\floor*{\frac{h_i}{2}}\right)\right)+k\cdot\frac{a+1}{2}\\
&= n\cdot\frac{a}{2}-\frac{1}{2}\sum\left(\ceil*{\frac{h_i}{2}}-\floor*{\frac{h_i}{2}}\right)+\frac{k}{2}.
\end{align*}
It follows that
$\sum\left(\ceil*{\frac{h_i}{2}}-\floor*{\frac{h_i}{2}}\right)=k$, which
means that the number of $\epsilon$-blocks of odd length is $k$. If $k=0$,
then $v=\hat{\epsilon}$. If $k=1$, then there are at most two
$\epsilon$-blocks with lengths $h_1$ and $h_2$. Since $\epsilon+1$ coming
before $\epsilon$-block of odd length increases its eccentricity, we
conclude that it must come after it. In the other case, for the same reason,
the letter $\epsilon-1$ must come before $\epsilon$-block of odd length.
Since $n$ is even, the other $\epsilon$-block has even length. So,
$v\in U^a_n$. If $k>1$, then
$v=\hat{\epsilon_1}\alpha_1\cdots \hat{\epsilon_k}\alpha_k\epsilon_{k+1}$,
where $k$ blocks have odd length and one block has even length.
Since $k>1$, we have at least one substring
$\hat{\epsilon_i}\alpha_i\hat{\epsilon_j}$ with both $\epsilon$-blocks of
odd length. Since $\alpha_i$ must be $\epsilon+1$, it increases
the eccentricity of block $\hat{\epsilon_j}$. Hence, $k\leq 1$ and
$v\in U^a_n$. 
\end{proof}

Let $\epsilon=\frac{a}{2}$ and let $V^a_n\subseteq Z(\Pi^a_n)$ be the set
containing all vertices with letters $\epsilon$ or $\epsilon-1$ such that
$\epsilon-1$ can not appear after an $\epsilon$-block of odd length. 

\begin{corollary}For $a$ even, $Z(\Pi^a_n)=V^a_n$.
\end{corollary}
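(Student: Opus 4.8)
The plan is to follow the template of the two preceding corollaries and establish the equality by the two inclusions $V^a_n\subseteq Z(\Pi^a_n)$ and $Z(\Pi^a_n)\subseteq V^a_n$, in each case comparing the eccentricity $e(v)$ of an arbitrary vertex $v$ with the value $r(\Pi^a_n)=n\cdot\frac a2$ supplied by Theorem~\ref{tm:radius}. Write $\epsilon=\frac a2$. The single arithmetic fact that drives everything is that, for even $a$, a symbol equal to $\epsilon$ lies at distance exactly $\epsilon$ from both $0$ and $a$ (and strictly less from anything else), a symbol equal to $\epsilon-1$ lies at distance $\epsilon$ from $a-1$ and at distance $\epsilon+1$ from $a$, and every other letter lies at distance $\ge\epsilon+1$ from its farthest admissible target value; this is the Remark specialised to even $a$.

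For $V^a_n\subseteq Z(\Pi^a_n)$, take $v\in V^a_n$ and record the key consequence of its defining property: an $\epsilon-1$ in $v$ is never immediately preceded by an $\epsilon$ (its predecessor is either absent or again an $\epsilon-1$). A farthest candidate $w$ is obtained by sending every $\epsilon-1$ to $a-1$ and filling every $\epsilon$-run with an admissible word over $\{0,a\}$ that starts with $0$; then $\bar h(v,w)=n\epsilon$, so $e(v)\ge r(\Pi^a_n)$. For the reverse inequality one checks that no admissible $w$ can beat $n\epsilon$: a symbol $\epsilon$ never contributes more than $\epsilon$, and a symbol $\epsilon-1$ contributes $\epsilon+1$ only when $w$ sends it to $a$, which by the rule ``$a$ occurs only after $0$'' forces the previous symbol of $w$ to be $0$ — but in $v$ that previous symbol is absent or an $\epsilon-1$, and an $\epsilon-1$ mapped to $0$ contributes only $\epsilon-1$, so the extra unit is immediately repaid (and the very first symbol of $v$ can never be ``activated'' this way). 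A short injectivity argument (distinct activated positions force distinct preceding $0$'s) then yields $e(v)=n\epsilon=r(\Pi^a_n)$, so $v\in Z(\Pi^a_n)$.

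For $Z(\Pi^a_n)\subseteq V^a_n$ I would argue by contraposition, showing $v\notin V^a_n\Rightarrow e(v)>r(\Pi^a_n)$. If $v$ contains a $0a$-block, the estimate already obtained in the proof of Theorem~\ref{tm:radius} gives $e(v)\ge r(\Pi^a_n)+(a-1)$. If $v$ contains a letter $\alpha\notin\{\epsilon-1,\epsilon\}$ with $\alpha\le a-1$, send $\alpha$ in $w$ to its farthest admissible value (distance $\ge\epsilon+1$) and fill all remaining positions by ``safe'' choices that never use the letter $a$ (all $0$'s in $\epsilon$-runs, $a-1$ for $\epsilon-1$, $0$ for $\epsilon+1$, etc.), each contributing $\ge\epsilon$; since these impose no constraints on one another, $e(v)\ge r(\Pi^a_n)+1$. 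Finally, if $v$ uses only $\epsilon$ and $\epsilon-1$ but some $\epsilon-1$ at position $p$ has $v_{p-1}=\epsilon$, fill the $\epsilon$-run ending at $p-1$ with the all-$0$ word (legitimate for a run of any length, imposing nothing on its left neighbour and ending in $0$), then set $w_p=a$, which is now admissible and puts position $p$ at distance $\epsilon+1$, while nothing else drops below $\epsilon$; again $e(v)\ge r(\Pi^a_n)+1$. In all cases $v\notin Z(\Pi^a_n)$.

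The delicate point, both times, is exactly the coupling produced by the constraint that $a$ may follow only $0$: for the first inclusion one must be sure an ``activation'' $\epsilon-1\mapsto a$ cannot be obtained for free, i.e. the forced $0$ in $w$ always lands on a symbol where it costs a unit; for the second one must know that an $\epsilon$-run preceding a troublesome $\epsilon-1$ can be filled so as to simultaneously realise its maximal contribution and end in $0$ (the all-$0$ filling does this, irrespective of the run's length). Both facts are short and use precisely the structure encoded in $V^a_n$; the remaining bookkeeping with $\lfloor\cdot\rfloor$ and $\lceil\cdot\rceil$ is of the same kind already carried out in Theorem~\ref{tm:radius} and its corollaries, so I would not expect further obstacles there.
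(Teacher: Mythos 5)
Your two-inclusion frame is the right one, and your reverse inclusion $Z(\Pi^a_n)\subseteq V^a_n$ is sound (indeed it proves something stronger). The genuine gap is at the very first step of the forward inclusion: the ``key consequence'' you extract from $v\in V^a_n$ --- that an $\epsilon-1$ in $v$ is never immediately preceded by an $\epsilon$ --- is not what the definition of $V^a_n$ says. The definition forbids an $\epsilon-1$ only after an $\epsilon$-block of \emph{odd} length, so a vertex such as $\epsilon\epsilon(\epsilon-1)$, where an $\epsilon-1$ directly follows an even-length $\epsilon$-block, does belong to $V^a_n$. For such a vertex your ``every activation is repaid'' bookkeeping breaks down, because the forced $0$ of $w$ lands on an $\epsilon$, where it costs nothing; in fact your own third case of the reverse inclusion applies verbatim to it (the all-$0$ filling of an $\epsilon$-run is legal for any length, ends in $0$, and still realises the maximal contribution $\epsilon$ per letter) and yields $e(v)\ge n\cdot\frac a2+1>r(\Pi^a_n)$. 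Concretely, for $a=4$, $n=4$ the vertex $2211$ lies in $V^4_4$, yet $d(2211,0043)\ge\overline h(2211,0043)=2+2+3+2=9>8=r(\Pi^4_4)$, so $2211\notin Z(\Pi^4_4)$.

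So the corollary as printed is false: $V^a_n\not\subseteq Z(\Pi^a_n)$. The parity condition in the definition of $V^a_n$ is a relic of the odd-$a$ analysis, where sending $\epsilon$ to $0$ and to $a$ cost $\lfloor a/2\rfloor$ and $\lceil a/2\rceil$ respectively, so block parity matters; for even $a$ these costs coincide and parity is irrelevant. What your argument actually establishes, correctly, is $Z(\Pi^a_n)=\{(\epsilon-1)^k\epsilon^{n-k}:0\le k\le n\}$, a set of $n+1$ vertices --- which also contradicts the paper's example for $Z(\Pi^4_4)$ and the subsequent remark that $|Z(\Pi^a_n)|=F_{n+2}$ for $a$ even. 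For comparison, the paper's own proof obtains $Z(\Pi^a_n)\subseteq V^a_n$ from the assertion that ``for the letter $\frac a2-1$ to be changed into $a-1$, the letter just before it must be changed into $a-1$ or $a$''; that is where it goes astray, since $a-1$ is an unrestricted letter and only the change into $a$ constrains the predecessor --- a constraint an adversarial $w$ can always satisfy for free whenever the predecessor in $v$ is an $\epsilon$, exactly as your construction exploits. I would recommend recording your counterexample and restating the corollary with the corrected center.
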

\begin{proof}
Let $a$ be even and let $\epsilon=\frac{a}{2}$. By Theorem \ref{tm:radius},
we have $R(\Pi^a_n)=n\cdot\frac{a}{2}$ and $\hat{\epsilon}\in Z(\Pi^a_n)$.
Let $v\in Z(\Pi^a_n)$. Minimal contribution of each letter is $\frac{a}{2}$.
For minimal contribution to be achieved, one can only change letter
$\frac{a}{2}$ into $0$ or $a$, and $\frac{a}{2}-1$ into $a-1$. For letter
$\frac{a}{2}-1$ to be changed into $a-1$, the letter just before it must be
changed into $a-1$ or $a$. So, letter $\frac{a}{2}-1$ can come only after
another letter $\frac{a}{2}-1$ or after an $\epsilon$-block of even length.
Hence, $v\in V_n$. 

Conversely, from definition of $V^a_n$, it is easy to see that
$e(v)=n\cdot\frac{a}{2}$ whenever $v\in V^a_n$, hence
$V^a_n\subset Z(\Pi^a_n)$.\end{proof} 

For example, $Z(\Pi^5_6)=\left\lbrace 122222,221222,222212,222222,222223, 222322, 232222 \right\rbrace$, $Z(\Pi^5_7)=\left\lbrace 2222222 \right\rbrace$ and $Z(\Pi^4_4)=\left\lbrace 1111,1112,1122,1221,1222,2211,2212,2222 \right\rbrace$.

\begin{remark} $|Z(\Pi^a_n)|=\begin{cases}
    1 & \textup{for } a \textup{ and } n \textup{ odd, }\\
    n+1 & \textup{for } a \textup{ odd and } n \textup{ even,} \\
    F_{n+2} & \textup{for } a \textup{ even.}
  \end{cases}$ \end{remark} 

Our final result in this section is about diameters of metallic cubes.
 
\begin{theorem}\label{tm:diameter} For $a\geq 1$ and $n\geq 0$ we have
\begin{align}\label{eq:diameter}
d(\Pi^a_n)=an-1.
\end{align}
The periphery consists of two vertices,
$0a0a\cdots 0a(0)$ and $(a-1)0a\cdots a0(a)$.
\end{theorem}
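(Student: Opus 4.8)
The plan is to reduce everything to the modified Hamming distance $\overline{h}$ introduced earlier. The crucial preliminary claim is that $\Pi^a_n$ is an isometric subgraph of the grid $P_{a+1}^n$; that is, $d_{\Pi^a_n}(\alpha,\beta)=\overline{h}(\alpha,\beta)=\sum_i|\alpha_i-\beta_i|$ for all vertices $\alpha,\beta$. One inequality is free: a single adjacency step changes $\overline{h}(\,\cdot\,,\beta)$ by exactly one, so $d(\alpha,\beta)\ge\overline{h}(\alpha,\beta)$. For the reverse inequality I would prove a reduction lemma — whenever $\alpha\ne\beta$, some admissible single-letter move on $\alpha$ strictly decreases $\overline{h}(\,\cdot\,,\beta)$ — and iterate it to build a path of length $\overline{h}(\alpha,\beta)$. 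The lemma splits into two cases according to how $\alpha$ and $\beta$ place the letter $a$. If some coordinate carries $a$ in exactly one of the two strings, pick such a coordinate $i$: if $\alpha_i=a$, lower it to $a-1$; if $\beta_i=a$ (so $i\ge 2$, since the first coordinate is never $a$), then raise $\alpha_i$ when $\alpha_{i-1}=0$ and otherwise decrease $\alpha_{i-1}$. If $\alpha$ and $\beta$ carry $a$ in exactly the same coordinates, then any coordinate where they differ is not immediately to the left of an $a$, so the letter there may be nudged one step toward $\beta$. In every sub-case one checks that the move respects the constraint — here the key point is that an occurrence of $a$ forces a $0$ on its left, so the forbidden pattern never arises — and strictly decreases $\overline{h}$. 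I expect this lemma to be the main obstacle, as it is the only place where the $0a$-block structure genuinely interferes and the case analysis needs to be carried out with care.

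Granting the lemma, the diameter drops out. Each letter lies in $\{0,1,\dots,a\}$, so $\overline{h}(\alpha,\beta)\le an$, and equality would force $\{\alpha_i,\beta_i\}=\{0,a\}$ in every coordinate; but the first coordinate is never $a$, so equality is impossible and $d(\alpha,\beta)=\overline{h}(\alpha,\beta)\le an-1$. For the matching lower bound I would take the two claimed vertices $u=0a0a\cdots$ and $w=(a-1)0a0a\cdots$, with the last letter determined by the parity of $n$ exactly as in the statement; both are genuine vertices because every $a$ in them immediately follows a $0$, and summing coordinatewise gives $\overline{h}(u,w)=(a-1)+a\lfloor n/2\rfloor+a(\lceil n/2\rceil-1)=an-1$. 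Hence $d(\Pi^a_n)=an-1$, and $e(u)=e(w)=an-1$, so $u$ and $w$ lie in the periphery. (The genuinely degenerate cases — $n=0$, and $a=n=1$, where the graph is a single vertex — I would treat separately; for the remaining small values of $n$ the statement is immediate.)

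It remains to show the periphery contains nothing else. Suppose $v$ is peripheral, so $\overline{h}(v,\beta)=an-1$ for some $\beta$. Writing $an-1=\sum_i\bigl(a-|v_i-\beta_i|\bigr)$ as a sum of nonnegative integers shows that exactly one coordinate $j$ has $|v_j-\beta_j|=a-1$ while $\{v_i,\beta_i\}=\{0,a\}$ for all $i\ne j$; since the first coordinate is never $a$, necessarily $j=1$ and $\{v_1,\beta_1\}=\{0,a-1\}$. Taking $v_1=0$ and $\beta_1=a-1$ and reading coordinates from left to right, the $0a$-constraint forces the alternation — in each coordinate, of $v_i$ and $\beta_i$ only the one whose preceding coordinate is $0$ may equal $a$ — and following this through gives $v=u$, $\beta=w$ (the alternative $v_1=a-1$ giving $v=w$ symmetrically). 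Consequently every vertex other than $u$ and $w$ has eccentricity at most $an-2$, and therefore the periphery is exactly $\{u,w\}$, which completes the proof.
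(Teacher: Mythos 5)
Your proposal is correct and is, in substance, a rigorous version of what the paper does implicitly. The paper's proof of this theorem is two sentences long: it exhibits $0a0a\cdots$, sums per-letter ``contributions'' to get eccentricity $an-1$, and observes that the first letter can never contribute $a$ because no string begins with $a$. This silently assumes exactly your key lemma --- that the graph distance in $\Pi^a_n$ equals the coordinatewise distance $\overline{h}$, i.e.\ that $\Pi^a_n$ sits isometrically in the grid $P_{a+1}^{\,n}$ --- which is used throughout the paper's Section~5 but never proved there. You isolate this as the load-bearing step and supply a greedy reduction argument for it; you also prove that the periphery contains \emph{only} the two stated vertices (via the decomposition $an-1=\sum_i(a-|v_i-\beta_i|)$ and the left-to-right forcing of the alternation), whereas the paper merely asserts this. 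Both of those additions are genuine improvements in rigor. Your arithmetic for the lower bound, $\overline{h}(u,w)=(a-1)+(n-1)a=an-1$, and the cap $\overline{h}\le an-1$ coming from the first coordinate, match the paper's computation.

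One point in the reduction lemma needs the care you already anticipated: in the sub-case $\beta_i=a$, $\alpha_i\ne a$, $\alpha_{i-1}=0$, raising $\alpha_i$ can produce an invalid string when $\alpha_i=0$ and $\alpha_{i+1}=a$ (the raised letter would strand that $a$ without a preceding $0$). This is not a counterexample to the lemma, because in that configuration $\beta_{i+1}\ne a$ (otherwise $\beta_i$ would have to be $0$, not $a$), so coordinate $i+1$ carries an unmatched $a$ in $\alpha$ and your first rule (lower $\alpha_{i+1}$ to $a-1$) applies there instead. The fix is simply to prioritize: always first eliminate a coordinate where $\alpha_i=a\ne\beta_i$; once all $a$'s of $\alpha$ are matched by $a$'s of $\beta$, the remaining moves are safe. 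With that ordering made explicit, your lemma and hence the whole argument goes through.
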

\begin{proof}
Consider the vertex $v=0a0a\cdots 0$ if $n$ is odd or $v=0a0a\cdots a$ if
$n$ is even. The first letter contributes with $a-1$ by changing into $a-1$,
and every other letter with $a$ by changing into $0$ or $a$. We have
$e(v)=an-1$. It is clear that no other vertex has eccentricity greater then
$an-1$, because a string can not begin with a letter $a$, thus maximal
contribution of the first letter is also achieved. So for $n$ even we
have $P(\Pi^a_n)=\left\lbrace 0a\cdots 0a, (a-1)0a\cdots a0\right\rbrace$,
and for $n$ odd $P(\Pi^a_n)=\left\lbrace 0a\cdot a0, (a-1)0a\cdots 0a\right\rbrace$. 
\end{proof}   
 
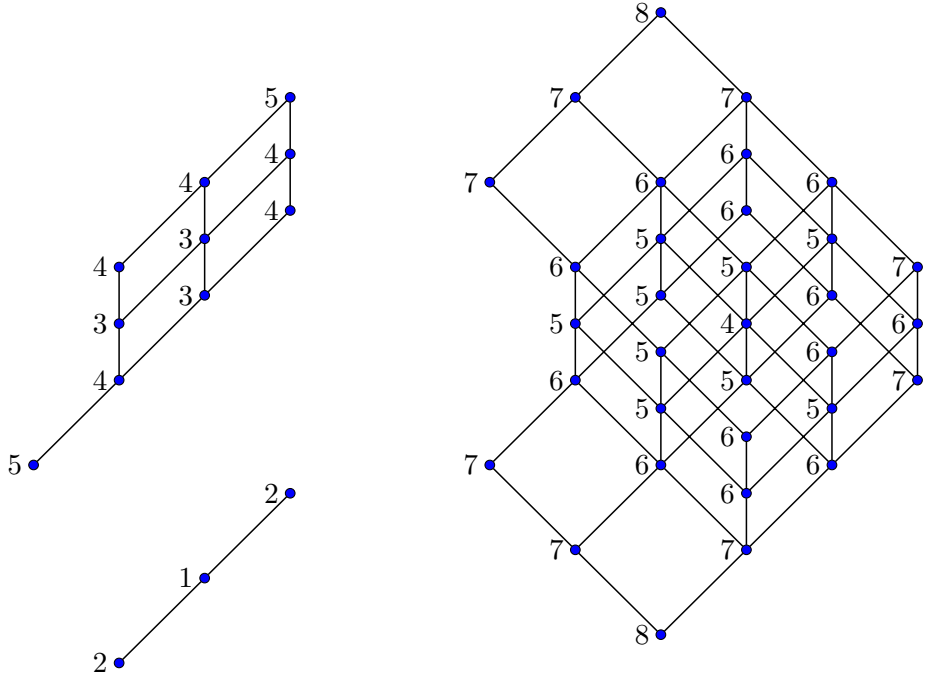
\begin{figure}[h!] \centering
\begin{tikzpicture}[scale=0.75]
\tikzmath{\x1 = 0.35; \y1 =-0.05; \z1=180; \w1=0.2; \xs=-8; \ys=0; \yss=-5;
\x2 = \x1 + 1; \y2 =\y1 +3; } 
\small
\node [label={[label distance=\y1 cm]\z1: $6$},circle,fill=blue,draw=black,scale=\x1](A1) at (0,4) {};
\node [label={[label distance=\y1 cm]\z1: $6$},circle,fill=blue,draw=black,scale=\x1](A2) at (0,5) {};
\node [label={[label distance=\y1 cm]\z1: $7$},circle,fill=blue,draw=black,scale=\x1](A3) at (0,6) {};
\node [label={[label distance=\y1 cm]\z1: $5$},circle,fill=blue,draw=black,scale=\x1](A4) at (-1.5,2.5) {};
\node [label={[label distance=\y1 cm]\z1: $5$},circle,fill=blue,draw=black,scale=\x1](A5) at (-1.5,3.5) {};
\node [label={[label distance=\y1 cm]\z1: $6$},circle,fill=blue,draw=black,scale=\x1](A6) at (-1.5,4.5) {};
\node [label={[label distance=\y1 cm]\z1: $6$},circle,fill=blue,draw=black,scale=\x1](A7) at (-3,1) {};
\node [label={[label distance=\y1 cm]\z1: $5$},circle,fill=blue,draw=black,scale=\x1](A8) at (-3,2) {};
\node [label={[label distance=\y1 cm]\z1: $6$},circle,fill=blue,draw=black,scale=\x1](A9) at (-3,3) {};
\node [label={[label distance=\y1 cm]\z1: $6$},circle,fill=blue,draw=black,scale=\x1](A10) at (1.5,2.5) {};
\node [label={[label distance=\y1 cm]\z1: $5$},circle,fill=blue,draw=black,scale=\x1](A11) at (1.5,3.5) {};
\node [label={[label distance=\y1 cm]\z1: $6$},circle,fill=blue,draw=black,scale=\x1](A12) at (1.5,4.5) {};
\node [label={[label distance=\y1 cm]\z1: $5$},circle,fill=blue,draw=black,scale=\x1](A13) at (0,1) {};
\node [label={[label distance=\y1 cm]\z1: $4$},circle,fill=blue,draw=black,scale=\x1](A14) at (0,2) {};
\node [label={[label distance=\y1 cm]\z1: $5$},circle,fill=blue,draw=black,scale=\x1](A15) at (0,3) {};
\node [label={[label distance=\y1 cm]\z1: $6$},circle,fill=blue,draw=black,scale=\x1](A16) at (-1.5,-0.5) {};
\node [label={[label distance=\y1 cm]\z1: $5$},circle,fill=blue,draw=black,scale=\x1](A17) at (-1.5,0.5) {};
\node [label={[label distance=\y1 cm]\z1: $5$},circle,fill=blue,draw=black,scale=\x1](A18) at (-1.5,1.5) {};
\node [label={[label distance=\y1 cm]\z1: $7$},circle,fill=blue,draw=black,scale=\x1](A19) at (3,1) {};
\node [label={[label distance=\y1 cm]\z1: $6$},circle,fill=blue,draw=black,scale=\x1](A20) at (3,2) {};
\node [label={[label distance=\y1 cm]\z1: $7$},circle,fill=blue,draw=black,scale=\x1](A21) at (3,3) {};
\node [label={[label distance=\y1 cm]\z1: $6$},circle,fill=blue,draw=black,scale=\x1](A22) at (1.5,-0.5) {};
\node [label={[label distance=\y1 cm]\z1: $5$},circle,fill=blue,draw=black,scale=\x1](A23) at (1.5,0.5) {};
\node [label={[label distance=\y1 cm]\z1: $6$},circle,fill=blue,draw=black,scale=\x1](A24) at (1.5,1.5) {};
\node [label={[label distance=\y1 cm]\z1: $7$},circle,fill=blue,draw=black,scale=\x1](A25) at (0,-2) {};
\node [label={[label distance=\y1 cm]\z1: $6$},circle,fill=blue,draw=black,scale=\x1](A26) at (0,-1) {};
\node [label={[label distance=\y1 cm]\z1: $6$},circle,fill=blue,draw=black,scale=\x1](A27) at (0,0) {};
\node [label={[label distance=\y1 cm]\z1: $7$},circle,fill=blue,draw=black,scale=\x1](A28) at (-4.5,-0.5) {};
\node [label={[label distance=\y1 cm]\z1: $7$},circle,fill=blue,draw=black,scale=\x1](A29) at (-3,-2) {};
\node [label={[label distance=\y1 cm]\z1: $8$},circle,fill=blue,draw=black,scale=\x1](A30) at (-1.5,-3.5) {};
\node [label={[label distance=\y1 cm]\z1: $8$},circle,fill=blue,draw=black,scale=\x1](A31) at (-1.5,7.5) {};
\node [label={[label distance=\y1 cm]\z1: $7$},circle,fill=blue,draw=black,scale=\x1](A32) at (-3,6) {};
\node [label={[label distance=\y1 cm]\z1: $7$},circle,fill=blue,draw=black,scale=\x1](A33) at (-4.5,4.5) {};

\draw [line width=\w1 mm] (A1)--(A2)--(A3) (A4)--(A5)--(A6) (A7)--(A8)--(A9) (A10)--(A11)--(A12) (A13)--(A14)--(A15) (A16)--(A17)--(A18) (A19)--(A20)--(A21) (A22)--(A23)--(A24) (A25)--(A26)--(A27) (A1)--(A4)--(A7)--(A28) (A2)--(A5)--(A8) (A3)--(A6)--(A9) (A10)--(A13)--(A16)--(A29) (A11)--(A14)--(A17) (A12)--(A15)--(A18) (A19)--(A22)--(A25)--(A30) (A20)--(A23)--(A26) (A21)--(A24)--(A27) (A1)--(A10)--(A19) (A2)--(A11)--(A20) (A31)--(A3)--(A12)--(A21) (A4)--(A13)--(A22) (A5)--(A14)--(A23) (A32)--(A6)--(A15)--(A24) (A7)--(A16)--(A25) (A8)--(A17)--(A26) (A33)--(A9)--(A18)--(A27)  (A28)--(A29)--(A30) (A31)--(A32)--(A33);

\node [label={[label distance=\y1 cm]\z1: $4$},circle,fill=blue,draw=black,scale=\x1](B1) at (0+\xs,4+\ys) {};
\node [label={[label distance=\y1 cm]\z1: $4$},circle,fill=blue,draw=black,scale=\x1](B2) at (0+\xs,5+\ys) {};
\node [label={[label distance=\y1 cm]\z1: $5$},circle,fill=blue,draw=black,scale=\x1](B3) at (0+\xs,6+\ys) {};
\node [label={[label distance=\y1 cm]\z1: $3$},circle,fill=blue,draw=black,scale=\x1](B4) at (-1.5+\xs,2.5+\ys) {};
\node [label={[label distance=\y1 cm]\z1: $3$},circle,fill=blue,draw=black,scale=\x1](B5) at (-1.5+\xs,3.5+\ys) {};
\node [label={[label distance=\y1 cm]\z1: $4$},circle,fill=blue,draw=black,scale=\x1](B6) at (-1.5+\xs,4.5+\ys) {};
\node [label={[label distance=\y1 cm]\z1: $4$},circle,fill=blue,draw=black,scale=\x1](B7) at (-3+\xs,1+\ys) {};
\node [label={[label distance=\y1 cm]\z1: $3$},circle,fill=blue,draw=black,scale=\x1](B8) at (-3+\xs,2+\ys) {};
\node [label={[label distance=\y1 cm]\z1: $4$},circle,fill=blue,draw=black,scale=\x1](B9) at (-3+\xs,3+\ys) {};
\node [label={[label distance=\y1 cm]\z1: $5$},circle,fill=blue,draw=black,scale=\x1](B28) at (-4.5+\xs,-0.5+\ys) {};

\draw [line width=\w1 mm] (B1)--(B2)--(B3) (B4)--(B5)--(B6) (B7)--(B8)--(B9)  (B1)--(B4)--(B7)--(B28) (B2)--(B5)--(B8) (B3)--(B6)--(B9); 

\node [label={[label distance=\y1 cm]\z1: $2$},circle,fill=blue,draw=black,scale=\x1](C1) at (0+\xs,4+\yss) {};
\node [label={[label distance=\y1 cm]\z1: $1$},circle,fill=blue,draw=black,scale=\x1](C4) at (-1.5+\xs,2.5+\yss) {};
\node [label={[label distance=\y1 cm]\z1: $2$},circle,fill=blue,draw=black,scale=\x1](C7) at (-3+\xs,1+\yss) {};

\draw [line width=\w1 mm] (C1)--(C4)--(C7); 

\end{tikzpicture} 
\caption{Graphs $\Pi_n^{3}$ for $n=1,2,3$ with eccentricity of every vertex.} \label{fig:Pi_3_ecentricity}
\end{figure}

\section{Hamiltonicity}

A \textit{Hamiltonian path} in a graph $G$ is a path that visits each vertex
of $G$ exactly once. A {\em Hamiltonian cycle} in $G$ is a cycle that visits
each vertex exactly once. A graph containing a Hamiltonian cycle is called
{\em Hamiltonian graph}. In this section we examine some Hamiltonicity-related
properties of metallic cubes.

In their paper \cite{Hamilton}, Liu et al. showed that all (generalized)
Fibonacci cubes contain a Hamiltonian path. In the next theorem we show that
every metallic cube contains a Hamiltonian path.   

\begin{theorem}\label{tm:Hamil} For all $a,n\geq 1$, $\Pi^a_n$ contains a
Hamiltonian path.
\end{theorem}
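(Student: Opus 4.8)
The plan is to proceed by induction on $n$, using the canonical decomposition from Theorem~\ref{candec}. The base cases $n=1$ and $n=2$ are direct: $\Pi^a_1$ is the path $P_a$, which trivially has a Hamiltonian path, and $\Pi^a_2$ is the $a\times a$ grid with the extra pendant-ish vertex $0a$ attached to $0(a-1)$; one writes down an explicit snake-like traversal of the grid that ends (or begins) at $0(a-1)$, then steps out to $0a$. For the inductive step, suppose every $\Pi^a_k$ with $k<n$ has a Hamiltonian path, and recall $\Pi^a_n=\Pi^a_{n-1}\oplus\cdots\oplus\Pi^a_{n-1}\oplus\Pi^a_{n-2}$, where the copies of $\Pi^a_{n-1}$ are indexed by the leading letter $0,1,\dots,a-1$ and are connected consecutively (leading letter $k$ to $k+1$), while the $\Pi^a_{n-2}$ copy (vertices $0a\beta$) hangs off the leading-$0$ copy via edges $0a\beta \sim 0(a-1)\beta$.

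The key idea is to strengthen the induction hypothesis: I would prove that $\Pi^a_n$ has a Hamiltonian path whose two endpoints can be taken to be two prescribed vertices of a convenient form — concretely, a Hamiltonian path from $0^n$ to some designated vertex, or more flexibly a path whose endpoints both lie in the "leading-$0$" sub-copy and whose restriction to that sub-copy we control. The reason the strengthening is needed: to glue the $a$ copies of $\Pi^a_{n-1}$ into one path, I want a Hamiltonian path $H_k$ in copy $k$ for each $k$, together with edges of the form $k\alpha \sim (k+1)\alpha$ joining the end of $H_k$ to the start of $H_{k+1}$; this forces the endpoint of $H_k$ and the startpoint of $H_{k+1}$ to have the same suffix $\alpha$, which is exactly a statement about endpoints. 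A clean way to arrange this is to fix one Hamiltonian path $P^\star$ of $\Pi^a_{n-1}$ with endpoints $u,w$; then in copy $k$ use $P^\star$ traversed forward if $k$ is even and backward if $k$ is odd, so that consecutive copies meet at a common suffix ($w$ then $w$, or $u$ then $u$), giving the needed connecting edge $k\,\cdot\ \sim (k+1)\,\cdot$. This produces a Hamiltonian path of $P_a\square\Pi^a_{n-1}$ with both endpoints sitting in copy $0$ or copy $a-1$.

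It remains to absorb the copy of $\Pi^a_{n-2}$. Arrange the traversal so that the leading-$0$ copy is visited \emph{first}, i.e. the global path starts in copy $0$, and moreover starts at a vertex of the form $0\cdot 0(a-1)\beta_0 = 0\,0(a-1)\beta_0$ whose suffix after the first two letters is the start of a Hamiltonian path of $\Pi^a_{n-2}$. Before entering the main snake through copies $0,1,\dots,a-1$, detour: begin at $0a\beta_0$ (the start of a Hamiltonian path of $\Pi^a_{n-2}$, existing by induction), traverse all of $\Pi^a_{n-2}$ ending at $0a\beta_1$, then step across the edge $0a\beta_1 \sim 0(a-1)\beta_1$ into copy $0$ at the vertex $0\,0(a-1)\beta_1$, and continue with the Hamiltonian path of copy $0$ started at that vertex, then snake through copies $1,\dots,a-1$ as above. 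For this to work I need the chosen Hamiltonian path of $\Pi^a_{n-1}$ (the one used in copy $0$) to have one endpoint of the form $0(a-1)\beta_1$ — again an endpoint-prescription, handled by the strengthened hypothesis, where I track that $\Pi^a_{n-1}$ admits a Hamiltonian path with one endpoint equal to $\rho(\beta_1)$-compatible string $0(a-1)\beta_1$ for a suitable $\beta_1$; concretely one can take $\beta_1 = (a-1)^{n-2}$ and the endpoint $(a-1)^n$, which is always a valid vertex.

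The main obstacle is bookkeeping the endpoint conditions coherently across the two nested inductions (one on $n$ for $\Pi^a_{n-1}$, one level deeper for $\Pi^a_{n-2}$): I must choose, once and for all, a \emph{canonical} pair of endpoints for the Hamiltonian path of each $\Pi^a_m$ — say $0^m$ and $(a-1)^m$ — and verify that the gluing construction above both \emph{uses} only paths with those endpoints and \emph{produces} a path with those endpoints for $\Pi^a_n$. Checking that the parity-alternating snake through the $a$ grid-copies, plus the $\Pi^a_{n-2}$ detour, terminates at $(a-1)^n$ and starts at $0^n$ (or the reverse) is the crux; once the endpoint invariant is pinned down, everything else is routine verification that the claimed connecting edges $k\alpha\sim(k+1)\alpha$ and $0a\beta\sim 0(a-1)\beta$ genuinely exist in $\Pi^a_n$, which is immediate from the adjacency definition. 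Small cases $a=1$ (where $\Pi^1_n=\Gamma_{n-1}$, already known Hamiltonian-path-connected) and $a=2$ may need a separate remark, since the grid-snake argument degenerates, but the same inductive skeleton applies.
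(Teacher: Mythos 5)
Your overall strategy is the same as the paper's: induct on $n$ via the canonical decomposition, strengthen the hypothesis to prescribe both endpoints, traverse the $a$ copies of $\Pi^a_{n-1}$ alternately forward and backward so that consecutive copies meet at a common suffix, and absorb the $0a\Pi^a_{n-2}$ block as a detour at one end. However, the step you yourself flag as ``the crux'' --- pinning down a coherent endpoint invariant --- is exactly where the proposal fails as written, and it is not routine. Your tentative canonical endpoints $0\cdots 0$ and $(a-1)\cdots(a-1)$ cannot work: since the block $0a\Pi^a_{n-2}$ is attached only to the leading-$0$ copy and is traversed as an initial detour, one endpoint of the resulting Hamiltonian path of $\Pi^a_n$ necessarily lies inside that block, i.e.\ begins with $0a$; neither $0\cdots 0$ nor $(a-1)\cdots(a-1)$ does. (Inserting the block in the middle of copy $0$ instead would require a Hamiltonian path of $\Pi^a_{n-2}$ between two \emph{adjacent} vertices, aligned with an edge actually used by copy $0$'s path --- a stronger property you have not established.) So the invariant must itself involve the letter $a$, and it must reproduce itself one level down.

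The paper resolves this by taking, for $a$ even, the endpoints $0a0a\cdots0a(0)$ and $(a-1)\cdots(a-1)$: then $0aH_{n-2}$ starts at the required vertex and ends at $0a(a-1)\cdots(a-1)$, which is adjacent to the endpoint $0(a-1)\cdots(a-1)$ of copy $0$, and the alternating snake through copies $0,1,\dots,a-1$ finishes, because $a-1$ is odd, at $(a-1)\cdots(a-1)$. For $a$ odd the same parity count shows the snake finishes at the \emph{other} end of the last copy, so that pair of endpoints does not close the induction; the paper instead uses the period-three patterns $0a(a-1)0a(a-1)\cdots$ and $(a-1)0a(a-1)0a\cdots$ together with a case analysis on $n \bmod 3$. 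This dependence on the parity of $a$ (and, for $a$ odd, on $n \bmod 3$) is the substantive content missing from your argument; until you exhibit a concrete endpoint pair and verify that the glued path has exactly those endpoints at level $n$, the induction does not close.
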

\begin{proof}
Proof is by induction on $n$. We first consider the case of $a$ even. We
wish to prove that, not only $\Pi^a_n$ contains Hamiltonian path, but it
contains a path starting at $0a\cdots0a(0)$ and ending at $(a-1)\cdots(a-1)$.
For $n=1$, graph $\Pi^a_1$ is path on $a$ vertices and the claim is
obviously valid. For $n=2$, graph $\Pi^a_2$ is an $a\times a$ grid with
addition of one vertex $0a$ being adjacent to the vertex $0(a-1)$. So, for
even $a$, a Hamiltonian path is
$0a\to\cdots\to 00\to 10\to \cdots \to 1(a-1) \to\cdots\to (a-1)(a-1)$.
Figure \ref{fig:Hamiltonian path n=2} shows a Hamiltonian path in $\Pi^2_2$
and in $\Pi^3_2$. Now suppose that $\Pi^a_n$ contains a Hamiltonian path for
all $k<n$. By assumption, let $H_{n-1}$ and $H_{n-2}$ denote Hamiltonian
paths in $\Pi^a_{n-1}$ and $\Pi^a_{n-2}$, respectively. Paths $H_{n-1}$ and
$H_{n-2}$ have starting point $0a0a\cdots0a(0)$ and end-point at
$(a-1)\cdots(a-1)$. Let $0H_{n-1},\dots,(a-1)H_{n-1}$  and  $0aH_{n-2}$
denote paths is $\Pi^a_n$ obtained from  $H_{n-1}$ and $H_{n-2}$ by appending
$0,1,\dots,a-1$ or $0a$ on each vertex and let
$0\overline{H}_{n-1},\dots,(a-1)\overline{H}_{n-1}$  and 
$0a\overline{H}_{n-2}$ denote their reverse paths, i.e., paths starting at
$(a-1)\cdots(a-1)$ and ending at $0a\cdots0a(0)$. Path $0aH_{n-2}$ ends at
$0a(a-1)\cdots(a-1)$ and path $0\overline{H}_{n-1}$ begins at
$0(a-1)\cdots(a-1)$. Since vertices $0a(a-1)\cdots(a-1)$ and
$0(a-1)\cdots(a-1)$ are adjacent in $\Pi^a_n$, those two paths can be
concatenated. Hence, by concatenating
$0aH_{n-2}, 0\overline{H}_{n-1}, 1H_{n-1},\cdots, (a-2)\overline{H}_{n-1}$
and $(a-1)H_{n-1}$, we obtain a Hamiltonian path in $\Pi^a_n$ that begins at
$0a\cdots0a(0)$ and ends at $(a-1)\cdots(a-1)$.

Let $a$ now be odd. Now we wish to prove that $\Pi^a_n$ contains a Hamiltonian
path with endpoints at $0a(a-1)\cdots0a(a-1)$ and $(a-1)0a\cdots(a-1)0a$. For
example, if $a=3$ and $n=1,2,3,4,5$, the end points are $0$, $2$, $03$, $20$,
$032$, $203$, $0320$, $2032$, $03203$ and $20320$, respectively. The rest
of the proof is carried out for $n$ where $n\mod 3=0$. Cases for $n\mod 3=1$
or $2$ are similar, so we omit the details. The base of induction is easy to
check, similar as for even $a$. By assumption, let $H_{n-1}$ and $H_{n-2}$
denote Hamiltonian paths in $\Pi^a_{n-1}$ and $\Pi^a_{n-2}$, respectively.
Path $H_{n-1}$ has the starting point $0a(a-1)0a(a-1)\cdots0a$ and the
end-point $(a-1)0a(a-1)0a\cdots(a-1)0$, and path $H_{n-2}$ starts at
$0a(a-1)0a(a-1)\cdots 0$ and ends at $(a-1)0a(a-1)0a\cdots(a-1)$. Let
$0H_{n-1},\dots,(a-1)H_{n-1}$ and $0aH_{n-2}$ denote paths in $\Pi^a_n$
obtained from $H_{n-1}$ and $H_{n-2}$ by appending $0,1,\dots,a-1$ and $0a$,
respectively, on each vertex and let
$0\overline{H}_{n-1},\dots,(a-1)\overline{H}_{n-1}$ 
and $0a\overline{H}_{n-2}$ denote their reverse paths, with start- and
end-points switched. Path $0aH_{n-2}$ ends at $0a(a-1)\cdots 0a(a-1)$ and
path $0\overline{H}_{n-1}$ begins at $0(a-1)0a\cdots(a-1)0$. Since vertices
$0a(a-1)\cdots 0a(a-1)$ and $0(a-1)0a\cdots(a-1)0$ are adjacent in $\Pi^a_n$,
those two paths can be concatenated. Hence, by concatenating
$0aH_{n-2}, 0\overline{H}_{n-1}, 1H_{n-1},2\overline{H}_{n-1} \cdots, (a-2)H_{n-1}$
and $(a-1)\overline{H}_{n-1}$, we obtain a Hamiltonian path in $\Pi^a_n$ that
begins at $0a(a-1)\cdots0a(a-1)$ and ends at $(a-1)0a\cdots(a-1)0a$.  
\end{proof}
As an example, Figure \ref{fig:Hamiltonian path n=4} shows Hamiltonian paths
in $\Pi^2_4$ and in $\Pi^3_4$.

\begin{figure} \centering  \begin{tikzpicture}[scale=1]
\tikzmath{\x1 = 0.35; \y1 =-0.05; \z1=180; \w1=0.2; \xs=-8; \ys=0; \yss=-5;
\x2 = \x1 + 1; \y2 =\y1 +3; } 
\small
\node [label={[label distance=\y1 cm]\z1: $01$},circle,fill=blue,draw=black,scale=\x1](A14) at (-3,-1) {};
\node [label={[label distance=\y1 cm]\z1: $00$},circle,fill=blue,draw=black,scale=\x1](A15) at (-2,0) {};
\node [label={[label distance=\y1 cm]\z1: $11$},circle,fill=blue,draw=black,scale=\x1](A18) at (-3,0) {};
\node [label={[label distance=\y1 cm]\z1: $10$},circle,fill=blue,draw=black,scale=\x1](A19) at (-2,1) {};
\node [label={[label distance=\y1 cm]\z1: $02$},circle,fill=blue,draw=black,scale=\x1](A22) at (-4,-2) {};

\draw [line width=\w1 mm] 
(A18)--(A19) (A18)--(A14)--(A22) (A15)--(A19); 

\draw [line width={\w1+0.5 mm} ,red] (A22)--(A14)--(A15)--(A19)--(A18); 
\end{tikzpicture} \begin{tikzpicture}[scale=0.75]
\tikzmath{\x1 = 0.35; \y1 =-0.05; \z1=180; \w1=0.2; \xs=-8; \ys=0; \yss=-5;
\x2 = \x1 + 1; \y2 =\y1 +3; } 
\small
{\node [label={[label distance=\y1 cm]\z1: $00$},circle,fill=blue,draw=black,scale=\x1](B1) at (0+\xs,4+\ys) {};
\node [label={[label distance=\y1 cm]\z1: $10$},circle,fill=blue,draw=black,scale=\x1](B2) at (0+\xs,5+\ys) {};
\node [label={[label distance=\y1 cm]\z1: $20$},circle,fill=blue,draw=black,scale=\x1](B3) at (0+\xs,6+\ys) {};
\node [label={[label distance=\y1 cm]\z1: $01$},circle,fill=blue,draw=black,scale=\x1](B4) at (-1.5+\xs,2.5+\ys) {};
\node [label={[label distance=\y1 cm]\z1: $11$},circle,fill=blue,draw=black,scale=\x1](B5) at (-1.5+\xs,3.5+\ys) {};
\node [label={[label distance=\y1 cm]\z1: $21$},circle,fill=blue,draw=black,scale=\x1](B6) at (-1.5+\xs,4.5+\ys) {};
\node [label={[label distance=\y1 cm]\z1: $02$},circle,fill=blue,draw=black,scale=\x1](B7) at (-3+\xs,1+\ys) {};
\node [label={[label distance=\y1 cm]\z1: $12$},circle,fill=blue,draw=black,scale=\x1](B8) at (-3+\xs,2+\ys) {};
\node [label={[label distance=\y1 cm]\z1: $22$},circle,fill=blue,draw=black,scale=\x1](B9) at (-3+\xs,3+\ys) {};
\node [label={[label distance=\y1 cm]\z1: $03$},circle,fill=blue,draw=black,scale=\x1](B28) at (-4.5+\xs,-0.5+\ys) {};

\draw [line width=\w1 mm] (B1)--(B2)--(B3) (B4)--(B5)--(B6) (B7)--(B8)--(B9)  (B1)--(B4)--(B7)--(B28) (B2)--(B5)--(B8) (B3)--(B6)--(B9); 

\draw [line width={\w1+0.5 mm},red] (B28)--(B7)--(B4)--(B1)--(B2)--(B5)--(B8)--(B9)--(B6)--(B3); }

\end{tikzpicture}

\caption{Hamiltonian path of $\Pi_2^{2}$ and $\Pi_2^{3}$.} \label{fig:Hamiltonian path n=2}
\end{figure}
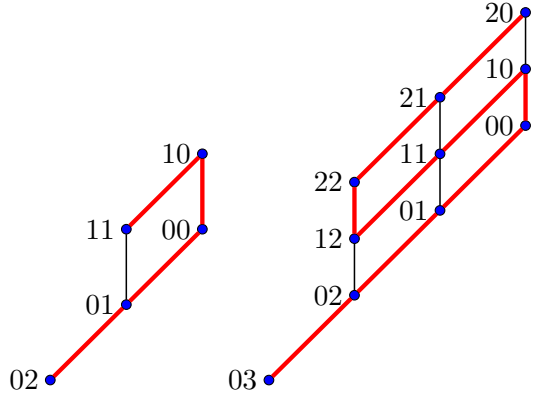

\begin{figure}[h!] \centering 
\begin{tikzpicture}[scale=1]
\tikzmath{\x1 = 0.35; \y1 =-0.05; \z1=180; \w1=0.2; \xs=-8; \ys=0; \yss=-5;
\x2 = \x1 + 1; \y2 =\y1 +3; } 
\scriptsize
\node [label={[label distance=\y1 cm]\z1: $0001$},circle,fill=blue,draw=black,scale=\x1](A1) at (0,1) {};
\node [label={[label distance=\y1 cm]\z1: $0101$},circle,fill=blue,draw=black,scale=\x1](A2) at (1,0) {};
\node [label={[label distance=\y1 cm]\z1: $0100$},circle,fill=blue,draw=black,scale=\x1](A3) at (2,1) {};
\node [label={[label distance=\y1 cm]\z1: $0000$},circle,fill=blue,draw=black,scale=\x1](A4) at (1,2) {};
\node [label={[label distance=\y1 cm]\z1: $0011$},circle,fill=blue,draw=black,scale=\x1](A5) at (0,2) {};
\node [label={[label distance=\y1 cm]\z1: $0111$},circle,fill=blue,draw=black,scale=\x1](A6) at (1,1) {};
\node [label={[label distance=\y1 cm]\z1: $0110$},circle,fill=blue,draw=black,scale=\x1](A7) at (2,2) {};
\node [label={[label distance=\y1 cm]\z1: $0010$},circle,fill=blue,draw=black,scale=\x1](A8) at (1,3) {};
\node [label={[label distance=\y1 cm]\z1: $0002$},circle,fill=blue,draw=black,scale=\x1](A9) at (-1,0) {};
\node [label={[label distance=\y1 cm]\z1: $0102$},circle,fill=blue,draw=black,scale=\x1](A10) at (0,-1) {};
\node [label={[label distance=\y1 cm]\z1: $0021$},circle,fill=blue,draw=black,scale=\x1](A11) at (-1,3) {};
\node [label={[label distance=\y1 cm]\z1: $0020$},circle,fill=blue,draw=black,scale=\x1](A12) at (0,4) {};
\node [label={[label distance=\y1 cm]\z1: $1001$},circle,fill=blue,draw=black,scale=\x1](A13) at (-4,0) {};
\node [label={[label distance=\y1 cm]\z1: $1101$},circle,fill=blue,draw=black,scale=\x1](A14) at (-3,-1) {};
\node [label={[label distance=\y1 cm]\z1: $1100$},circle,fill=blue,draw=black,scale=\x1](A15) at (-2,0) {};
\node [label={[label distance=\y1 cm]\z1: $1000$},circle,fill=blue,draw=black,scale=\x1](A16) at (-3,1) {};
\node [label={[label distance=\y1 cm]\z1: $1011$},circle,fill=blue,draw=black,scale=\x1](A17) at (-4,1) {};
\node [label={[label distance=\y1 cm]\z1: $1111$},circle,fill=blue,draw=black,scale=\x1](A18) at (-3,0) {};
\node [label={[label distance=\y1 cm]\z1: $1110$},circle,fill=blue,draw=black,scale=\x1](A19) at (-2,1) {};
\node [label={[label distance=\y1 cm]\z1: $1010$},circle,fill=blue,draw=black,scale=\x1](A20) at (-3,2) {};
\node [label={[label distance=\y1 cm]\z1: $1002$},circle,fill=blue,draw=black,scale=\x1](A21) at (-5,-1) {};
\node [label={[label distance=\y1 cm]\z1: $1102$},circle,fill=blue,draw=black,scale=\x1](A22) at (-4,-2) {};
\node [label={[label distance=\y1 cm]\z1: $1021$},circle,fill=blue,draw=black,scale=\x1](A23) at (-5,2) {};
\node [label={[label distance=\y1 cm]\z1: $1020$},circle,fill=blue,draw=black,scale=\x1](A24) at (-4,3) {};
\node [label={[label distance=\y1 cm]\z1: $0201$},circle,fill=blue,draw=black,scale=\x1](A25) at (5,1) {};
\node [label={[label distance=\y1 cm]\z1: $0200$},circle,fill=blue,draw=black,scale=\x1](A26) at (6,2) {};
\node [label={[label distance=\y1 cm]\z1: $0210$},circle,fill=blue,draw=black,scale=\x1](A27) at (6,3) {};
\node [label={[label distance=\y1 cm]\z1: $0211$},circle,fill=blue,draw=black,scale=\x1](A28) at (5,2) {};
\node [label={[label distance=\y1 cm]\z1: $0202$},circle,fill=blue,draw=black,scale=\x1](A29) at (4,0) {};

\draw [line width=\w1 mm] (A1)--(A2)--(A3)--(A4)--(A1)--(A5)--(A6)--(A7)--(A8)--(A5)--(A11)--(A12)--(A8) (A6)--(A2)--(A10)--(A9)--(A1) (A3)--(A7) (A4)--(A8) 
(A13)--(A14)--(A15)--(A16)--(A13)--(A17)--(A18)--(A19)--(A20)--(A17)--(A23)--(A24)--(A20) (A18)--(A14)--(A22)--(A21)--(A13) (A15)--(A19) (A16)--(A20)  (A25)--(A26)--(A27)--(A28)--(A25)--(A29) (A22)--(A10)--(A29) (A14)--(A2)--(A25) (A15)--(A3)--(A26) (A18)--(A6)--(A28) (A19)--(A7)--(A27)  (A21)--(A9) (A13)--(A1) (A16)--(A4) (A17)--(A5) (A20)--(A8) (A23)--(A11) (A24)--(A12); 

\draw [line width={\w1+0.5 mm} ,red] (A29)--(A25)--(A26)--(A27)--(A28)--(A6)--(A7)--(A3)--(A2)--(A10)--(A9)--(A1)--(A4)--(A8)--(A5)--(A11)--(A12)--(A24)--(A23)--(A17)--(A20)--(A16)--(A13)--(A21)--(A22)--(A14)--(A15)--(A19)--(A18)
; 

\end{tikzpicture} \begin{tikzpicture}[scale=0.47]
\tikzmath{\x1 = 0.35; \y1 =-0.05; \a1=110; \b1=10; \a2=220; \b2=20; \a3=270; \b3=24.5;   \z1=90; \w1=0.10; \xs=1; \ys=0; \yss=-5; \op=0.50;
\x2 = \x1 + 1; \y2 =\y1 +3; } 
\tiny
\node [label={[label distance=\y1 cm,]\z1: $2000$},circle,fill=blue,draw=black,scale=\x1](A1) at (0,4) {};
\node [label={[label distance=\y1 cm]\z1: $2010$},circle,fill=blue,draw=black,scale=\x1](A2) at (0,5) {};
\node [label={[label distance=\y1 cm]\z1: $2020$},circle,fill=blue,draw=black,scale=\x1](A3) at (0,6) {};
\node [label={[label distance=\y1 cm]\z1: $2001$},circle,fill=blue,draw=black,scale=\x1](A4) at (-1.5,2.5) {};
\node [label={[label distance=\y1 cm]\z1: $2011$},circle,fill=blue,draw=black,scale=\x1](A5) at (-1.5,3.5) {};
\node [label={[label distance=\y1 cm]\z1: $2021$},circle,fill=blue,draw=black,scale=\x1](A6) at (-1.5,4.5) {};
\node [label={[label distance=\y1 cm]\z1: $2002$},circle,fill=blue,draw=black,scale=\x1](A7) at (-3,1) {};
\node [label={[label distance=\y1 cm]\z1: $2012$},circle,fill=blue,draw=black,scale=\x1](A8) at (-3,2) {};
\node [label={[label distance=\y1 cm]\z1: $2022$},circle,fill=blue,draw=black,scale=\x1](A9) at (-3,3) {};
\node [label={[label distance=\y1 cm]\z1: $2100$},circle,fill=blue,draw=black,scale=\x1](A10) at (1.5,2.5) {};
\node [label={[label distance=\y1 cm]\z1: $2110$},circle,fill=blue,draw=black,scale=\x1](A11) at (1.5,3.5) {};
\node [label={[label distance=\y1 cm]\z1: $2120$},circle,fill=blue,draw=black,scale=\x1](A12) at (1.5,4.5) {};
\node [label={[label distance=\y1 cm]\z1: $2101$},circle,fill=blue,draw=black,scale=\x1](A13) at (0,1) {};
\node [label={[label distance=\y1 cm]\z1: $2111$},circle,fill=blue,draw=black,scale=\x1](A14) at (0,2) {};
\node [label={[label distance=\y1 cm]\z1: $2121$},circle,fill=blue,draw=black,scale=\x1](A15) at (0,3) {};
\node [label={[label distance=\y1 cm]\z1: $2102$},circle,fill=blue,draw=black,scale=\x1](A16) at (-1.5,-0.5) {};
\node [label={[label distance=\y1 cm]\z1: $2112$},circle,fill=blue,draw=black,scale=\x1](A17) at (-1.5,0.5) {};
\node [label={[label distance=\y1 cm]\z1: $2122$},circle,fill=blue,draw=black,scale=\x1](A18) at (-1.5,1.5) {};
\node [label={[label distance=\y1 cm]\z1: $2200$},circle,fill=blue,draw=black,scale=\x1](A19) at (3,1) {};
\node [label={[label distance=\y1 cm]\z1: $2210$},circle,fill=blue,draw=black,scale=\x1](A20) at (3,2) {};
\node [label={[label distance=\y1 cm]\z1: $2220$},circle,fill=blue,draw=black,scale=\x1](A21) at (3,3) {};
\node [label={[label distance=\y1 cm]\z1: $2201$},circle,fill=blue,draw=black,scale=\x1](A22) at (1.5,-0.5) {};
\node [label={[label distance=\y1 cm]\z1: $2211$},circle,fill=blue,draw=black,scale=\x1](A23) at (1.5,0.5) {};
\node [label={[label distance=\y1 cm]\z1: $2221$},circle,fill=blue,draw=black,scale=\x1](A24) at (1.5,1.5) {};
\node [label={[label distance=\y1 cm]\z1: $2202$},circle,fill=blue,draw=black,scale=\x1](A25) at (0,-2) {};
\node [label={[label distance=\y1 cm]\z1: $2212$},circle,fill=blue,draw=black,scale=\x1](A26) at (0,-1) {};
\node [label={[label distance=\y1 cm]\z1: $2222$},circle,fill=blue,draw=black,scale=\x1](A27) at (0,0) {};
\node [label={[label distance=\y1 cm]\z1: $2003$},circle,fill=blue,draw=black,scale=\x1](A28) at (-4.5,-0.5) {};
\node [label={[label distance=\y1 cm]\z1: $2103$},circle,fill=blue,draw=black,scale=\x1](A29) at (-3,-2) {};
\node [label={[label distance=\y1 cm]\z1: $2203$},circle,fill=blue,draw=black,scale=\x1](A30) at (-1.5,-3.5) {};
\node [label={[label distance=\y1 cm]\z1: $2030$},circle,fill=blue,draw=black,scale=\x1](A31) at (-1.5,7.5) {};
\node [label={[label distance=\y1 cm]\z1: $2031$},circle,fill=blue,draw=black,scale=\x1](A32) at (-3,6) {};
\node [label={[label distance=\y1 cm]\z1: $2032$},circle,fill=blue,draw=black,scale=\x1](A33) at (-4.5,4.5) {};

\node [label={[label distance=\y1 cm]\z1: $1000$},xshift=\a1,yshift=\b1,circle,fill=blue,draw=black,scale=\x1](A34) at (0,4) {};
\node [label={[label distance=\y1 cm]\z1: $1010$},xshift=\a1,yshift=\b1,circle,fill=blue,draw=black,scale=\x1](A35) at (0,5) {};
\node [label={[label distance=\y1 cm]\z1: $1020$},xshift=\a1,yshift=\b1,circle,fill=blue,draw=black,scale=\x1](A36) at (0,6) {};
\node [label={[label distance=\y1 cm]\z1: $1001$},xshift=\a1,yshift=\b1,circle,fill=blue,draw=black,scale=\x1](A37) at (-1.5,2.5) {};
\node [label={[label distance=\y1 cm]\z1: $1011$},xshift=\a1,yshift=\b1,circle,fill=blue,draw=black,scale=\x1](A38) at (-1.5,3.5) {};
\node [label={[label distance=\y1 cm]\z1: $1021$},xshift=\a1,yshift=\b1,circle,fill=blue,draw=black,scale=\x1](A39) at (-1.5,4.5) {};
\node [label={[label distance=\y1 cm]\z1: $1002$},xshift=\a1,yshift=\b1,circle,fill=blue,draw=black,scale=\x1](A40) at (-3,1) {};
\node [label={[label distance=\y1 cm]\z1: $1012$},xshift=\a1,yshift=\b1,circle,fill=blue,draw=black,scale=\x1](A41) at (-3,2) {};
\node [label={[label distance=\y1 cm]\z1: $1022$},xshift=\a1,yshift=\b1,circle,fill=blue,draw=black,scale=\x1](A42) at (-3,3) {};
\node [label={[label distance=\y1 cm]\z1: $1100$},xshift=\a1,yshift=\b1,circle,fill=blue,draw=black,scale=\x1](A43) at (1.5,2.5) {};
\node [label={[label distance=\y1 cm]\z1: $1110$},xshift=\a1,yshift=\b1,circle,fill=blue,draw=black,scale=\x1](A44) at (1.5,3.5) {};
\node [label={[label distance=\y1 cm]\z1: $1120$},xshift=\a1,yshift=\b1,circle,fill=blue,draw=black,scale=\x1](A45) at (1.5,4.5) {};
\node [label={[label distance=\y1 cm]\z1: $1101$},xshift=\a1,yshift=\b1,circle,fill=blue,draw=black,scale=\x1](A46) at (0,1) {};
\node [label={[label distance=\y1 cm]\z1: $1111$},xshift=\a1,yshift=\b1,circle,fill=blue,draw=black,scale=\x1](A47) at (0,2) {};
\node [label={[label distance=\y1 cm]\z1: $1121$},xshift=\a1,yshift=\b1,circle,fill=blue,draw=black,scale=\x1](A48) at (0,3) {};
\node [label={[label distance=\y1 cm]\z1: $1102$},xshift=\a1,yshift=\b1,circle,fill=blue,draw=black,scale=\x1](A49) at (-1.5,-0.5) {};
\node [label={[label distance=\y1 cm]\z1: $1112$},xshift=\a1,yshift=\b1,circle,fill=blue,draw=black,scale=\x1](A50) at (-1.5,0.5) {};
\node [label={[label distance=\y1 cm]\z1: $1122$},xshift=\a1,yshift=\b1,circle,fill=blue,draw=black,scale=\x1](A51) at (-1.5,1.5) {};
\node [label={[label distance=\y1 cm]\z1: $1200$},xshift=\a1,yshift=\b1,circle,fill=blue,draw=black,scale=\x1](A52) at (3,1) {};
\node [label={[label distance=\y1 cm]\z1: $1210$},xshift=\a1,yshift=\b1,circle,fill=blue,draw=black,scale=\x1](A53) at (3,2) {};
\node [label={[label distance=\y1 cm]\z1: $1220$},xshift=\a1,yshift=\b1,circle,fill=blue,draw=black,scale=\x1](A54) at (3,3) {};
\node [label={[label distance=\y1 cm]\z1: $1201$},xshift=\a1,yshift=\b1,circle,fill=blue,draw=black,scale=\x1](A55) at (1.5,-0.5) {};
\node [label={[label distance=\y1 cm]\z1: $1211$},xshift=\a1,yshift=\b1,circle,fill=blue,draw=black,scale=\x1](A56) at (1.5,0.5) {};
\node [label={[label distance=\y1 cm]\z1: $1221$},xshift=\a1,yshift=\b1,circle,fill=blue,draw=black,scale=\x1](A57) at (1.5,1.5) {};
\node [label={[label distance=\y1 cm]\z1: $1202$},xshift=\a1,yshift=\b1,circle,fill=blue,draw=black,scale=\x1](A58) at (0,-2) {};
\node [label={[label distance=\y1 cm]\z1: $1212$},xshift=\a1,yshift=\b1,circle,fill=blue,draw=black,scale=\x1](A59) at (0,-1) {};
\node [label={[label distance=\y1 cm]\z1: $1222$},xshift=\a1,yshift=\b1,circle,fill=blue,draw=black,scale=\x1](A60) at (0,0) {};
\node [label={[label distance=\y1 cm]\z1: $1003$},xshift=\a1,yshift=\b1,circle,fill=blue,draw=black,scale=\x1](A61) at (-4.5,-0.5) {};
\node [label={[label distance=\y1 cm]\z1: $1103$},xshift=\a1,yshift=\b1,circle,fill=blue,draw=black,scale=\x1](A62) at (-3,-2) {};
\node [label={[label distance=\y1 cm]\z1: $1203$},xshift=\a1,yshift=\b1,circle,fill=blue,draw=black,scale=\x1](A63) at (-1.5,-3.5) {};
\node [label={[label distance=\y1 cm]\z1: $1030$},xshift=\a1,yshift=\b1,circle,fill=blue,draw=black,scale=\x1](A64) at (-1.5,7.5) {};
\node [label={[label distance=\y1 cm]\z1: $1031$},xshift=\a1,yshift=\b1,circle,fill=blue,draw=black,scale=\x1](A65) at (-3,6) {};
\node [label={[label distance=\y1 cm]\z1: $1032$},xshift=\a1,yshift=\b1,circle,fill=blue,draw=black,scale=\x1](A66) at ({-4.5},4.5) {};

\node [label={[label distance=\y1 cm]\z1: $0000$},xshift=\a2,yshift=\b2,circle,fill=blue,draw=black,scale=\x1](A67) at (0,4) {};
\node [label={[label distance=\y1 cm]\z1: $0010$},xshift=\a2,yshift=\b2,circle,fill=blue,draw=black,scale=\x1](A68) at (0,5) {};
\node [label={[label distance=\y1 cm]\z1: $0020$},xshift=\a2,yshift=\b2,circle,fill=blue,draw=black,scale=\x1](A69) at (0,6) {};
\node [label={[label distance=\y1 cm]\z1: $0001$},xshift=\a2,yshift=\b2,circle,fill=blue,draw=black,scale=\x1](A70) at (-1.5,2.5) {};
\node [label={[label distance=\y1 cm]\z1: $0011$},xshift=\a2,yshift=\b2,circle,fill=blue,draw=black,scale=\x1](A71) at (-1.5,3.5) {};
\node [label={[label distance=\y1 cm]\z1: $0021$},xshift=\a2,yshift=\b2,circle,fill=blue,draw=black,scale=\x1](A72) at (-1.5,4.5) {};
\node [label={[label distance=\y1 cm]\z1: $0002$},xshift=\a2,yshift=\b2,circle,fill=blue,draw=black,scale=\x1](A73) at (-3,1) {};
\node [label={[label distance=\y1 cm]\z1: $0012$},xshift=\a2,yshift=\b2,circle,fill=blue,draw=black,scale=\x1](A74) at (-3,2) {};
\node [label={[label distance=\y1 cm]\z1: $0022$},xshift=\a2,yshift=\b2,circle,fill=blue,draw=black,scale=\x1](A75) at (-3,3) {};
\node [label={[label distance=\y1 cm]\z1: $0100$},xshift=\a2,yshift=\b2,circle,fill=blue,draw=black,scale=\x1](A76) at (1.5,2.5) {};
\node [label={[label distance=\y1 cm]\z1: $0110$},xshift=\a2,yshift=\b2,circle,fill=blue,draw=black,scale=\x1](A77) at (1.5,3.5) {};
\node [label={[label distance=\y1 cm]\z1: $0120$},xshift=\a2,yshift=\b2,circle,fill=blue,draw=black,scale=\x1](A78) at (1.5,4.5) {};
\node [label={[label distance=\y1 cm]\z1: $0101$},xshift=\a2,yshift=\b2,circle,fill=blue,draw=black,scale=\x1](A79) at (0,1) {};
\node [label={[label distance=\y1 cm]\z1: $0111$},xshift=\a2,yshift=\b2,circle,fill=blue,draw=black,scale=\x1](A80) at (0,2) {};
\node [label={[label distance=\y1 cm]\z1: $0121$},xshift=\a2,yshift=\b2,circle,fill=blue,draw=black,scale=\x1](A81) at (0,3) {};
\node [label={[label distance=\y1 cm]\z1: $0102$},xshift=\a2,yshift=\b2,circle,fill=blue,draw=black,scale=\x1](A82) at (-1.5,-0.5) {};
\node [label={[label distance=\y1 cm]\z1: $0112$},xshift=\a2,yshift=\b2,circle,fill=blue,draw=black,scale=\x1](A83) at (-1.5,0.5) {};
\node [label={[label distance=\y1 cm]\z1: $0122$},xshift=\a2,yshift=\b2,circle,fill=blue,draw=black,scale=\x1](A84) at (-1.5,1.5) {};
\node [label={[label distance=\y1 cm]\z1: $0200$},xshift=\a2,yshift=\b2,circle,fill=blue,draw=black,scale=\x1](A85) at (3,1) {};
\node [label={[label distance=\y1 cm]\z1: $0210$},xshift=\a2,yshift=\b2,circle,fill=blue,draw=black,scale=\x1](A86) at (3,2) {};
\node [label={[label distance=\y1 cm]\z1: $0220$},xshift=\a2,yshift=\b2,circle,fill=blue,draw=black,scale=\x1](A87) at (3,3) {};
\node [label={[label distance=\y1 cm]\z1: $0201$},xshift=\a2,yshift=\b2,circle,fill=blue,draw=black,scale=\x1](A88) at (1.5,-0.5) {};
\node [label={[label distance=\y1 cm]\z1: $0211$},xshift=\a2,yshift=\b2,circle,fill=blue,draw=black,scale=\x1](A89) at (1.5,0.5) {};
\node [label={[label distance=\y1 cm]\z1: $0221$},xshift=\a2,yshift=\b2,circle,fill=blue,draw=black,scale=\x1](A90) at (1.5,1.5) {};
\node [label={[label distance=\y1 cm]\z1: $0202$},xshift=\a2,yshift=\b2,circle,fill=blue,draw=black,scale=\x1](A91) at (0,-2) {};
\node [label={[label distance=\y1 cm]\z1: $0212$},xshift=\a2,yshift=\b2,circle,fill=blue,draw=black,scale=\x1](A92) at (0,-1) {};
\node [label={[label distance=\y1 cm]\z1: $0222$},xshift=\a2,yshift=\b2,circle,fill=blue,draw=black,scale=\x1](A93) at (0,0) {};
\node [label={[label distance=\y1 cm]\z1: $0003$},xshift=\a2,yshift=\b2,circle,fill=blue,draw=black,scale=\x1](A94) at (-4.5,-0.5) {};
\node [label={[label distance=\y1 cm]\z1: $0103$},xshift=\a2,yshift=\b2,circle,fill=blue,draw=black,scale=\x1](A95) at (-3,-2) {};
\node [label={[label distance=\y1 cm]\z1: $0203$},xshift=\a2,yshift=\b2,circle,fill=blue,draw=black,scale=\x1](A96) at (-1.5,-3.5) {};
\node [label={[label distance=\y1 cm]\z1: $0030$},xshift=\a2,yshift=\b2,circle,fill=blue,draw=black,scale=\x1](A97) at (-1.5,7.5) {};
\node [label={[label distance=\y1 cm]\z1: $0031$},xshift=\a2,yshift=\b2,circle,fill=blue,draw=black,scale=\x1](A98) at (-3,6) {};
\node [label={[label distance=\y1 cm]\z1: $0032$},xshift=\a2,yshift=\b2,circle,fill=blue,draw=black,scale=\x1](A99) at ({-4.5},4.5) {};

\node [label={[label distance=\y1 cm]\z1: $0300$},xshift=\a3,yshift=\b3,circle,fill=blue,draw=black,scale=\x1](A100) at (3,1) {};
\node [label={[label distance=\y1 cm]\z1: $0310$},xshift=\a3,yshift=\b3,circle,fill=blue,draw=black,scale=\x1](A101) at (3,2) {};
\node [label={[label distance=\y1 cm]\z1: $0320$},xshift=\a3,yshift=\b3,circle,fill=blue,draw=black,scale=\x1](A102) at (3,3) {};
\node [label={[label distance=\y1 cm]\z1: $0301$},xshift=\a3,yshift=\b3,circle,fill=blue,draw=black,scale=\x1](A103) at (1.5,-0.5) {};
\node [label={[label distance=\y1 cm]\z1: $0311$},xshift=\a3,yshift=\b3,circle,fill=blue,draw=black,scale=\x1](A104) at (1.5,0.5) {};
\node [label={[label distance=\y1 cm]\z1: $0321$},xshift=\a3,yshift=\b3,circle,fill=blue,draw=black,scale=\x1](A105) at (1.5,1.5) {};
\node [label={[label distance=\y1 cm]\z1: $0302$},xshift=\a3,yshift=\b3,circle,fill=blue,draw=black,scale=\x1](A106) at (0,-2) {};
\node [label={[label distance=\y1 cm]\z1: $0312$},xshift=\a3,yshift=\b3,circle,fill=blue,draw=black,scale=\x1](A107) at (0,-1) {};
\node [label={[label distance=\y1 cm]\z1: $0322$},xshift=\a3,yshift=\b3,circle,fill=blue,draw=black,scale=\x1](A108) at (0,0) {};
\node [label={[label distance=\y1 cm]\z1: $0303$},xshift=\a3,yshift=\b3,circle,fill=blue,draw=black,scale=\x1](A109) at (-1.5,-3.5) {};

\draw [line width=\w1 mm,opacity=\op] (A1)--(A34)--(A67) (A2)--(A35)--(A68) (A3)--(A36)--(A69) (A4)--(A37)--(A70) (A5)--(A38)--(A71) (A6)--(A39)--(A72) (A7)--(A40)--(A73) (A8)--(A41)--(A74) (A9)--(A42)--(A75) (A10)--(A43)--(A76) (A11)--(A44)--(A77) (A12)--(A45)--(A78) (A13)--(A46)--(A79) (A14)--(A47)--(A80) (A15)--(A48)--(A81) (A16)--(A49)--(A82) (A17)--(A50)--(A83) (A18)--(A51)--(A84) (A19)--(A52)--(A85)--(A100) (A20)--(A53)--(A86)--(A101) (A21)--(A54)--(A87)--(A102) (A22)--(A55)--(A88)--(A103) (A23)--(A56)--(A89)--(A104) (A24)--(A57)--(A90)--(A105) (A25)--(A58)--(A91)--(A106) (A26)--(A59)--(A92)--(A107) (A27)--(A60)--(A93)--(A108) (A28)--(A61)--(A94) (A29)--(A62)--(A95) (A30)--(A63)--(A96)--(A109) (A31)--(A64)--(A97) (A32)--(A65)--(A98) (A33)--(A66)--(A99);

\draw [line width=\w1 mm,opacity=\op] (A1)--(A2)--(A3) (A4)--(A5)--(A6) (A7)--(A8)--(A9) (A10)--(A11)--(A12) (A13)--(A14)--(A15) (A16)--(A17)--(A18) (A19)--(A20)--(A21) (A22)--(A23)--(A24) (A25)--(A26)--(A27) (A1)--(A4)--(A7)--(A28) (A2)--(A5)--(A8) (A3)--(A6)--(A9) (A10)--(A13)--(A16)--(A29) (A11)--(A14)--(A17) (A12)--(A15)--(A18) (A19)--(A22)--(A25)--(A30) (A20)--(A23)--(A26) (A21)--(A24)--(A27) (A1)--(A10)--(A19) (A2)--(A11)--(A20) (A31)--(A3)--(A12)--(A21) (A4)--(A13)--(A22) (A5)--(A14)--(A23) (A32)--(A6)--(A15)--(A24) (A7)--(A16)--(A25) (A8)--(A17)--(A26) (A33)--(A9)--(A18)--(A27)  (A28)--(A29)--(A30) (A31)--(A32)--(A33); 

\draw [line width=\w1 mm,opacity=\op] (A34)--(A35)--(A36) (A37)--(A38)--(A39) (A40)--(A41)--(A42) (A43)--(A44)--(A45) (A46)--(A47)--(A48) (A49)--(A50)--(A51) (A52)--(A53)--(A54) (A55)--(A56)--(A57) (A58)--(A59)--(A60) (A34)--(A37)--(A40)--(A61) (A35)--(A38)--(A41) (A36)--(A39)--(A42) (A43)--(A46)--(A49)--(A62) (A44)--(A47)--(A50) (A45)--(A48)--(A51) (A52)--(A55)--(A58)--(A63) (A53)--(A56)--(A59) (A54)--(A57)--(A60) (A34)--(A43)--(A52) (A35)--(A44)--(A53) (A64)--(A36)--(A45)--(A54) (A37)--(A46)--(A55) (A38)--(A47)--(A56) (A65)--(A39)--(A48)--(A57) (A40)--(A49)--(A58) (A41)--(A50)--(A59) (A66)--(A42)--(A51)--(A60)  (A61)--(A62)--(A63) (A64)--(A65)--(A66); 

\draw [line width=\w1 mm,opacity=\op] (A67)--(A68)--(A69) (A70)--(A71)--(A72) (A73)--(A74)--(A75) (A76)--(A77)--(A78) (A79)--(A80)--(A81) (A82)--(A83)--(A84) (A85)--(A86)--(A87) (A88)--(A89)--(A90) (A91)--(A92)--(A93) (A67)--(A70)--(A73)--(A94) (A68)--(A71)--(A74) (A69)--(A72)--(A75) (A76)--(A79)--(A82)--(A95) (A77)--(A80)--(A83) (A78)--(A81)--(A84) (A85)--(A88)--(A91)--(A96) (A86)--(A89)--(A92) (A87)--(A90)--(A93) (A67)--(A76)--(A85) (A68)--(A77)--(A86) (A97)--(A69)--(A78)--(A87) (A70)--(A79)--(A88) (A71)--(A80)--(A89) (A98)--(A72)--(A81)--(A90) (A73)--(A82)--(A91) (A74)--(A83)--(A92) (A99)--(A75)--(A84)--(A93)  (A94)--(A95)--(A96) (A97)--(A98)--(A99); 

\draw [line width=\w1 mm] (A109)--(A106)--(A103)--(A100)  (A107)--(A104)--(A101)  (A108)--(A105)--(A102) (A103)--(A104)--(A105) (A106)--(A107)--(A108) (A100)--(A101)--(A102);

\draw [line width={\w1+0.25 mm} ,red] (A30)--(A25)--(A22)--(A19)--(A20)--(A23)--(A26)--(A27)--(A24)--(A21)--(A12)--(A15)--(A18)--(A17)--(A14)--(A11)--(A10)--(A13)--(A16)--(A29)--(A28)--(A7)--(A4)--(A1)--(A2)--(A5)--(A8)--(A9)--(A6)--(A3)--(A31)--(A32)--(A33) 
(A63)--(A58)--(A55)--(A52)--(A53)--(A56)--(A59)--(A60)--(A57)--(A54)--(A45)--(A48)--(A51)--(A50)--(A47)--(A44)--(A43)--(A46)--(A49)--(A62)--(A61)--(A40)--(A37)--(A34)--(A35)--(A38)--(A41)--(A42)--(A39)--(A36)--(A64)--(A65)--(A66) 
(A96)--(A91)--(A88)--(A85)--(A86)--(A89)--(A92)--(A93)--(A90)--(A87)--(A78)--(A81)--(A84)--(A83)--(A80)--(A77)--(A76)--(A79)--(A82)--(A95)--(A94)--(A73)--(A70)--(A67)--(A68)--(A71)--(A74)--(A75)--(A72)--(A69)--(A97)--(A98)--(A99)
(A109)--(A106)--(A103)--(A100)--(A101)--(A104)--(A107)--(A108)--(A105)--(A102) (A109)--(A96)  (A99)--(A66) (A63)--(A30);

\end{tikzpicture}

\caption{Hamiltonian paths in $\Pi_4^{2}$ (above) and in $\Pi_4^{3}$ (below).} \label{fig:Hamiltonian path n=4}
\end{figure}
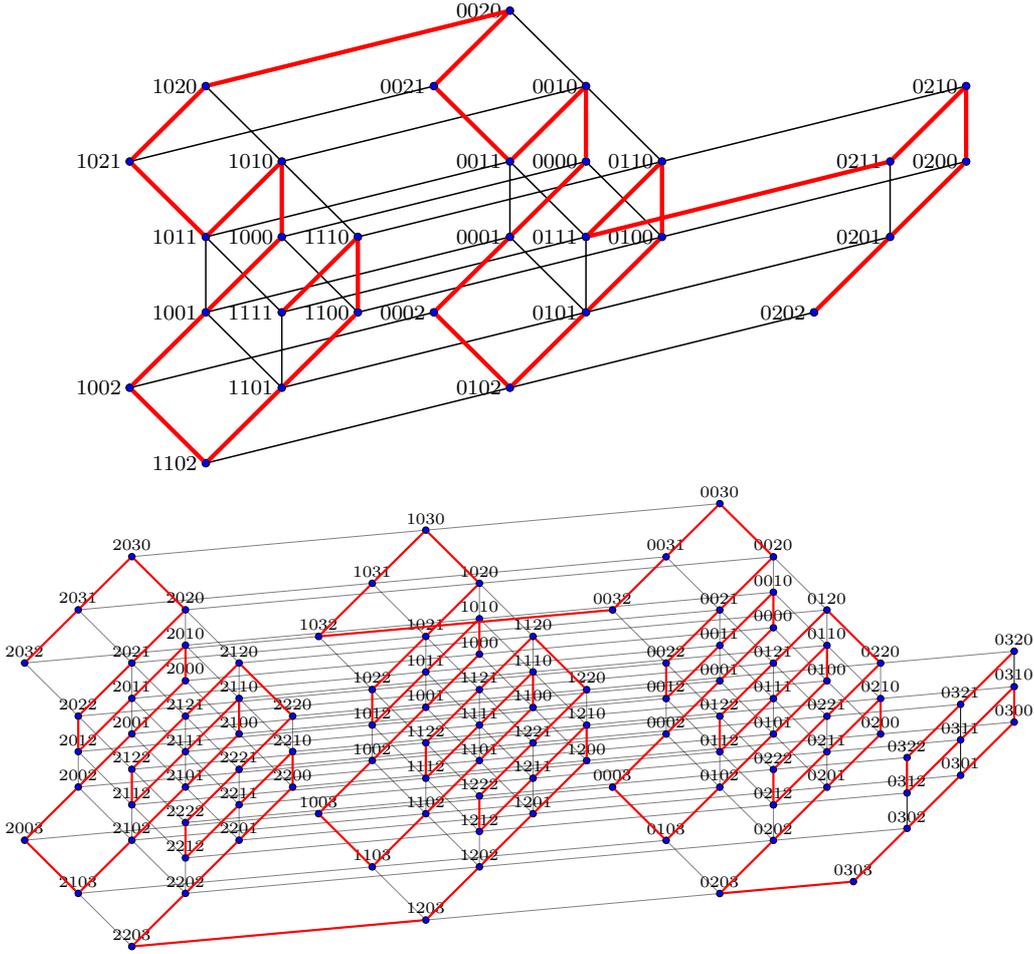

For $a$ even, the numbers $s^a_{2n+1}$ are even and $s^a_{2n}$ are odd. On
the other hand, for $a$ odd, the numbers $s^a_{3n+1}$ are even, but $s^a_{3n}$
and $s^a_{3n+2}$ are odd. A simple consequence of Theorem \ref{tm:Hamil}
is the following result on the existence of perfect and semi-perfect matchings
in metallic cubes.

\begin{corollary}
Graph $\Pi^a_n$ admits a perfect matching if the number of vertices is even,
otherwise it admits a semi-perfect matching.
\end{corollary}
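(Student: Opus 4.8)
The plan is to read this off directly from Theorem~\ref{tm:Hamil}. Recall that a \emph{semi-perfect matching} (equivalently, a near-perfect matching) is a matching that saturates all vertices but one. By Theorem~\ref{tm:Hamil}, for all $a,n\ge 1$ the graph $\Pi^a_n$ contains a Hamiltonian path, i.e.\ a path $v_1v_2\cdots v_N$ through all $N=s^a_n$ vertices, each visited exactly once. Write its edges as $e_i=v_iv_{i+1}$ for $1\le i\le N-1$, and note that $e_i$ and $e_j$ are vertex-disjoint whenever $|i-j|\ge 2$.

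I would then split on the parity of $N$. If $N$ is even, the odd-indexed edges $e_1,e_3,\dots,e_{N-1}$ form a collection of $N/2$ pairwise disjoint edges whose endpoints exhaust $\{v_1,\dots,v_N\}=V(\Pi^a_n)$, hence a perfect matching. If $N$ is odd, the edges $e_1,e_3,\dots,e_{N-2}$ form $(N-1)/2$ pairwise disjoint edges covering every vertex except $v_N$, which is exactly a semi-perfect matching. Finally I would dispose of the degenerate case $n=0$ separately (here $\Pi^a_0$ is a single vertex, matched by the empty, hence semi-perfect, matching); for $n\ge 1$ the argument above applies verbatim.

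There is essentially no obstacle in this step: all of the work has already been done in the proof of Theorem~\ref{tm:Hamil}, and what remains is the elementary observation that taking alternate edges along a Hamiltonian path yields a perfect matching when the number of vertices is even and a near-perfect one when it is odd. If desired, the parity information recorded just before the corollary ($s^a_{2k}$ odd and $s^a_{2k+1}$ even for even $a$; $s^a_{3k}$ and $s^a_{3k+2}$ odd and $s^a_{3k+1}$ even for odd $a$) makes explicit which pairs $(a,n)$ land in each case, but this refinement is not needed for the statement as given.
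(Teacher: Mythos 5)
Your argument is correct and is exactly the route the paper intends: the corollary is stated as an immediate consequence of Theorem~\ref{tm:Hamil}, obtained by taking alternate edges along a Hamiltonian path, which yields a perfect matching when $s^a_n$ is even and a semi-perfect (near-perfect) matching when it is odd. Your handling of the parity split and the trivial case $n=0$ matches the paper's (unwritten) reasoning, so there is nothing to add.
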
 

Now we examine the existence of Hamiltonian cycles in metallic cubes. 
\begin{theorem}
For even $a$ and $n$ odd, graphs $\Pi^a_n$ are Hamiltonian graphs.
\end{theorem}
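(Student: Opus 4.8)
The plan is to prove, by induction on odd $n\ge 3$, the stronger statement that for even $a$ the cube $\Pi^a_n$ has a Hamiltonian cycle through the edge $e_n=\{(a-1)^n,\,(a-1)^{n-1}(a-2)\}$, where $(a-1)^n$ is the constant string and $(a-1)^{n-1}(a-2)$ differs from it only in the last letter (so the two vertices are adjacent). This gives the theorem immediately, and, by deleting $e_n$, also a Hamiltonian \emph{path} of $\Pi^a_n$ from $(a-1)^n$ to $(a-1)^{n-1}(a-2)$; it is this path that the induction step will consume. (One has to take $n\ge 3$: $\Pi^a_1=P_a$ is itself a path, hence not Hamiltonian.) The base case $n=3$ I would settle by an explicit construction from the canonical decomposition $\Pi^a_3=(P_a\square\Pi^a_2)\oplus\Pi^a_1$, snaking through the $a$ copies of the grid $\Pi^a_2$ (evenness of $a$ enters here) and attaching the single copy of $\Pi^a_1=P_a$; this step needs extra care, since $P_a$ has a Hamiltonian path only between its two end-vertices, so $\Pi^a_1$ has to be attached as one side of a $P_2\square P_a$ ladder rather than as a single detour.

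For the inductive step ($n\ge5$) I would first record, from the construction in the proof of Theorem~\ref{tm:Hamil}, that the Hamiltonian path $H=w_1w_2\cdots w_M$ of $\Pi^a_{n-1}$ produced there for even $a$ ends at $w_M=(a-1)^{n-1}$ with penultimate vertex $w_{M-1}=(a-1)^{n-2}(a-2)$ --- this follows by unwinding that recursion, whose last block is always ``prepend $a-1$ to the lower Hamiltonian path'', down to $\Pi^a_1=P_a$ with path $0\to1\to\cdots\to(a-1)$. Then write the canonical decomposition (Theorem~\ref{candec}) as $\Pi^a_n=C_0\cup\cdots\cup C_{a-1}\cup C_{\ast}$: $C_j\cong\Pi^a_{n-1}$ is the set of strings starting with $j$; $C_{\ast}\cong\Pi^a_{n-2}$ is the set of strings starting with $0a$; consecutive $C_j$'s are joined by the matchings $j\alpha\sim(j+1)\alpha$, so $C_0,\dots,C_{a-1}$ form a path and $C_0\cup\cdots\cup C_{a-1}=P_a\square\Pi^a_{n-1}$; and $C_{\ast}$ is joined to $C_0$ by the matching $0a\beta\sim 0(a-1)\beta$ onto the sub-copy $D=\{0(a-1)\beta\}\subset C_0$.

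The heart of the argument is a ``column-descent'' Hamiltonian path $P$ of $P_a\square\Pi^a_{n-1}=C_0\cup\cdots\cup C_{a-1}$ with both ends in $D$. Starting at $0w_M$, run down the column $w_M$ through all layers, $0w_M\to1w_M\to\cdots\to(a-1)w_M$; inside $C_{a-1}$ traverse $H$ backwards, $(a-1)w_M\to(a-1)w_{M-1}\to\cdots\to(a-1)w_1$, covering $C_{a-1}$ and using the edge $\{(a-1)w_M,(a-1)w_{M-1}\}=e_n$; then zig-zag back up through $C_{a-2},C_{a-3},\dots,C_1$, covering each $C_j$ by the sub-path $w_1\cdots w_{M-1}$ of $H$ (its vertex $w_M$ was already taken during the descent); since $a$ is even, this zig-zag leaves $C_1$ at $1w_1$, from which one drops to $C_0$ and finishes $C_0$ along $w_1\cdots w_{M-1}$. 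Thus $P$ runs from $0w_M=0(a-1)^{n-1}$ to $0w_{M-1}=0(a-1)^{n-2}(a-2)$, both in $D$, and contains $e_n$ --- and this is exactly where evenness of $a$ is needed, since for odd $a$ the zig-zag would exit $C_1$ at $1w_{M-1}$ and the second end of $P$ would fall outside $D$. Finally, by the induction hypothesis for $\Pi^a_{n-2}$, deleting $e_{n-2}$ from its Hamiltonian cycle gives a Hamiltonian path $P^{\ast}$ of $C_{\ast}$ from $0a(a-1)^{n-2}$ to $0a(a-1)^{n-3}(a-2)$, whose two ends are adjacent in $\Pi^a_n$ (the second letter changing between $a$ and $a-1$) to the two ends of $P$; concatenating $P$, an edge into $C_{\ast}$, $P^{\ast}$, and an edge back closes a Hamiltonian cycle of $\Pi^a_n$ that still uses $e_n$.

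I expect the genuine work to be concentrated in two places: the combinatorial bookkeeping of the column-descent step --- verifying that each of the $a$ layers $C_0,\dots,C_{a-1}$ is covered exactly once and that the parity of $a$ makes the walk terminate in $C_0$ with both ends in $D$ --- and the explicit base case $n=3$ for every even $a$. Everything else is routine; in particular, the obvious necessary condition that $s^a_n$ be even for even $a$ and odd $n$ is automatically respected by the construction.
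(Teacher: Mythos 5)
Your proposal is correct, but it follows a genuinely different route from the paper's. The paper runs a two-track induction --- for $n$ even it builds a cycle through all vertices but one, for $n$ odd a Hamiltonian cycle --- and merges the cycles sitting in the $a$ copies of $\Pi^a_{n-1}$ and in $\Pi^a_{n-2}$ by repeatedly deleting a pair of ``corresponding'' edges and splicing in the two matching edges between neighbouring copies. You instead strengthen the statement for odd $n$ only (a Hamiltonian cycle through the marked edge $e_n$), consume the already-proved Hamiltonian path of $\Pi^a_{n-1}$ from Theorem \ref{tm:Hamil} directly via the column-descent path of $P_a\square\Pi^a_{n-1}$, and insert the copy of $\Pi^a_{n-2}$ as a single detour using the inductive hypothesis with $e_{n-2}$ removed. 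I have checked the bookkeeping: the descent, the backward traversal of $C_{a-1}$ (which supplies $e_n$), and the zig-zag cover each layer exactly once and, because $a$ is even, terminate at $0w_{M-1}\in D$; the identification $w_{M-1}=(a-1)^{n-2}(a-2)$ does follow by unwinding the recursion of Theorem \ref{tm:Hamil} down to $\Pi^a_1$; and the two ends of $P$ are matched to the two ends of $P^{\ast}$ by changing the second letter between $a-1$ and $a$. What your approach buys is a single, fully explicit induction with no auxiliary near-Hamiltonicity statement for even $n$, and the stronger conclusion that the cycle passes through a prescribed edge. What it costs is the base case $n=3$, where $C_{\ast}\cong P_a$ has a Hamiltonian path only between its two end-vertices and the generic step genuinely fails for $a\ge 4$; your ladder repair (closing the column-descent path of $P_a\square\Pi^a_2$ into a cycle containing the row $D$ consecutively, then replacing that row by a Hamiltonian path of the ladder $C_{\ast}\cup D\cong P_2\square P_a$ between its two $D$-corners, which exists precisely because $a$ is even) does work, but it is only sketched and should be written out in full.
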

\begin{proof}
Let $a$ be even. Then $\Pi^a_2$ is an $a\times a$ grid with additional vertex
$0a$. It is easy to see that grids $a\times a$ contain a Hamiltonian cycle,
which implies that graph $\Pi^a_2$ contains a cycle that visits all vertices 
but one, namely, the vertex $0a$. Now we wish to construct a Hamiltonian cycle
in graphs $\Pi^a_3$. Graph $\Pi^a_3$ can be decomposed into $a$ copies of
graph $\Pi^a_2$ and one copy of graph $\Pi^a_1$. By Theorem \ref{tm:Hamil},
there exists a Hamiltonian path in each subgraph $\Pi^a_2$ and $\Pi^a_{1}$.
Let $0H_{2},1H_2,\dots,(a-1)H_2$ and $0aH_1$ denote those paths. Connecting
starting points in $0H_2$ and $1H_2$, as well as the ending points in $0H_2$
and $1H_2$ yields a cycle in $0\Pi^a_2\oplus 1\Pi^a_2$. Since the number of
copies of graph $\Pi^a_2$ is even, we can pair up those copies to obtain
$\frac{a}{2}$ cycles in the same manner. Now choose vertices
$1v,1w\in 1\Pi^a_2$ so that the edge $(1v)(1w)$ is in Hamiltonian cycle in 
$1\Pi^a_2$, and the corresponding vertices $2v$ and $2w$ in $\Pi^a_2$. By
removing edges $(1v)(1w)$ and $(2v)(2w)$ from the cycles in $1\Pi^a_2$ and
$2\Pi^2$ and by adding edges $(1v)(2v)$ and $(1w)(2w)$, we obtain a Hamiltonian
cycle for subgraph $0\Pi^a_2\oplus 1\Pi^a_2\oplus 2\Pi^a_2\oplus 3\Pi^a_2$.
This method is applicable between every two $\frac{a}{2}$ cycles. Thus, we
obtained Hamiltonian cycle in $P_a\square \Pi^a_2$. Since $\Pi^a_1$ is path
graph with even number of vertices and, by Theorem \ref{tm:Hamil} and
construction so far, the path $0(a-1)H^a_1\subset 0H^a_2$ is a part of a
Hamiltonian cycle in $P_a\square \Pi^a_2$. Hence, we can extend a Hamiltonian
cycle in $P_a\square \Pi^a_2$ to $\Pi^a_3$ by adding a zig-zag shape path
to include all vertices of $\Pi^a_1$. Figure \ref{fig:Hamiltonian cycle n=3}
shows a Hamiltonian cycle for $a=2$ and $a=4$, but it is clear that
construction can be done whenever $a$ is even.  

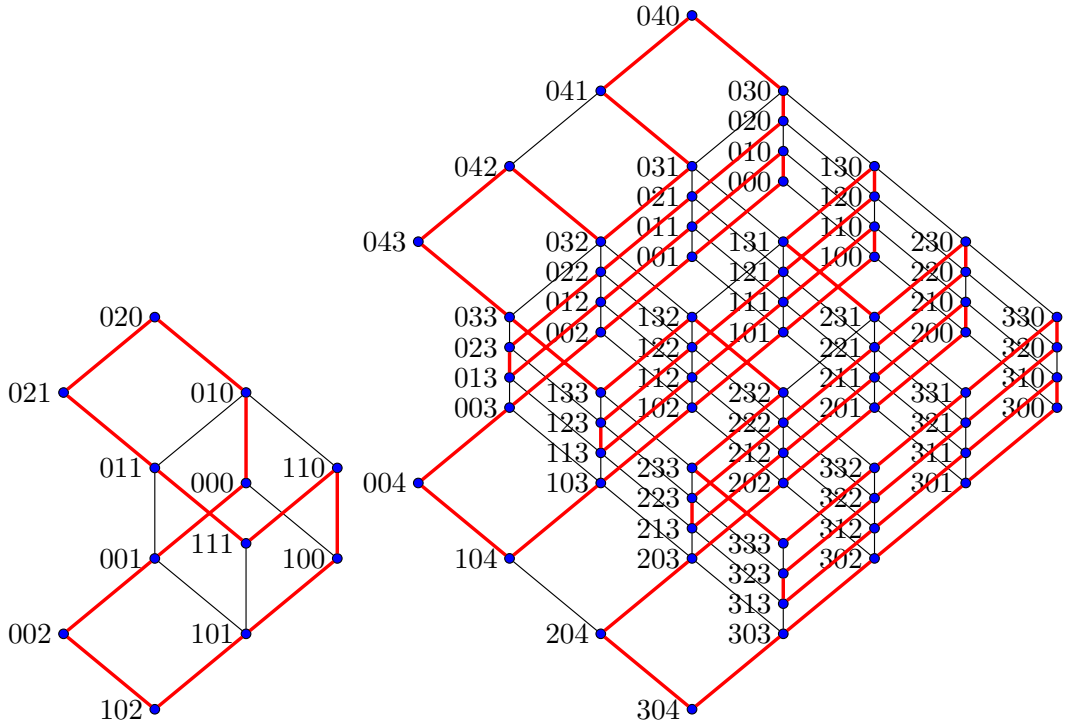
\begin{figure} \centering \begin{tikzpicture}[scale=0.20]
\tikzmath{\x1 = 0.35; \y1 =-0.05; \z1=180; \w1=0.15; \xs=6; \ys=-5; \yss=-5;
\x2 = \x1 + 1; \y2 =\y1 +3; } 
\small
\node [label={[label distance=\y1 cm]\z1: $000$},circle,fill=blue,draw=black,scale=\x1](1) at (0,30) {};
\node [label={[label distance=\y1 cm]\z1: $010$},circle,fill=blue,draw=black,scale=\x1](2) at (0,36) {};

\node [label={[label distance=\y1 cm]\z1: $001$},circle,fill=blue,draw=black,scale=\x1](3) at (-6,25) {};
\node [label={[label distance=\y1 cm]\z1: $011$},circle,fill=blue,draw=black,scale=\x1](4) at (-6,31) {};
\node [label={[label distance=\y1 cm]\z1: $002$},circle,fill=blue,draw=black,scale=\x1](5) at (-12,20) {};

\node [label={[label distance=\y1 cm]\z1: $100$},circle,fill=blue,draw=black,scale=\x1](6) at (0+\xs,30+\ys) {};
\node [label={[label distance=\y1 cm]\z1: $110$},circle,fill=blue,draw=black,scale=\x1](7) at (0+\xs,36+\ys) {};

\node [label={[label distance=\y1 cm]\z1: $101$},circle,fill=blue,draw=black,scale=\x1](8) at (-6+\xs,25+\ys) {};
\node [label={[label distance=\y1 cm]\z1: $111$},circle,fill=blue,draw=black,scale=\x1](9) at (-6+\xs,31+\ys) {};
\node [label={[label distance=\y1 cm]\z1: $102$},circle,fill=blue,draw=black,scale=\x1](10) at (-12+\xs,20+\ys) {};

\node [label={[label distance=\y1 cm]\z1: $020$},circle,fill=blue,draw=black,scale=\x1](11) at (0-\xs,36-\ys) {};
\node [label={[label distance=\y1 cm]\z1: $021$},circle,fill=blue,draw=black,scale=\x1](12) at (-6-\xs,31-\ys) {};

\draw [line width=\w1 mm] (1)--(2) (3)--(4) (7)--(6) (8)--(9)  (1)--(3)  (2)--(4) (6)--(8) (7)--(9) (1)--(6)  (2)--(7) (3)--(8)  (4)--(9)  (3)--(5)--(10)--(8) (2)--(11)--(12)--(4);

\draw [line width={\w1+0.4 mm},red] (9)--(7)--(6)--(8)--(10)--(5)--(3)--(1)--(2)--(11)--(12)--(4)--(9); 

\end{tikzpicture}  \begin{tikzpicture}[scale=0.20]
\tikzmath{\x1 = 0.35; \y1 =-0.05; \z1=180; \w1=0.15; \xs=6; \ys=-5; \yss=-5;
\x2 = \x1 + 1; \y2 =\y1 +3; } 
\small
\node [label={[label distance=\y1 cm]\z1: $000$},circle,fill=blue,draw=black,scale=\x1](1) at (0,30) {};
\node [label={[label distance=\y1 cm]\z1: $010$},circle,fill=blue,draw=black,scale=\x1](2) at (0,32) {};
\node [label={[label distance=\y1 cm]\z1: $020$},circle,fill=blue,draw=black,scale=\x1](3) at (0,34) {};
\node [label={[label distance=\y1 cm]\z1: $030$},circle,fill=blue,draw=black,scale=\x1](4) at (0,36) {};
\node [label={[label distance=\y1 cm]\z1: $001$},circle,fill=blue,draw=black,scale=\x1](5) at (-6,25) {};
\node [label={[label distance=\y1 cm]\z1: $011$},circle,fill=blue,draw=black,scale=\x1](6) at (-6,27) {};
\node [label={[label distance=\y1 cm]\z1: $021$},circle,fill=blue,draw=black,scale=\x1](7) at (-6,29) {};
\node [label={[label distance=\y1 cm]\z1: $031$},circle,fill=blue,draw=black,scale=\x1](8) at (-6,31) {};
\node [label={[label distance=\y1 cm]\z1: $002$},circle,fill=blue,draw=black,scale=\x1](9) at (-12,20) {};
\node [label={[label distance=\y1 cm]\z1: $012$},circle,fill=blue,draw=black,scale=\x1](10) at (-12,22) {};
\node [label={[label distance=\y1 cm]\z1: $022$},circle,fill=blue,draw=black,scale=\x1](11) at (-12,24) {};
\node [label={[label distance=\y1 cm]\z1: $032$},circle,fill=blue,draw=black,scale=\x1](12) at (-12,26) {};
\node [label={[label distance=\y1 cm]\z1: $003$},circle,fill=blue,draw=black,scale=\x1](13) at (-18,15) {};
\node [label={[label distance=\y1 cm]\z1: $013$},circle,fill=blue,draw=black,scale=\x1](14) at (-18,17) {};
\node [label={[label distance=\y1 cm]\z1: $023$},circle,fill=blue,draw=black,scale=\x1](15) at (-18,19) {};
\node [label={[label distance=\y1 cm]\z1: $033$},circle,fill=blue,draw=black,scale=\x1](16) at (-18,21) {};
\node [label={[label distance=\y1 cm]\z1: $004$},circle,fill=blue,draw=black,scale=\x1](17) at (-24,10) {};

\node [label={[label distance=\y1 cm]\z1: $100$},circle,fill=blue,draw=black,scale=\x1](18) at (0+\xs,30+\ys) {};
\node [label={[label distance=\y1 cm]\z1: $110$},circle,fill=blue,draw=black,scale=\x1](19) at (0+\xs,32+\ys) {};
\node [label={[label distance=\y1 cm]\z1: $120$},circle,fill=blue,draw=black,scale=\x1](20) at (0+\xs,34+\ys) {};
\node [label={[label distance=\y1 cm]\z1: $130$},circle,fill=blue,draw=black,scale=\x1](21) at (0+\xs,36+\ys) {};
\node [label={[label distance=\y1 cm]\z1: $101$},circle,fill=blue,draw=black,scale=\x1](22) at (-6+\xs,25+\ys) {};
\node [label={[label distance=\y1 cm]\z1: $111$},circle,fill=blue,draw=black,scale=\x1](23) at (-6+\xs,27+\ys) {};
\node [label={[label distance=\y1 cm]\z1: $121$},circle,fill=blue,draw=black,scale=\x1](24) at (-6+\xs,29+\ys) {};
\node [label={[label distance=\y1 cm]\z1: $131$},circle,fill=blue,draw=black,scale=\x1](25) at (-6+\xs,31+\ys) {};
\node [label={[label distance=\y1 cm]\z1: $102$},circle,fill=blue,draw=black,scale=\x1](26) at (-12+\xs,20+\ys) {};
\node [label={[label distance=\y1 cm]\z1: $112$},circle,fill=blue,draw=black,scale=\x1](27) at (-12+\xs,22+\ys) {};
\node [label={[label distance=\y1 cm]\z1: $122$},circle,fill=blue,draw=black,scale=\x1](28) at (-12+\xs,24+\ys) {};
\node [label={[label distance=\y1 cm]\z1: $132$},circle,fill=blue,draw=black,scale=\x1](29) at (-12+\xs,26+\ys) {};
\node [label={[label distance=\y1 cm]\z1: $103$},circle,fill=blue,draw=black,scale=\x1](30) at (-18+\xs,15+\ys) {};
\node [label={[label distance=\y1 cm]\z1: $113$},circle,fill=blue,draw=black,scale=\x1](31) at (-18+\xs,17+\ys) {};
\node [label={[label distance=\y1 cm]\z1: $123$},circle,fill=blue,draw=black,scale=\x1](32) at (-18+\xs,19+\ys) {};
\node [label={[label distance=\y1 cm]\z1: $133$},circle,fill=blue,draw=black,scale=\x1](33) at (-18+\xs,21+\ys) {};
\node [label={[label distance=\y1 cm]\z1: $104$},circle,fill=blue,draw=black,scale=\x1](34) at (-24+\xs,10+\ys) {};

\node [label={[label distance=\y1 cm]\z1: $200$},circle,fill=blue,draw=black,scale=\x1](35) at (0+2*\xs,30+2*\ys) {};
\node [label={[label distance=\y1 cm]\z1: $210$},circle,fill=blue,draw=black,scale=\x1](36) at (0+2*\xs,32+2*\ys) {};
\node [label={[label distance=\y1 cm]\z1: $220$},circle,fill=blue,draw=black,scale=\x1](37) at (0+2*\xs,34+2*\ys) {};
\node [label={[label distance=\y1 cm]\z1: $230$},circle,fill=blue,draw=black,scale=\x1](38) at (0+2*\xs,36+2*\ys) {};
\node [label={[label distance=\y1 cm]\z1: $201$},circle,fill=blue,draw=black,scale=\x1](39) at (-6+2*\xs,25+2*\ys) {};
\node [label={[label distance=\y1 cm]\z1: $211$},circle,fill=blue,draw=black,scale=\x1](40) at (-6+2*\xs,27+2*\ys) {};
\node [label={[label distance=\y1 cm]\z1: $221$},circle,fill=blue,draw=black,scale=\x1](41) at (-6+2*\xs,29+2*\ys) {};
\node [label={[label distance=\y1 cm]\z1: $231$},circle,fill=blue,draw=black,scale=\x1](42) at (-6+2*\xs,31+2*\ys) {};
\node [label={[label distance=\y1 cm]\z1: $202$},circle,fill=blue,draw=black,scale=\x1](43) at (-12+2*\xs,20+2*\ys) {};
\node [label={[label distance=\y1 cm]\z1: $212$},circle,fill=blue,draw=black,scale=\x1](44) at (-12+2*\xs,22+2*\ys) {};
\node [label={[label distance=\y1 cm]\z1: $222$},circle,fill=blue,draw=black,scale=\x1](45) at (-12+2*\xs,24+2*\ys) {};
\node [label={[label distance=\y1 cm]\z1: $232$},circle,fill=blue,draw=black,scale=\x1](46) at (-12+2*\xs,26+2*\ys) {};
\node [label={[label distance=\y1 cm]\z1: $203$},circle,fill=blue,draw=black,scale=\x1](47) at (-18+2*\xs,15+2*\ys) {};
\node [label={[label distance=\y1 cm]\z1: $213$},circle,fill=blue,draw=black,scale=\x1](48) at (-18+2*\xs,17+2*\ys) {};
\node [label={[label distance=\y1 cm]\z1: $223$},circle,fill=blue,draw=black,scale=\x1](49) at (-18+2*\xs,19+2*\ys) {};
\node [label={[label distance=\y1 cm]\z1: $233$},circle,fill=blue,draw=black,scale=\x1](50) at (-18+2*\xs,21+2*\ys) {};
\node [label={[label distance=\y1 cm]\z1: $204$},circle,fill=blue,draw=black,scale=\x1](51) at (-24+2*\xs,10+2*\ys) {};

\node [label={[label distance=\y1 cm]\z1: $300$},circle,fill=blue,draw=black,scale=\x1](52) at (0+3*\xs,30+3*\ys) {};
\node [label={[label distance=\y1 cm]\z1: $310$},circle,fill=blue,draw=black,scale=\x1](53) at (0+3*\xs,32+3*\ys) {};
\node [label={[label distance=\y1 cm]\z1: $320$},circle,fill=blue,draw=black,scale=\x1](54) at (0+3*\xs,34+3*\ys) {};
\node [label={[label distance=\y1 cm]\z1: $330$},circle,fill=blue,draw=black,scale=\x1](55) at (0+3*\xs,36+3*\ys) {};
\node [label={[label distance=\y1 cm]\z1: $301$},circle,fill=blue,draw=black,scale=\x1](56) at (-6+3*\xs,25+3*\ys) {};
\node [label={[label distance=\y1 cm]\z1: $311$},circle,fill=blue,draw=black,scale=\x1](57) at (-6+3*\xs,27+3*\ys) {};
\node [label={[label distance=\y1 cm]\z1: $321$},circle,fill=blue,draw=black,scale=\x1](58) at (-6+3*\xs,29+3*\ys) {};
\node [label={[label distance=\y1 cm]\z1: $331$},circle,fill=blue,draw=black,scale=\x1](59) at (-6+3*\xs,31+3*\ys) {};
\node [label={[label distance=\y1 cm]\z1: $302$},circle,fill=blue,draw=black,scale=\x1](60) at (-12+3*\xs,20+3*\ys) {};
\node [label={[label distance=\y1 cm]\z1: $312$},circle,fill=blue,draw=black,scale=\x1](61) at (-12+3*\xs,22+3*\ys) {};
\node [label={[label distance=\y1 cm]\z1: $322$},circle,fill=blue,draw=black,scale=\x1](62) at (-12+3*\xs,24+3*\ys) {};
\node [label={[label distance=\y1 cm]\z1: $332$},circle,fill=blue,draw=black,scale=\x1](63) at (-12+3*\xs,26+3*\ys) {};
\node [label={[label distance=\y1 cm]\z1: $303$},circle,fill=blue,draw=black,scale=\x1](64) at (-18+3*\xs,15+3*\ys) {};
\node [label={[label distance=\y1 cm]\z1: $313$},circle,fill=blue,draw=black,scale=\x1](65) at (-18+3*\xs,17+3*\ys) {};
\node [label={[label distance=\y1 cm]\z1: $323$},circle,fill=blue,draw=black,scale=\x1](66) at (-18+3*\xs,19+3*\ys) {};
\node [label={[label distance=\y1 cm]\z1: $333$},circle,fill=blue,draw=black,scale=\x1](67) at (-18+3*\xs,21+3*\ys) {};
\node [label={[label distance=\y1 cm]\z1: $304$},circle,fill=blue,draw=black,scale=\x1](68) at (-24+3*\xs,10+3*\ys) {};

\node [label={[label distance=\y1 cm]\z1: $040$},circle,fill=blue,draw=black,scale=\x1](69) at (0-\xs,36-\ys) {};
\node [label={[label distance=\y1 cm]\z1: $041$},circle,fill=blue,draw=black,scale=\x1](70) at (-6-\xs,31-\ys) {};
\node [label={[label distance=\y1 cm]\z1: $042$},circle,fill=blue,draw=black,scale=\x1](71) at (-12-\xs,26-\ys) {};
\node [label={[label distance=\y1 cm]\z1: $043$},circle,fill=blue,draw=black,scale=\x1](72) at (-18-\xs,21-\ys) {};

\draw [line width=\w1 mm] (1)--(2)--(3)--(4)--(69) (5)--(6)--(7)--(8)--(70) (9)--(10)--(11)--(12)--(71) (13)--(14)--(15)--(16)--(72)  (18)--(19)--(20)--(21) (22)--(23)--(24)--(25) (26)--(27)--(28)--(29) (30)--(31)--(32)--(33) (35)--(36)--(37)--(38) (39)--(40)--(41)--(42) (43)--(44)--(45)--(46) (47)--(48)--(49)--(50) (52)--(53)--(54)--(55) (56)--(57)--(58)--(59) (60)--(61)--(62)--(63)  (64)--(65)--(66)--(67)  (1)--(5)--(9)--(13)--(17)  (18)--(22)--(26)--(30)--(34) (35)--(39)--(43)--(47)--(51) (52)--(56)--(60)--(64)--(68)  (2)--(6)--(10)--(14)  (19)--(23)--(27)--(31) (36)--(40)--(44)--(48) (53)--(57)--(61)--(65) (3)--(7)--(11)--(15)  (20)--(24)--(28)--(32) (37)--(41)--(45)--(49) (54)--(58)--(62)--(66) (4)--(8)--(12)--(16)  (21)--(25)--(29)--(33) (38)--(42)--(46)--(50) (55)--(59)--(63)--(67) (1)--(18)--(35)--(52) (2)--(19)--(36)--(53) (3)--(20)--(37)--(54) (4)--(21)--(38)--(55) (5)--(22)--(39)--(56) (6)--(23)--(40)--(57) (7)--(24)--(41)--(58) (8)--(25)--(42)--(59) (9)--(26)--(43)--(60) (10)--(27)--(44)--(61) (11)--(28)--(45)--(62) (12)--(29)--(46)--(63) (13)--(30)--(47)--(64) (14)--(31)--(48)--(65) (15)--(32)--(49)--(66) (16)--(33)--(50)--(67)  (17)--(34)--(51)--(68) (69)--(70)--(71)--(72)   ;

\draw [line width={\w1+0.4 mm},red] (17)--(13)--(9)--(5)--(1)--(2)--(6)--(10)--(14)--(15)--(11)--(7)--(3)--(4) (34)--(30)--(26)--(22)--(18)--(19)--(23)--(27)--(31)--(32)--(28)--(24)--(20)--(21)--(25) (29)--(33) (51)--(47)--(43)--(39)--(35)--(36)--(40)--(44)--(48)--(49)--(45)--(41)--(37)--(38)--(42)  (46)--(50)  (68)--(64)--(60)--(56)--(52)--(53)--(57)--(61)--(65)--(66)--(62)--(58)--(54)--(55)--(59)--(63)--(67)  (17)--(34) (16)--(33) (51)--(68) (50)--(67)  (25)--(42) (29)--(46) (4)--(69)--(70)--(8)--(12)--(71)--(72)--(16); 

\end{tikzpicture}

\caption{Hamiltonian cycle in $\Pi_3^{2}$ and in $\Pi_3^{4}$.} \label{fig:Hamiltonian cycle n=3}
\end{figure}

Now we wish to prove that every metallic cube contains a Hamiltonian cycle
when $a$ is even and $n$ is odd, and contains a cycle that visits all
vertices but one when both $a$ and $n$ are even. The proofs for $n$ even
and odd are similar, so we present just a proof for odd $n$. Since
$\Pi^a_n=P_a\square\Pi^a_{n-1}\oplus\Pi^a_{n-2}$, by assumption of induction,
there is cycle in each copy $\Pi^a_{n-1}$ that visits all vertices but one.
Let $k\Pi^a_{n-1}$ denote the copy of graph $\Pi^a_{n-1}$ induced by the
vertices that start with $k$, where $0\leq k\leq a-1$.
Let $\alpha\in\mathcal{S}^a_{n-1}$ and let $k\alpha$ denote those vertices
omitted by Hamiltonian cycle in each subgraph $k\Pi^a_{n-1}$,
$0\leq k\leq a-1$. Furthermore, by assumption, there is Hamiltonian cycle
in $\Pi^a_{n-2}$. Now we choose $\beta,\gamma\in\mathcal{S}^a_{n-1}$ so
that vertices $k\alpha$  and $k\beta$ are adjacent in $k\Pi^a_{n-1}$ and
that the edge $(k\beta)(k\gamma)$ is part of Hamiltonian cycle in
$k\Pi^a_{n-1}$. For $k=0,2,4,\dots,a-2$, we remove the edge
$(k\beta)(k\gamma)$ and add edges $(k\beta)(k\alpha)$,
$(k\alpha)((k+1)\alpha)$, $((k+1)\alpha)(k+1)\beta$ and $(k\gamma)(k+1)\gamma$.
Thus we obtained Hamiltonian cycles in $k\Pi^a_{n-1}\oplus(k+1)\Pi^a_{n-1}$.
To merge those $\frac{a}{2}$ cycles and one cycle in $\Pi^a_{n-2}$ together,
we can choose any two corresponding edges that are part of Hamiltonian cycles
in the neighboring subgraphs and apply similar method to merge all cycles in
Hamiltonian cycle of graph $\Pi^a_{n}$. Choosing corresponding edges is
possible, for construction of Hamiltonian cycle in this manner is inductive,
hence, if some edge is part of Hamiltonian cycle in one copy of $\Pi^a_{n-1}$,
then it is part of Hamiltonian cycle in all copies of $\Pi^a_{n-1}$.
Similar observation holds for supgraphs $0\Pi^a_{n-1}$ and $0a\Pi^{n-2}$. 
Figure \ref{fig:Construction of Hamiltonian cycle} schematically shows the
described method of construction Hamiltonian cycles in $\Pi^a_n$.

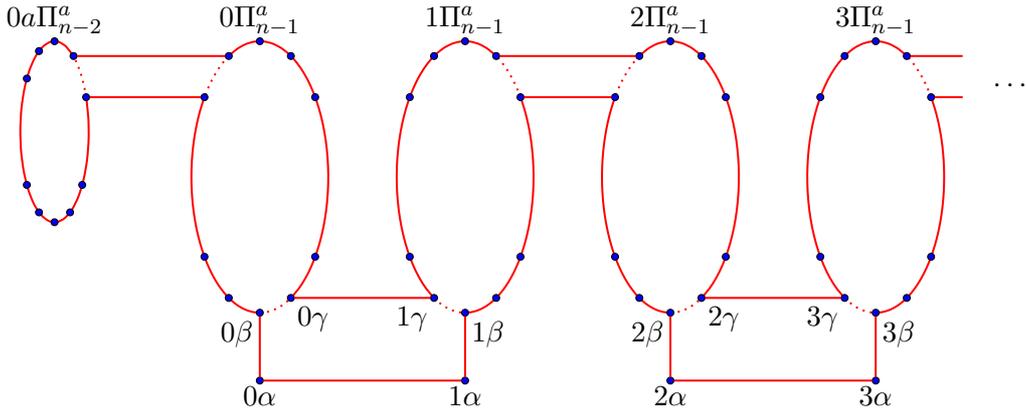
\begin{figure} \centering \begin{tikzpicture}[scale=0.9]
\tikzmath{\a=1; \b=2; \x1 = 0.25; \y1 =-0.1; \z1=180; \w1=0.75; \xs=3; \ys=0; \yss=-1;
\x2 = \x1 + 1; \y2 =\y1 +3; } 
\small

 \draw[domain=6.25*180/20:42.5*180/20, smooth, variable=\x, red,line width={\w1}] plot ({\a*cos(\x)/2-\xs}, {2*\b*sin(\x)/3+1*\b/3});
  \draw[domain=2.5*180/20:6.25*180/20, dotted, smooth, variable=\x, red,line width={\w1}] plot ({\a*cos(\x)/2-\xs}, {2*\b*sin(\x)/3+1*\b/3});

\node [label={[label distance=\y1 cm]\z1: },circle,fill=blue,draw=black,scale=\x1](01) at ({\a*cos(2.5*180/20)/2-\xs},{\b*sin(16*180/20)}) {};
\node [label={[label distance=\y1 cm]\z1: },circle,fill=blue,draw=black,scale=\x1](02) at ({\a*cos(6.25*180/20)/2-\xs},{\b*sin(13*180/20)}) {};
\node [label={[label distance=\y1 cm]\z1: },circle,fill=blue,draw=black,scale=\x1](03) at ({\a*cos(10*180/20)/2-\xs},{\b*sin(10*180/20)*2/3+1*\b/3}) {};
\node [label={[label distance=\y1 cm]\z1: },circle,fill=blue,draw=black,scale=\x1](04) at ({\a*cos(13*180/20)/2-\xs},{\b*sin(13*180/20)*2/3+1*\b/3)}) {};
\node [label={[label distance=\y1 cm]\z1: },circle,fill=blue,draw=black,scale=\x1](05) at ({\a*cos(16*180/20)/2-\xs},{\b*sin(16*180/20)*2/3+1*\b/3}) {};
\node [label={[label distance=\y1 cm]\z1: },circle,fill=blue,draw=black,scale=\x1](06) at ({\a*cos(24*180/20)/2-\xs},{\b*sin(24*180/20)*2/3+1*\b/3}) {};
\node [label={[label distance=\y1 cm]\z1: },circle,fill=blue,draw=black,scale=\x1](07) at ({\a*cos(27*180/20)/2-\xs},{\b*sin(27*180/20)*2/3+1*\b/3}) {};
\node [label={[label distance=\y1 cm]\z1: },circle,fill=blue,draw=black,scale=\x1](08) at ({\a*cos(30*180/20)/2-\xs},{\b*sin(30*180/20)*2/3+1*\b/3}) {};
\node [label={[label distance=\y1 cm]\z1: },circle,fill=blue,draw=black,scale=\x1](09) at ({\a*cos(33*180/20)/2-\xs},{\b*sin(33*180/20)*2/3+1*\b/3}) {};
\node [label={[label distance=\y1 cm]\z1: },circle,fill=blue,draw=black,scale=\x1](10) at ({\a*cos(36*180/20)/2-\xs},{\b*sin(36*180/20)*2/3+1*\b/3}) {};

  \draw[domain=16*180/20:30*180/20, smooth, variable=\x, red,line width={\w1}] plot ({\a*cos(\x)}, {\b*sin(\x)});
    \draw[domain=33*180/20:53*180/20, smooth, variable=\x, red,line width={\w1}] plot ({\a*cos(\x)}, {\b*sin(\x)});
    
    \draw[domain=30*180/20:33*180/20,dotted, smooth, variable=\x, red,line width={\w1}] plot ({\a*cos(\x)}, {\b*sin(\x)});
    \draw[domain=13*180/20:16*180/20,dotted,smooth, variable=\x, red,line width={\w1}] plot ({\a*cos(\x)}, {\b*sin(\x)});

\node [label={[label distance=\y1 cm]\z1: },circle,fill=blue,draw=black,scale=\x1](11) at ({\a*cos(4*180/20)},{\b*sin(4*180/20)}) {};
\node [label={[label distance=\y1 cm]\z1: },circle,fill=blue,draw=black,scale=\x1](12) at ({\a*cos(7*180/20)},{\b*sin(7*180/20)}) {};
\node [label={[label distance=\y1 cm]\z1: },circle,fill=blue,draw=black,scale=\x1](13) at ({\a*cos(10*180/20)},{\b*sin(10*180/20)}) {};
\node [label={[label distance=\y1 cm]\z1: },circle,fill=blue,draw=black,scale=\x1](14) at ({\a*cos(13*180/20)},{\b*sin(13*180/20)}) {};
\node [label={[label distance=\y1 cm]\z1: },circle,fill=blue,draw=black,scale=\x1](15) at ({\a*cos(16*180/20)},{\b*sin(16*180/20)}) {};
\node [label={[label distance=\y1 cm]\z1: },circle,fill=blue,draw=black,scale=\x1](16) at ({\a*cos(24*180/20)},{\b*sin(24*180/20)}) {};
\node [label={[label distance=\y1 cm]\z1: },circle,fill=blue,draw=black,scale=\x1](17) at ({\a*cos(27*180/20)},{\b*sin(27*180/20)}) {};
\node [label={[label distance=\y1 cm]200: $0\beta$ },circle,fill=blue,draw=black,scale=\x1](18) at ({\a*cos(30*180/20)},{\b*sin(30*180/20)}) {};
\node [label={[label distance=\y1 cm]340: $0\gamma$ },circle,fill=blue,draw=black,scale=\x1](19) at ({\a*cos(33*180/20)},{\b*sin(33*180/20)}) {};
\node [label={[label distance=\y1 cm]\z1: },circle,fill=blue,draw=black,scale=\x1](20) at ({\a*cos(36*180/20)},{\b*sin(36*180/20)}) {};
\node [label={[label distance=\y1 cm]270: $0\alpha$ },circle,fill=blue,draw=black,scale=\x1](21) at ({\a*cos(30*180/20)},{\b*sin(30*180/20)+\yss}) {};

 \draw[domain=7*180/20:27*180/20, smooth, variable=\x, red,line width={\w1}] plot ({\a*cos(\x)+\xs}, {\b*sin(\x)});
    \draw[domain=30*180/20:44*180/20, smooth, variable=\x, red,line width={\w1}] plot ({\a*cos(\x)+\xs}, {\b*sin(\x)});

    \draw[domain=27*180/20:30*180/20,dotted, smooth, variable=\x, red,line width={\w1}] plot ({\a*cos(\x)+\xs}, {\b*sin(\x)});
    \draw[domain=4*180/20:7*180/20,dotted, smooth, variable=\x, red,line width={\w1}] plot ({\a*cos(\x)+\xs}, {\b*sin(\x)});
    
\node [label={[label distance=\y1 cm]\z1: },circle,fill=blue,draw=black,scale=\x1](22) at ({\a*cos(4*180/20)+\xs},{\b*sin(4*180/20)+\ys}) {};

\node [label={[label distance=\y1 cm]\z1: },circle,fill=blue,draw=black,scale=\x1](23) at ({\a*cos(7*180/20)+\xs},{\b*sin(7*180/20)+\ys}) {};
\node [label={[label distance=\y1 cm]\z1: },scale=\x1](22a) at ({\a*cos(4*180/20)+\xs+0.5},{\b*sin(4*180/20)+\ys}) {};

\node [label={[label distance=\y1 cm]\z1: },scale=\x1](23a) at ({\a*cos(4*180/20)+\xs+0.5},{\b*sin(7*180/20)+\ys}) {};

\node [label={[label distance=\y1 cm]\z1: },circle,fill=blue,draw=black,scale=\x1](24) at ({\a*cos(10*180/20)+\xs},{\b*sin(10*180/20)+\ys}) {};

\node [label={[label distance=\y1 cm]\z1: },circle,fill=blue,draw=black,scale=\x1](25) at ({\a*cos(13*180/20)+\xs},{\b*sin(13*180/20)+\ys}) {};
\node [label={[label distance=\y1 cm]\z1: },circle,fill=blue,draw=black,scale=\x1](26) at ({\a*cos(16*180/20)+\xs},{\b*sin(16*180/20)+\ys}) {};

\node [label={[label distance=\y1 cm]\z1: },circle,fill=blue,draw=black,scale=\x1](27) at ({\a*cos(24*180/20)+\xs},{\b*sin(24*180/20)+\ys}) {};

\node [label={[label distance=\y1 cm]200: $1\gamma$ },circle,fill=blue,draw=black,scale=\x1](28) at ({\a*cos(27*180/20)+\xs},{\b*sin(27*180/20)+\ys}) {};

\node [label={[label distance=\y1 cm]340: $1\beta$ },circle,fill=blue,draw=black,scale=\x1](29) at ({\a*cos(30*180/20)+\xs},{\b*sin(30*180/20)+\ys}) {};
\node [label={[label distance=\y1 cm]\z1: },circle,fill=blue,draw=black,scale=\x1](30) at ({\a*cos(33*180/20)+\xs},{\b*sin(33*180/20)+\ys}) {};
\node [label={[label distance=\y1 cm]\z1: },circle,fill=blue,draw=black,scale=\x1](31) at ({\a*cos(36*180/20)+\xs},{\b*sin(36*180/20)+\ys}) {};
\node [label={[label distance=\y1 cm]270: $1\alpha$ },circle,fill=blue,draw=black,scale=\x1](32) at ({\a*cos(30*180/20)+\xs},{\b*sin(30*180/20)+\yss}) {};

\draw[domain=16*180/20:30*180/20, smooth, variable=\x, red,line width={\w1}] plot ({\a*cos(\x)+2*\xs}, {\b*sin(\x)+\ys});
\draw[domain=33*180/20:53*180/20, smooth, variable=\x, red,line width={\w1}] plot ({\a*cos(\x)+2*\xs}, {\b*sin(\x)+\ys});

\draw[domain=30*180/20:33*180/20,dotted, smooth, variable=\x, red,line width={\w1}] plot ({\a*cos(\x)+2*\xs}, {\b*sin(\x)+\ys});
\draw[domain=13*180/20:16*180/20,dotted, smooth, variable=\x, red,line width={\w1}] plot ({\a*cos(\x)+2*\xs}, {\b*sin(\x)+\ys});

\node [label={[label distance=\y1 cm]\z1: },circle,fill=blue,draw=black,scale=\x1](44) at ({\a*cos(4*180/20)+2*\xs},{\b*sin(4*180/20)+\ys}) {};
\node [label={[label distance=\y1 cm]\z1: },circle,fill=blue,draw=black,scale=\x1](45) at ({\a*cos(7*180/20)+2*\xs},{\b*sin(7*180/20)+\ys}) {};
\node [label={[label distance=\y1 cm]\z1: },circle,fill=blue,draw=black,scale=\x1](46) at ({\a*cos(10*180/20)+2*\xs},{\b*sin(10*180/20)+\ys}) {};
\node [label={[label distance=\y1 cm]\z1: },circle,fill=blue,draw=black,scale=\x1](47) at ({\a*cos(13*180/20)+2*\xs},{\b*sin(13*180/20)+\ys}) {};
\node [label={[label distance=\y1 cm]\z1: },circle,fill=blue,draw=black,scale=\x1](48) at ({\a*cos(16*180/20)+2*\xs},{\b*sin(16*180/20)+\ys}) {};

\node [label={[label distance=\y1 cm]\z1: },scale=\x1](47a) at ({\a*cos(16*180/20)+3*\xs-0.5},{\b*sin(13*180/20)+\ys}) {};
\node [label={[label distance=\y1 cm]\z1: },scale=\x1](48a) at ({\a*cos(16*180/20)+3*\xs-0.5},{\b*sin(16*180/20)+\ys}) {};
\node [label={[label distance=\y1 cm]\z1: },circle,fill=blue,draw=black,scale=\x1](49) at ({\a*cos(24*180/20)+2*\xs},{\b*sin(24*180/20)+\ys}) {};
\node [label={[label distance=\y1 cm]\z1: },circle,fill=blue,draw=black,scale=\x1](50) at ({\a*cos(27*180/20)+2*\xs},{\b*sin(27*180/20)+\ys}) {};
\node [label={[label distance=\y1 cm]200: $2\beta$ },circle,fill=blue,draw=black,scale=\x1](51) at ({\a*cos(30*180/20)+2*\xs},{\b*sin(30*180/20)+\ys}) {};
\node [label={[label distance=\y1 cm]340: $2\gamma$ },circle,fill=blue,draw=black,scale=\x1](52) at ({\a*cos(33*180/20)+2*\xs},{\b*sin(33*180/20)+\ys}) {};
\node [label={[label distance=\y1 cm]\z1: },circle,fill=blue,draw=black,scale=\x1](53) at ({\a*cos(36*180/20)+2*\xs},{\b*sin(36*180/20)+\ys}) {};
\node [label={[label distance=\y1 cm]270: $2\alpha$ },circle,fill=blue,draw=black,scale=\x1](54) at ({\a*cos(30*180/20)+2*\xs},{\b*sin(30*180/20)+\yss}) {};

\draw[domain=30*180/20:44*180/20, smooth, variable=\x, red,line width={\w1}] plot ({\a*cos(\x)+3*\xs}, {\b*sin(\x)+\ys});
\draw[domain=7*180/20:27*180/20, smooth, variable=\x, red,line width={\w1}] plot ({\a*cos(\x)+3*\xs}, {\b*sin(\x)+\ys});

\draw[domain=27*180/20:30*180/20, dotted, smooth, variable=\x, red,line width={\w1}] plot ({\a*cos(\x)+3*\xs}, {\b*sin(\x)+\ys});
\draw[domain=4*180/20:7*180/20,dotted, smooth, variable=\x, red,line width={\w1}] plot ({\a*cos(\x)+3*\xs}, {\b*sin(\x)+\ys});

\node [label={[label distance=\y1 cm]\z1: },circle,fill=blue,draw=black,scale=\x1](55) at ({\a*cos(4*180/20)+3*\xs},{\b*sin(4*180/20)+\ys}) {};
\node [label={[label distance=\y1 cm]\z1: },scale=\x1](55a) at ({\a*cos(4*180/20)+3*\xs+0.5},{\b*sin(4*180/20)+\ys}) {};

\node [label={[label distance=\y1 cm]\z1: },circle,fill=blue,draw=black,scale=\x1](56) at ({\a*cos(7*180/20)+3*\xs},{\b*sin(7*180/20)+\ys}) {};
\node [label={[label distance=\y1 cm]\z1: },scale=\x1](56a) at ({\a*cos(4*180/20)+3*\xs+0.5},{\b*sin(7*180/20)+\ys}) {};

\node [label={[label distance=\y1 cm]\z1: },circle,fill=blue,draw=black,scale=\x1](57) at ({\a*cos(10*180/20)+3*\xs},{\b*sin(10*180/20)+\ys}) {};
\node [label={[label distance=\y1 cm]\z1: },circle,fill=blue,draw=black,scale=\x1](58) at ({\a*cos(13*180/20)+3*\xs},{\b*sin(13*180/20)+\ys}) {};
\node [label={[label distance=\y1 cm]\z1: },circle,fill=blue,draw=black,scale=\x1](59) at ({\a*cos(16*180/20)+3*\xs},{\b*sin(16*180/20)+\ys}) {};
\node [label={[label distance=\y1 cm]\z1: },circle,fill=blue,draw=black,scale=\x1](60) at ({\a*cos(24*180/20)+3*\xs},{\b*sin(24*180/20)+\ys}) {};
\node [label={[label distance=\y1 cm]200: $3\gamma$ },circle,fill=blue,draw=black,scale=\x1](61) at ({\a*cos(27*180/20)+3*\xs},{\b*sin(27*180/20)+\ys}) {};
\node [label={[label distance=\y1 cm]340: $3\beta$ },circle,fill=blue,draw=black,scale=\x1](62) at ({\a*cos(30*180/20)+3*\xs},{\b*sin(30*180/20)+\ys}) {};
\node [label={[label distance=\y1 cm]\z1: },circle,fill=blue,draw=black,scale=\x1](63) at ({\a*cos(33*180/20)+3*\xs},{\b*sin(33*180/20)+\ys}) {};
\node [label={[label distance=\y1 cm]\z1: },circle,fill=blue,draw=black,scale=\x1](64) at ({\a*cos(36*180/20)+3*\xs},{\b*sin(36*180/20)+\ys}) {};
\node [label={[label distance=\y1 cm]270: $3\alpha$ },circle,fill=blue,draw=black,scale=\x1](65) at ({\a*cos(30*180/20)+3*\xs},{\b*sin(30*180/20)+\yss}) {};

\node [label={[label distance=\y1 cm]90: $0a\Pi^a_{n-2}$},scale=\x1](a) at ({0-1*\xs},0+\b) {};
\node [label={[label distance=\y1 cm]90: $0\Pi^a_{n-1}$},scale=\x1](a) at (0,0+\b) {};
\node [label={[label distance=\y1 cm]90: $1\Pi^a_{n-1}$},scale=\x1](a) at ({0+1*\xs},0+\b) {};
\node [label={[label distance=\y1 cm]90: $2\Pi^a_{n-1}$},scale=\x1](a) at ({0+2*\xs},0+\b) {};
\node [label={[label distance=\y1 cm]90: $3\Pi^a_{n-1}$},scale=\x1](a) at ({0+3*\xs},0+\b) {};

\node [label={[label distance=\y1 cm]90: $\cdots$},scale=\x1](a) at ({(\a*cos(4*180/20)+\xs+0.5+\a*cos(16*180/20)+3*\xs-0.5)/2+5},{(\b*sin(4*180/20)+\ys+\b*sin(16*180/20)+\ys)/2}) {};


\draw [line width={\w1},red] (01)--(15) (02)--(14) (18)--(21)--(32)--(29) (19)--(28) (22)--(48) (23)--(47)  (51)--(54)--(65)--(62)  (61)--(52) (56)--(56a) (55)--(55a); 
\end{tikzpicture}  
\caption{Construction of a Hamiltonian cycle.} \label{fig:Construction of Hamiltonian cycle}
\end{figure}
\end{proof}

\section{Concluding remarks}

In this paper we have introduced and studied a family of graphs generalizing,
on one hand, the Fibonacci and Lucas cubes, and, on the other hand, the Pell
graphs introduced recently by Munarini \cite{Munarini}. Their name, the
metallic cubes, reflects the fact that they are 
induced subgraphs of hypercubes and that their number of vertices $v_n$
satisfies two-term recurrences, $v_n = a v_{n-1} + v_{n-2}$, whose (larger)
characteristic roots are known as the metallic means. We have investigated
their basic structural, enumerative and metric properties and settled some
Hamiltonicity-related questions. Our results show that the new generalization
preserves many interesting properties of the Fibonacci, Lucas, and Pell cubes, 
indicating thusly their potential applicability in all related settings.
Our results, however, are far from presenting a comprehensive portrait of
the new family. In order to keep this contribution at a reasonable length,
we have opted for presenting just the basic results, without following all
directions they open. For example, our results on degree distribution open
the possibility to compute many degree-based topological invariants, while
our results on distances do the same for distance-based invariants such as,
e.g., the Wiener and the Mostar index \cite{klavmoll,klavmoll1,Doslic-JMC-19}.
Our results on degrees could be further developed by looking at irregularity,
in the way ref. \cite{taran} does for Pell graphs.
Similarly, apart from a short comment
about perfect and semi-perfect matchings, we have not exploited our
decomposition results to investigate any of matching-, independence- and 
dominance-related problems that have been studied for hypercubes and for
Fibonacci and Lucas cubes. Work on some of the mentioned problems is under
way, and we plan to report on it in subsequent papers.

\section*{Acknowledgments}
Partial support of Slovenian ARRS (Grant no. J1-3002) is gratefully
acknowledged by T. Do\v{s}li\'c.

\bibliographystyle{amsplain}
\bibliography{}

\end{document}